\newcommand{\gismo}{{\fontfamily{phv}\fontshape{sc}\selectfont G\pmb{+}Smo}\xspace}
\newcommand{\tripplenorm}[1]{{\left\vert\kern-0.25ex\left\vert\kern-0.25ex\left\vert #1 
    \right\vert\kern-0.25ex\right\vert\kern-0.25ex\right\vert}}
\numberwithin{equation}{section}
    \newcommand{\keywords}[1]{\par\addvspace\baselineskip  
     \noindent\keywordname\enspace\ignorespaces#1}
\date{}
\begin{document}
\mainmatter  
\title{SPACE-TIME ISOGEOMETRIC ANALYSIS OF PARABOLIC EVOLUTION EQUATIONS}
\titlerunning{Space-Time IgA of Parabolic Evolution Equations}

\author{Ulrich Langer$^1$, Stephen E. Moore$^1$ and Martin Neum\"uller$^2$}
\authorrunning{U.~Langer, S.~E.~Moore, M.~Neum\"uller}
\institute{ $^1$ 
Johann Radon Institute for Computational and Applied Mathematics,\\ 
		  Austrian Academy of Sciences, 
		  Altenberger Str. 69, 4040 Linz, Austria.\\
		  $^2$ Institute of Computational Mathematics, Johannes Kepler University,\\
Altenberger Str. 69, 4040 Linz, Austria.\\
\mailsa ,   \mailsc 
 }
\maketitle

\begin{abstract}
We present and analyze a new stable space-time Isogeometric Analysis (IgA) method for
the numerical solution of parabolic evolution equations 
in fixed and moving spatial computational domains. 
The discrete bilinear form is elliptic on the IgA space 
with respect to a discrete energy norm. This property together with a corresponding
boundedness property, consistency and approximation results for the IgA spaces
yields an a priori discretization error estimate with respect to the 
discrete norm. The theoretical results are confirmed by several numerical 
experiments with low- and high-order IgA spaces.
\end{abstract}

\keywords{
parabolic initial-boundary value problems, space-time isogeometric analysis, 
fixed and moving spatial computational domains, a priori discretization error estimates
}

\section{Introduction}
\label{Sec:Introduction}

Let us consider the parabolic initial-boundary value problem: 
find $u : \overline{Q} \rightarrow \mathbb{R}$ such that
%
\begin{equation}
\label{eqn:ModelProblem}
  \begin{aligned}
  \partial_{t} u(x,t) - \Delta u(x,t) & = f(x,t)\quad \text{for} \, (x,t) \in Q := \Omega \times (0,T),\\
  u(x,t)  & = 0       \quad \, \qquad \text{for} \, (x,t) \in \Sigma := \partial \Omega \times (0,T),   \\
  u(x,0) & = u_{0}(x) \, \quad \, \text{for}\, x \in  \overline{\Omega},
 \end{aligned} 
\end{equation}
%
as the 
typical
model problem for a linear parabolic evolution equation posed 
in the space-time cylinder $ \overline{Q} = \overline{\Omega} \times [0,T]$, 
where $\partial_{t}$ denotes the partial time derivative, $\Delta$ 
is the Laplace operator, $f$ is a given source function, $u_0$ are 
the given initial data, $T$ is the final time, and 
$\Omega \subset \mathbb{R}^{d}$ $(d = 1,2,3)$ denotes the spatial 
computational domain with the boundary $\partial \Omega$.
For the time being, we assume that the domain $\Omega$ is fixed, bounded 
and Lipschitz. In many practical, in particular, industrial applications, 
the domain $\Omega$, 
that is also called physical domain, is usually generated by 
some CAD system, i.e., it can be represented by a single patch or 
multiple patches which are images of the parameter domain 
$(0,1)^d$ by  spline or NURBS maps.

The standard discretization methods in time and space are based on
time-stepping methods combined with some spatial 
discretization technique like the Finite Element Method (FEM)
\cite{LMN:KnabnerAngerman:2003a,LMN:Thomee:2006a,LMN:Lang:2000a}.
The vertical method of lines discretizes first in time and then in space \cite{LMN:Thomee:2006a},
whereas in the horizontal method of lines, also called Rothe's method, 
the discretization starts with respect to (wrt) the time variable \cite{LMN:Lang:2000a}.
The later method has some advantages wrt the development of adaptive techniques.
However, in both approaches, the development of really efficient adaptive techniques 
suffers from the separation of the time and the space discretizations.
Moreover, this separation is even more problematic in parallel computing.
The curse of sequentiality of time affects the construction of efficient parallel 
method and their implementation of massively parallel computers with several 
thousands or even hundreds of  thousands  of cores in a very bad way.

The simplest ideas for space-time solvers are based on
time-parallel integration techniques for ordinary differential equations 
that have a long history, see  
\cite{LMN:Gander:2015a} 
for a comprehensive presentation of this history.
The most popular parallel time integration method
is the parareal method that was introduced by 
Lions, Maday and Turinici in 
\cite{LMN:LionsMadayTurinici:2001a}. 
Time-parallel multigrid methods have also a long history.
In 1984, Hackbusch proposed  the so-called parabolic multigrid 
method that allows a simultaneous execution on a set 
of successive time steps 
\cite{LMN:Hackbusch:1984a}.
Lubich and Ostermann 
\cite{LMN:LubichOstermann:1987a}
introduced
parallel multigrid wavefrom relaxation methods for parabolic problems. 
A comprehensive presentation of these methods and a survey 
of the references until 1993 can be found in the monograph
\cite{LMN:Vandewalle:1993a}.
Vandewalle and Horton investigate the convergence behavior of these
time-parallel multigrid methods by means of Fourier mode analysis
\cite{LMN:HortonVandewalle:1995a}. 
Deshpande et al. provided a rigorous analysis of time domain parallelism
\cite{LMN:DeshpandeMalhotraSchultzDouglas:1995a}.
Very recently, Gander and Neum\"uller have also used the Fourier analysis 
to construct perfectly scaling parallel space-time multigrid methods 
for solving initial value problems for ordinary differential equations
\cite{LMN:GanderNeumueller:2014a} 
and 
initial-boundary value problems for parabolic PDEs 
\cite{LMN:GanderNeumueller:2014b}.
In these two papers, the authors construct stable high-order dG discretizations 
in time slices. In \cite{LMN:Neumueller:2013a} this technique is used to solve the arising linear system of a
space-time dG discretization, which is also stable 
in the case of the decomposition of the space-time cylinder into 
4d simplices (pentatops) for 3d spatial computational domains. This idea
opens great opportunities for flexible discretizations, 
adaptivity and the treatment of changing spatial domains in time
\cite{LMN:NeumuellerSteinbach:2011a,LMN:NeumuellerSteinbach:2013a,LMN:Neumueller:2013a,LMN:KarabelasNeumueller:2015a}.
We also refer to 
\cite{LMN:VegtVen:2002a,LMN:Andreev:2013a,LMN:BankMetti:2013a,LMN:Tezduyar1992:I,LMN:Tezduyar1992:II,LMN:Tezduyar2004,LMN:Tezduyar2006,LMN:Masud1997,LMN:Behr2008,LMN:Lehrenfeld2015}
where different space-time techniques have been developed.
One class of special space-time methods, 
that 
is very relevant to single-patch space-time IgA, 
which we are going to consider in this paper,
is the multiharmonic  or harmonic balanced FEM
that was first used for solving non-linear, time-harmonic eddy current problems 
by electrical engineers, see, e.g., 
\cite{LMN:YamadaBessho:1988}.
Lately the multiharmonic FEM has been applied to 
parabolic and eddy current time-periodic boundary value problems 
as well as to the corresponding optimal control problems, see
\cite{LMN:BachingerLangerSchoeberl:2005a,LMN:KollmannKolmbauerLangerWolfmayrZulehner:2013a,LMN:KolmbauerLanger:2012d,LMN:LangerRepinWolfmayr:2015a}.
Babuska and Janik  already developed $h-p$ versions of the finite element method in space
with $p$ and $h-p$ approximations in time for parabolic 
initial boundary value problems in the papers 
\cite{LMN:BabuskaJanik:1989}
and 
\cite{LMN:BabuskaJanik:1990},
respectively.
In \cite{LMN:SchwabStevenson:2009a}, Schwab and Stevenson 
have recently developed and analyzed space--time adaptive wavelet methods 
for parabolic evolution problems, see also \cite{LMN:CheginiStevenson:2011}. 
Similarly, Mollet proved uniform stability of an abstract 
Petrov-Galerkin discretizations of boundedly invertible operators
and applied this result to space-time discretizations
of linear parabolic problems \cite{LMN:Mollet:2014a}.
Very recently Urban and Patera 
have proved error bounds for reduced basis approximation
to linear parabolic problems \cite{LMN:UrbanPatera:2014a},
whereas Steinbach has investigated conform space-time finite element 
approximations to parabolic problems \cite{LMN:Steinbach:2015}.
We here also mention the papers by Olshanskii and Reusken who have developed
Eulerian space-time finite element method for diffusion problems on evolving surfaces 
\cite{LMN:OlshanskiiReuskenXu:2014a,LMN:OlshanskiiReusken:2014a}.
Our approach uses special time-upwind test functions which are motivated by a space-time streamline diffusion method \cite{LMN:Hansbo:1994,LMN:Johnson:2009,LMN:JohnsonNaevertPitkaeranta:1984,LMN:JohnsonSaranen:1986}
and by a similar approach used in \cite{LMN:BankVassilevski:2014a}.

The increasing interest in highly time-parallel space-time methods 
is certainly connected with the fact that 
parallel computers have rapidly developed with respect to
number of cores, computation speed, memory, availability etc,
but also with the complexity of the problems the people want to solve.
In particular, the optimization of products and processes on the basis 
of computer simulations of the underlying transient processes 
(PDE constrains) foster the development of space-time methods
since the optimality system is basically nothing but  a system of primal and adjoint PDEs 
which are coupled forward and backward in time, see, e.g.,
\cite{LMN:Troeltzsch:2010a}.
To the best of our knowledge, we are not aware of any paper 
on space-time IgA for evolution equations.

In this paper, we present a stable discrete space-time variational formulation for 
the parabolic initial-boundary value problems of the form 
\eqref{eqn:ModelProblem} in the sense that the discrete bilinear 
form $a_h(\cdot,\cdot): V_{0h} \times V_{0h}\rightarrow \mathbb{R}$ 
is elliptic on the IgA space $V_{0h}$ wrt a special discrete norm $\|\cdot\|_h$.
For simplicity, we consider the single-patch case where the space-time cylinder $Q$, 
that is called physical domain,
can be represented by one smooth,
uniformly regular
 spline or NURBS map $\Phi$ of the 
parameter domain ${\widehat Q} = (0,1)^{d+1}$. 
In IgA, that was introduced by Hughes, Cottrell and Bazilevs
in 2005 \cite{LMN:HughesCottrellBazilevs:2005a},
we use the same basis functions for both representing the approximate solution 
and defining the geometrical mapping $\Phi$.
Approximation, stability and error estimates for $h$-refined IgA meshes 
of spatial computational domains can be found in
\cite{LMN:BazilevsBeiraoCottrellHughesSangalli:2006a},
see also the monograph \cite{LMN:CottrellHughesBazilevs:2009a} 
for a comprehensive presentation of the IgA and its mathematical analysis.
Using these approximation results for B-splines and NURBS, 
the $V_{0h}$-ellipticity of the discrete bilinear form $a_h(\cdot,\cdot)$ 
wrt the discrete norm $\|\cdot\|_h$, an appropriate boundedness result
and the consistency, we derive asymptotically optimal discretization 
error estimate in the discrete norm $\|\cdot\|_h$.
Furthermore, we consider moving spatial computational domains 
$\Omega(t) \subset \mathbb{R}^{d}$, $t \in [0,T]$, 
which
always lead to a fix space-time cylinder
$Q = \{ (x,t) \in \mathbb{R}^{d+1}: x \in \Omega(t), t \in (0,T)\} \subset \mathbb{R}^{d+1}$
that can easily be  discretized by IgA.
We again derive a stable space-time IgA scheme and prove 
asymptotically optimal discretization 
error estimate 
in a similar discrete norm $\|\cdot\|_{h,m}$.
Finally, we present a series of numerical experiments for
fixed and moving spatial computational domains 
that support our theoretical results. 
Since the discrete bilinear form is $V_{0h}$-elliptic, 
one may expect that multigrid methods can efficiently 
solve the resulting space-time system of algebraic equation.
Indeed, this is the case as one of our experiments 
with the standard AMG code hypre shows in Subsection~\ref{subsec:ParallelSolution},
where we solve a linear system with 
1.076.890.625
space-time unknowns (dofs) within 156 seconds 
using 16.384 cores. This results demonstrates the great 
potential of our discrete space-time formulation 
for the implementation on massively parallel computers.

The remainder of the paper is organized as follows: 
In Section~\ref{Sec:SpaceTimeVariationalFormulation}, 
we recall the standard space-time variational formulations.
Section~\ref{Sec:StableSpaceTimeIgADiscretizations} is devoted to
the derivation of a stable space-time IgA discretization 
of our parabolic initial-boundary value problem.
In Section~\ref{DiscretizationErrorEstimates}, we derive our 
a priori discretization error estimate.
Section~\ref{Sec:MovingSpatialComputationalDomain} 
deals with the case of moving spatial computational domains.
Our numerical results for fixed and moving spatial domains
are presented and discussed in Section~\ref{NumericalResults}.
Finally, we draw some conclusions in Section~\ref{Conclusions}.


\section{Space-Time Variational Formulations}
\label{Sec:SpaceTimeVariationalFormulation}

Let us first introduce the Sobolev spaces
%
%
$H^{l,k}(Q) =\{ u \in L_2(Q):\, \partial^{\alpha}_x u \in L_2(Q), 
\forall \alpha \,\mbox{with}\, 0 \leq |\alpha| \leq l,\,\partial_t^i u \in L_2(Q), i=0,\ldots,k\}$
of functions defined in the space-time cylinder $Q$, 
where $L_2(Q)$ denotes the space of square-integrable functions, 
$\alpha=(\alpha_1,...,\alpha_d)$ is a multi-index with non-negative integers 
$\alpha_1,...,\alpha_d$, 
$|\alpha| = \alpha_1+\ldots+\alpha_d$, 
$\partial_x^{\alpha} u := \partial^{|\alpha|} u/\partial x^{\alpha} =
\partial^{|\alpha|} u / \partial x_1^{\alpha_1} \ldots \partial x_d^{\alpha_d}$
and 
$\partial_t^i u := \partial^i u/\partial t^i$, see, e.g.,
\cite{LMN:Ladyzhenskaya:1973a,LMN:LadyzhenskayaSolonnikovUralceva:1967a}.
Furthermore, we need the spaces 
$H^{1,0}_{0}(Q) =  \{u \in L_2(Q):\nabla_x u \in [L_2(Q)]^d, u = 0 \, \text{on} \, \Sigma \}$ and 
$H^{1,1}_{0,{\overline 0}}(Q) = \{u \in L_2(Q):\nabla_x u \in [L_2(Q)]^d, \partial_t u \in L_2(Q), u = 0 \, \text{on}\, \Sigma, \, \text{and} \, u = 0\, \text{on} \, \Sigma_T\}$
for introducing the weak space-time formulation of \eqref{eqn:ModelProblem},
where $ \Sigma_{T} := \Omega \times \{T\}$, and 
$\nabla_x u = (\partial u / \partial x_1,\ldots,\partial u / \partial x_d)^\top$
denotes the gradient with respect to the spatial variables.

The standard weak space-time variational formulation of \eqref{eqn:ModelProblem} reads 
as follows: find $u \in H^{1,0}_{0}(Q)$  such that 
\begin{equation}
 \label{eqn:VariationalFormulation}
  a(u,v)  = l(v) \quad \forall v \in  H^{1,1}_{0,{\overline 0}}(Q), 
\end{equation}
with the bilinear form 
\begin{equation}
 \label{eqn:Sec:BilinearForm}
 a(u,v) = - \int_{Q}  u(x,t)\partial_{t} v(x,t) dxdt  + \int_{Q}\nabla_x u(x,t) \cdot \nabla_x v(x,t) dxdt 
\end{equation}
and the linear form
\begin{equation}
 \label{eqn:Sec:LinearForm}
 l(v)  = \int_{Q} f(x,t) v(x,t) dx dt + \int_{\Omega} u_0(x) v(x,0) dx.
\end{equation}
The variational problem \eqref{eqn:VariationalFormulation} is known to have a 
unique weak solution \cite{LMN:Ladyzhenskaya:1973a,LMN:LadyzhenskayaSolonnikovUralceva:1967a}.
In the later books, more general parabolic initial-boundary value problems, 
including other boundary conditions and more general elliptic parts, 
and non-linear  versions
are studied. Moreover, beside existence and uniqueness results, the reader 
also finds useful a priori estimates and regularity results, see also  
\cite{LMN:Troeltzsch:2010a}.
We here mention that time-stepping methods rather based on
line variational formulations which are formulated in 
function spaces of abstract functions mapping the time interval $(0,T)$
into some Sobolev space of functions living on $\Omega$,
see, e.g., 
\cite{LMN:Wloka:1982a,LMN:Zeidler:1990a},
\cite{LMN:Thomee:2006a}, 
and \cite{LMN:Troeltzsch:2010a} 
for the connection of these different formulations.


\section{Stable Space-Time IgA Discretizations}
\label{Sec:StableSpaceTimeIgADiscretizations}
%

In  this section, we briefly recall the definitions of B-splines and NURBS 
basis functions and their use for both the geometrical representation of the  
space-time cylinder $Q$ and the construction of the IgA trial spaces
where we look for 
approximate solutions to our 
variational parabolic evolution problem \eqref{eqn:VariationalFormulation}.
For more details on B-splines and NURBS-based IgA,
we refer to the monograph 
\cite{LMN:CottrellHughesBazilevs:2009a}.
Then we derive our stable space-time IgA scheme that uniquely defines 
an approximate solution $u_h$ in the IgA space. 
This approximate solution can be defined by solving 
one linear system of algebraic equations 
the solution of which is nothing but the vector $\underline{u}_h$ 
of control points for $u_h$.


\subsection{B-splines and NURBS}
\label{Subsec:BSplinesandNURBS}
\begin{figure}[th!]
\begin{center}
  \includegraphics[width=0.45\textwidth]{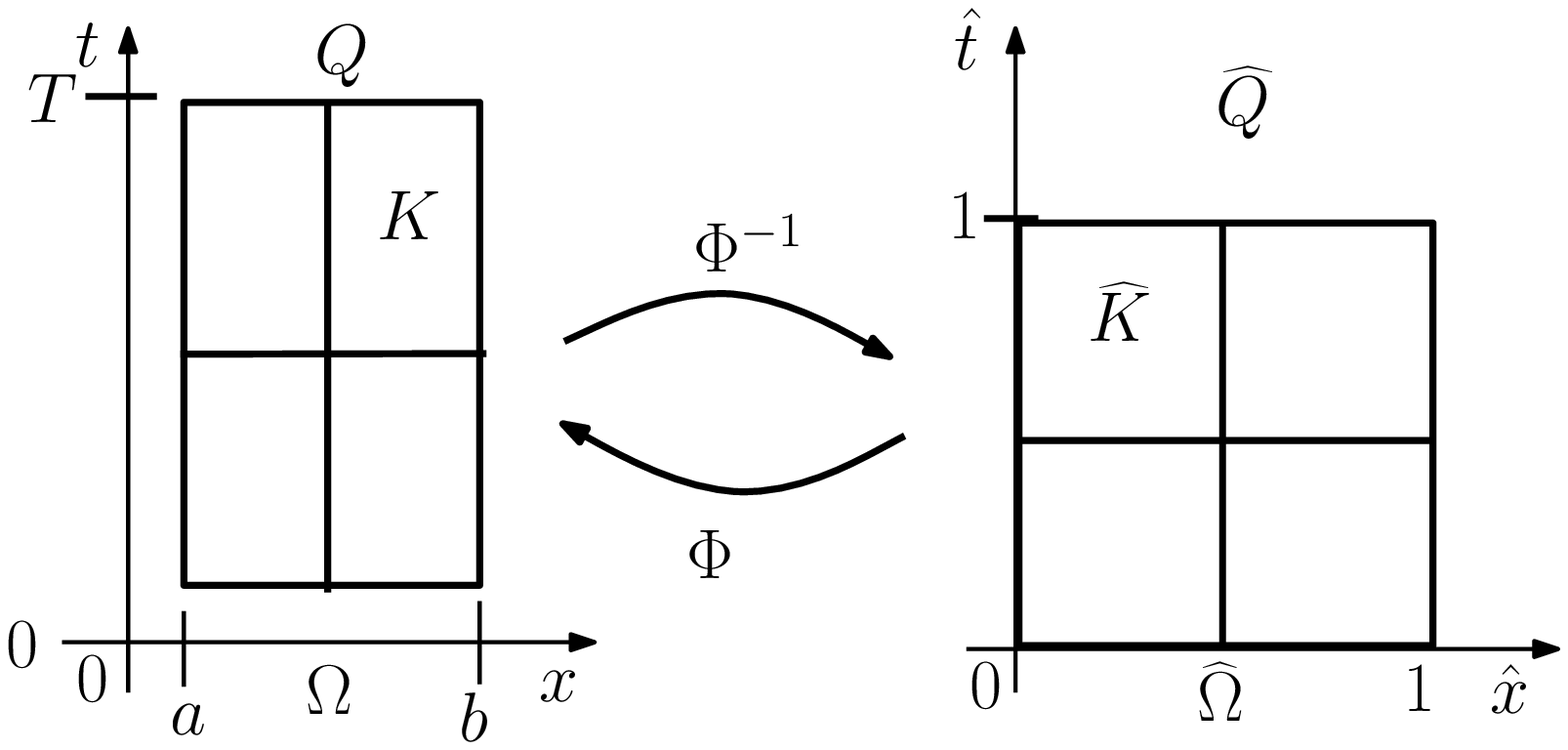}
  \hspace{8.00mm}
  \includegraphics[width=0.46\textwidth]{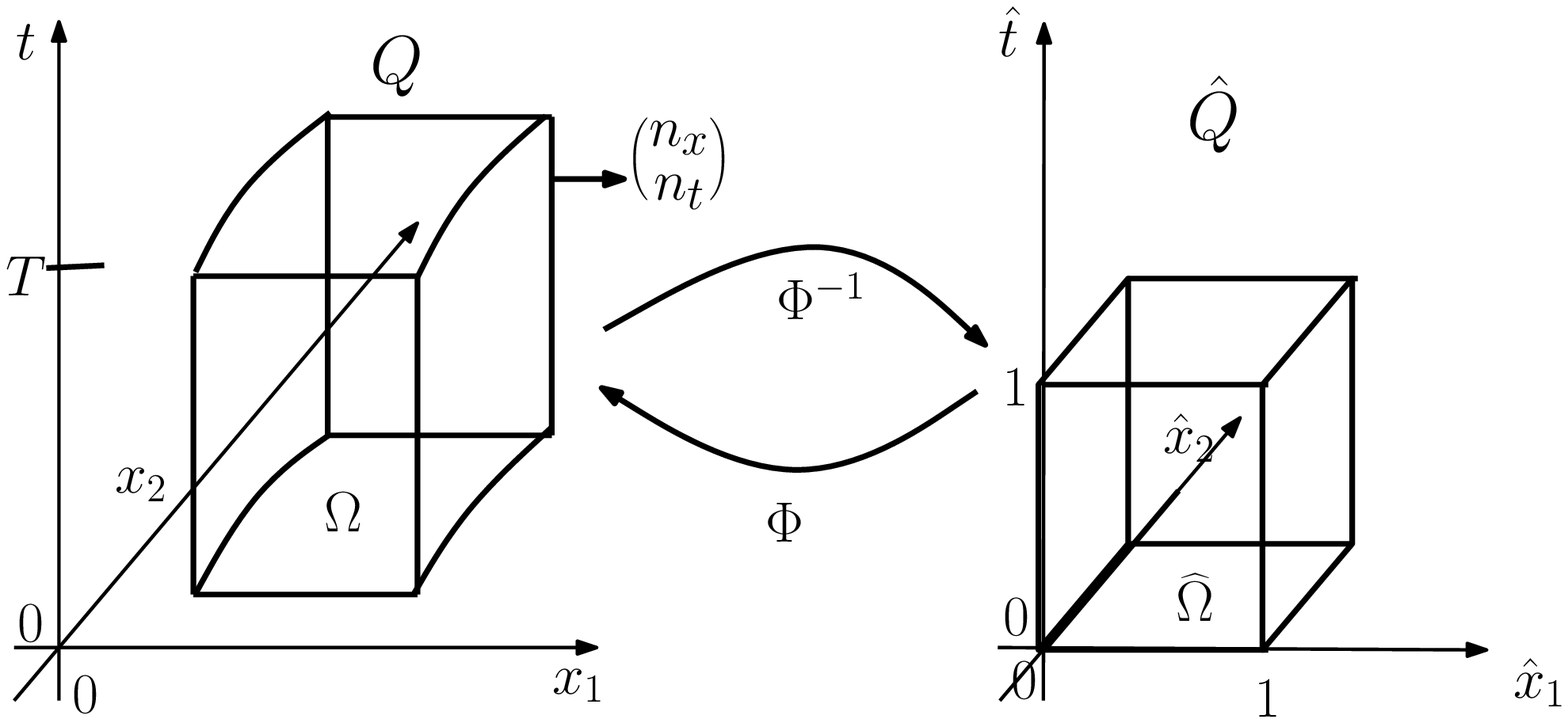}
   \caption{Space-time IgA paraphernalia for $Q \subset \mathbb{R}^{d+1}$ with $d=1$ (left) 
	    and $d=2$ (right).} 
	    \label{Fig:Sec:SpaceTimeMapping}
\end{center}
\end{figure}
Let $p \ge 1$ denote the polynomial degree and $n$ the number of basis functions
defining the B-spline curve.
Then a knot vector is nothing but a set of non-decreasing sequence of real numbers in the parameter 
domain and can be written as  $  \mathrm{\Xi} = \left \{\xi_{1},\ldots,\xi_{n+p+1}\right\}$. 
By convention, we assume that $\xi_1=0$ and $\xi_{n+p+1}=1$.
We also note that the knot value may be repeated indicating its multiplicity $m$.
We always consider  open knot vectors, i.e., knot vectors where the first 
and last knot values appear $p+1$ times, or, in other word, the multiplicity $m$ 
of the first and last knots is $p+1$.
For the one-dimensional parametric domain $(0,1)$,
there is an underlying mesh $\widehat{\mathcal{K}}_h$
consisting of elements $\widehat{K}$ created by the distinct knots. 
We denote the global mesh size by $\hat{h} :=\max\{\hat{h}_K:\widehat{K} \in \widehat{\mathcal{K}}_h \}$,
where $\hat{h}_K:=diam(\widehat{K}) = \mbox{length}(\widehat{K}).$ 
For the time being, we assume that the ratio of the sizes of neighboring 
elements is uniformly bounded from above and below.
In this case, we speak about locally quasi-uniform meshes.

%
%

The univariate B-spline basis functions $\widehat{B}_{i,p} : (0,1) \rightarrow \mathbb{R}$ 
are recursively defined  by means of the Cox-de Boor recursion formula 
as follows:
\begin{align}
 \widehat{B}_{i,0}(\xi)  & = \left\{
   \begin{aligned}
     & 1 & \text{if} \quad & \xi_{i} \leq \xi < \xi_{i+1},\\
     & 0 & \text{else}, &\\
   \end{aligned}
    \right. \\
     \widehat{B}_{i,p}(\xi)  & = \frac{\xi-\xi_{i}}{\xi_{i+p}
     - \xi_{i}}\widehat{B}_{i,p-1}(\xi)
     +\frac{\xi_{i+p+1}-\xi}{\xi_{i+p+1}-\xi_{i+1}}\widehat{B}_{i+1,p-1}(\xi),
\end{align}
where a division by zero is defined to be zero. 
We note that a basis function of degree $p$ is $(p-m)$ times continuously differentiable
across a knot with the multiplicity $m$. If all internal knots have the multiplicity $1$,
then B-splines of degree $p$ are globally $C^{p-1}-$continuous.
In all numerical examples presented in Section~\ref{NumericalResults}, we exactly consider this case.

%

Now we define the multivariate  B-spline 
basis functions on the 
space-time parameter domain 
$\widehat{Q} :=(0,1)^{d+1} \subset \mathbb{R}^{d+1}$ with $d=1,2,$ or $3$ 
as product of the corresponding univariate  B-spline 
basis functions.
To do this, we first define the knot vectors 
$\mathrm{\Xi}_{\alpha} = \left \{\xi^{\alpha}_{1},\ldots,\xi^{\alpha}_{n_{\alpha}+p_{\alpha}+1}\right\}$ 
for every space-time direction $\alpha = 1,\ldots,d+1$.
Furthermore, we introduce the  multi-indicies  ${i}:= (i_1,\ldots,i_{d+1})$ 
and ${p}:= (p_1,\ldots,p_{d+1}),$ the set 
$\overline{\mathcal{I}}=\{{i} = (i_1,\ldots,i_{d+1}) : i_\alpha = 1,2, \ldots, n_\alpha; \;   \alpha = 1,2,  \ldots, d+1\} $.
The product of the univariate  B-spline basis functions 
now gives   multivariate  B-spline basis functions
\begin{equation}
\label{eqn:multivariatebspline}
   \widehat{B}_{{i},{p}}({\xi}) := \prod\limits_{\alpha=1}^{d+1}\widehat{B}_{i_\alpha,p_\alpha}(\xi_{\alpha}),
\end{equation}
where 
$\xi = (\xi_{1},\ldots,\xi_{d+1}) \in \widehat{Q}$.
The univariate and multivariate NURBS basis functions are defined in the parametric domain by means of the corresponding
B-spline basis functions $\{ \widehat{B}_{{i},{p}} \}_{{i} \in \overline{\mathcal{I}}}.$ 
For given $p = (p_1,\ldots,p_{d+1})$ and for all  $i  \in  \overline{\mathcal{I}}$, 
we define the NURBS basis function $\widehat{R}_{{i},{p}}$  as follows:
\begin{align}
\label{eqn:nurbbasisfunction}
   \widehat{R}_{{i},{p}} : (0,1) \rightarrow \mathbb{R},  \quad  \widehat{R}_{{i},{p}}({\xi}) := 
   & \dfrac{\widehat{B}_{{i},{p}}({\xi})w_{{i}}}
   {W({\xi}) } 
\end{align}
with the weighting function
\begin{align}
\label{eqn:nurbsweightfunction}
   W : (0,1) \rightarrow \mathbb{R}, \quad  W({\xi}) := \sum_{\mathbf{i} \in  \overline{\mathcal{I}}} w_{{i}} \widehat{B}_{{i},{p}}({\xi}),
\end{align}
where the weights $w_{{i}}$ are positive real numbers.

The physical  space-time computational domain $Q \subset \mathbb{R}^{d+1}$ 
is now defined from the parametric domain 
$\widehat{Q} = (0,1)^{d+1}$ by the  geometrical mapping 
%
\begin{equation}
\label{eqn:Sec:GeometricalMappingRepresentation}
      \mathrm{\Phi} : \widehat{Q} \rightarrow Q=\mathrm{\Phi}( \widehat{Q}) \subset \mathbb{R}^{d+1}, 
      	\quad  \mathrm{\Phi}(\xi) = \sum_{{ {i} \in\overline{\mathcal{I}}}} \widehat{R}_{{i},{p}}(\xi) \mathbf{P}_{{{i}}},
  \end{equation}
where $\widehat{R}_{{i},{p}}$,  ${ {i} \in \overline{\mathcal{I}}}$, are the multivariate NURBS basis functions 
and  
$\{ \mathbf{P}_{{i}}\}_{{i} \in \overline{\mathcal{I}}}  \subset \mathbb{R}^{d+1}$ are the control points,
see also Figure~\ref{Fig:Sec:SpaceTimeMapping} 
as well as Figures~\ref{fig:Moving2D} and \ref{fig:Moving3D} 
in Subsection~\ref{subsec:MovingSpatialComputationalDomain}.


By means of the geometrical mapping \eqref{eqn:Sec:GeometricalMappingRepresentation},
we define  the physical mesh $\mathcal{K}_h$, 
associated to the computational domain $Q$, whose vertices and elements are the images of the vertices and elements of the 
corresponding underlying mesh 
$\widehat{\mathcal{K}}_h$ 
in the parametric domain $\widehat{Q}$,
i.e., 
$\mathcal{K}_h := \{ K = \mathrm{\Phi}(\widehat{K}) : \widehat{K} \in \widehat{\mathcal{K}}_h \}.$
We denote the global mesh size of the mesh in the  physical domain by 
$h :=\max\{h_K: K \in \mathcal{K}_h\},$ with $h_K := \| \nabla \mathrm{\Phi} \|_{L_{\infty}(K)} \hat{h}_{\hat{K}}$  
and $\hat{h}_{\hat{K}} = \mbox{diam}(\hat{K})$.
Further, we assume that the physical mesh is quasi uniform, i.e., 
there exists a positive constant $C_u$, independent of $h$, such that
\begin{align}
\label{equ:quasiUniformAssumption}
 	 h_K \leq h \leq C_u h_K \qquad \text{for all } K \in \mathcal{K}_h.
\end{align}

Finally, we define the NURBS space  
%
\begin{equation}
\label{eqn:discretenurbsspace}
   V_{h}= \text{span}\{\varphi_{h,{i}} = \widehat{R}_{{i},{p}} \circ \mathrm{\Phi}^{^{-1}}  \}_{{i} \in \overline{\mathcal{I}}}
\end{equation}
 on $Q$ by a push-forward of the NURBS basis functions,
 %
where we assumed that the geometrical mapping $\mathrm{\Phi}$  
is invertible a.e. in  $Q$, with smooth inverse  on each element $K$ 
of the  physical mesh 
$K \in \mathcal{K}_h$, see 
\cite{LMN:BazilevsBeiraoCottrellHughesSangalli:2006a} 
and \cite{LMN:TagliabueDedeQuarteroni:2014a} for more details.
Furthermore, we introduce the subspace 
\begin{equation}
\label{eqn:discretenurbsspacezero}
   V_{0h}
	 := V_{h} \cap H^{1,1}_{0,\underline{0}}(Q)
	  =\{v_h \in V_{h} : v_h|_{\Sigma \cup \Sigma_0} = 0 \}
	 = \text{span}\{\varphi_{h,{i}}\}_{{i} \in \mathcal{I}}
\end{equation}
of all functions from $V_{h}$ fulfilling homogeneous boundary and initial conditions.
%
For simplicity, we below assume the same polynomial degree for all coordinate directions, i.e. $p_\alpha=p$
for all $\alpha=1,2,\ldots,d+1$.


\subsection{Discrete Space-Time Variational Problem}
\label{Subsec:DiscreteVariationalFormulation}
In order to derive our discrete scheme for defining an approximate solution $u_h \in {V}_{0h},$ we multiply 
our parabolic PDE \eqref{eqn:ModelProblem} by a 
time-upwind test function of the form $v_h + \theta h \partial_t v_h$
with an arbitrary $v_h \in {V}_{0h} \subset H^{1,1}_{0,\underline{0}}(Q)$ and a positive constant  $\theta$ which
will be defined later, and integrate over the whole space-time cylinder $Q,$ giving the identity
\begin{align}
 \label{eqn:spacetimeformulation}
 \int_{Q} ( \partial_{t} u_h v_h + \theta h \partial_{t} u_h \partial_{t} v_h  
            &-\Delta u_h(v_h + \theta h \partial_t v_h))\, dx dt \notag  \\ 
            &= \int_{Q} f( v_h + \theta h \partial_{t} v_h) \, dx dt  
\end{align}
that is formally valid for all $v_h \in {V}_{0h} \subset H^{2,1}_{0,\underline{0}}(Q).$
Integration by parts with respect to $x$ in the last term of the bilinear form 
on the left-hand side of \eqref{eqn:spacetimeformulation} gives
 \begin{align*}
 &  \int_{Q}  (\partial_{t} u_h v_h + \theta h \partial_{t} u_h \partial_{t} v_h  
           + \nabla_x u_h \cdot \nabla_x  v_h + \theta h   \nabla_x  u_h \cdot \nabla_x \partial_{t} v_h)\,dxdt \\
 & - \int_{\partial Q} n_x \cdot \nabla_x u_h \,(v_h + \theta h \partial_{t} v_h)\, ds
   = \int_{Q} f v_h  \, dx dt + \theta h  \int_{Q} f \partial_{t} v_h \, dx dt.
\end{align*}
We mention that $\partial_{t} v_h$ is differentiable wrt $x$ due to 
the special tensor product structure of $V_{0h}$.
%
Using the facts that $v_h$ and $\partial_{t} v_h$ are always zero on $\Sigma$,
and the $x$-components $n_x=(n_1,\ldots,n_d)^\top$ of the normal
$n = (n_1,\ldots,n_d,n_{d+1})^\top = (n_x,n_t)^\top$ is zero on $\Sigma_0$ and $\Sigma_T$,
we arrive at our discrete scheme:
find $u_h \in {V}_{0h}$ such that 
\begin{align}
 \label{eqn:DiscreteVariationalFormulation}
  a_h(u_h,v_h)  = l_h(v_h) \quad \forall v_h \in {V}_{0h},
\end{align}
where 
\begin{align}
 a_h(u_h,v_h) = &  \int_{Q}  (\partial_{t} u_h v_h + \theta h \partial_{t} u_h \partial_{t} v_h  \notag \\
          & + \nabla_x u_h \nabla_x  v_h 
	    + \theta h   \nabla_x  u_h \cdot \nabla_x \partial_{t} v_h)\, dxdt,
\label{eqn:bilinearform}  \\
   l_h(v_h) = &  \int_{Q} f  \left[ v_h + \theta h \partial_{t} v_h \right]\, dxdt,
\label{eqn:linearform}
\end{align}
and $\theta$ is a positive constant which will be determined later.

Next we will show that the discrete bilinear form \eqref{eqn:bilinearform} is 
${V}_{0h}-$coercive with respect to the 
discrete norm
\begin{align}
\label{eqn:discretenorm}
 \|  v_h  \|_h = \left[ \| \nabla_x v_h \|_{L_2(Q)}^2 + \theta h \|\partial_t v_h \|_{L_2(Q)}^2 + \dfrac{1}{2}\| v_h \|^2_{L_2(\Sigma_T)} \right]^{1/2}.
\end{align}
\begin{remark}
\label{rem:discretenorm}
We note that \eqref{eqn:discretenorm} is a norm on  $V_{0h}$. 
Indeed, if $\| v_h\|_h = 0$ for some $v_h \in {V}_{0h},$ 
then we have $\nabla_x v_h = 0$ and $\partial_t v_h = 0$ in $Q$ 
and also $v_h = 0 $ on $\Sigma_T.$ This means that the function $v_h$ is constant 
in the space-time cylinder $Q.$
Since $v_h = 0$ on the boundary $\Sigma_T$ and all functions of the  discrete space ${V}_{0h}$ 
are continuous, 
we conclude that $v_h = 0$ in the whole space-time domain $Q.$ 
The other norm axioms are obviously fulfilled.
\end{remark}
\begin{lemma}
 \label{lem:Coercivity}
 The discrete bilinear form $a_h(\cdot,\cdot) : {V}_{0h} \times {V}_{0h} \rightarrow \mathbb{R},$
 defined by \eqref{eqn:bilinearform}, is ${V}_{0h}-$coercive 
wrt
the norm $\|\cdot\|_h,$ i.e.
\begin{align}
 \label{eqn:coercive}
   a_h(v_h,v_h) &\geq \mu_c \|v_h\|_h^2, \quad \forall v_h \in  V_{0h} 
 \end{align}
 with $ \mu_c = 1.$
\end{lemma}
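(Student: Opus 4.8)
The plan is to evaluate $a_h(v_h,v_h)$ directly from the definition \eqref{eqn:bilinearform} and identify each of the four terms with a piece of $\|v_h\|_h^2$ from \eqref{eqn:discretenorm}. First I would note that two of the four terms are already in the right form: $\int_Q \theta h\, (\partial_t v_h)^2\, dx dt = \theta h\|\partial_t v_h\|_{L_2(Q)}^2$ and $\int_Q |\nabla_x v_h|^2\, dx dt = \|\nabla_x v_h\|_{L_2(Q)}^2$. So the task reduces to showing that the remaining two terms, namely $\int_Q \partial_t v_h\, v_h\, dx dt$ and $\theta h \int_Q \nabla_x v_h \cdot \nabla_x \partial_t v_h\, dx dt$, together produce (at least) $\tfrac12 \|v_h\|_{L_2(\Sigma_T)}^2$.

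The key observation is that each of these two terms is, up to the $\theta h$ factor, an integral of the form $\int_Q \partial_t (\tfrac12 w^2)\, dx dt$ — with $w = v_h$ in the first case and $w = |\nabla_x v_h|$ (componentwise) in the second, since $\nabla_x v_h \cdot \nabla_x \partial_t v_h = \partial_t(\tfrac12 |\nabla_x v_h|^2)$ using that mixed partials commute on $V_{0h}$ by its tensor-product structure (already noted in the excerpt). Applying the fundamental theorem of calculus in $t$ (equivalently, integrating by parts in $t$ and using that the purely spatial part of the boundary $\Sigma$ contributes nothing because $v_h$ and $\nabla_x v_h$ need not vanish there but $n_t = 0$, while $\Sigma_0$ contributes nothing because $v_h = 0$ and $\nabla_x v_h = 0$ there, the latter since $v_h|_{\Sigma_0} = 0$), these become boundary integrals over $\Sigma_T$:
\begin{align*}
  \int_Q \partial_t v_h\, v_h\, dx dt &= \tfrac12 \int_{\Sigma_T} v_h^2\, dx = \tfrac12 \|v_h\|_{L_2(\Sigma_T)}^2, \\
  \theta h \int_Q \nabla_x v_h \cdot \nabla_x \partial_t v_h\, dx dt &= \tfrac{\theta h}{2} \int_{\Sigma_T} |\nabla_x v_h|^2\, dx = \tfrac{\theta h}{2} \|\nabla_x v_h\|_{L_2(\Sigma_T)}^2 \ge 0.
\end{align*}
Adding the four contributions gives $a_h(v_h,v_h) = \|\nabla_x v_h\|_{L_2(Q)}^2 + \theta h \|\partial_t v_h\|_{L_2(Q)}^2 + \tfrac12 \|v_h\|_{L_2(\Sigma_T)}^2 + \tfrac{\theta h}{2}\|\nabla_x v_h\|_{L_2(\Sigma_T)}^2 \ge \|v_h\|_h^2$, which is exactly \eqref{eqn:coercive} with $\mu_c = 1$.

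The only delicate point is justifying the integration by parts in $t$ for functions in $V_{0h}$ and correctly accounting for the boundary pieces: one must be sure that $\partial_t v_h$ has enough regularity (it does, since $v_h$ is a spline, hence smooth in each direction away from knots, and globally $H^{1,1}$), that $v_h = 0$ on $\Sigma_0$ forces $\nabla_x v_h = 0$ on $\Sigma_0$ as well (true because $v_h$ vanishes on all of $\Sigma_0 = \Omega \times \{0\}$, so its tangential — i.e. spatial — gradient there vanishes), and that the lateral boundary $\Sigma$ drops out because there the time component of the outward normal is zero. I expect this bookkeeping of the space-time boundary $\partial Q = \Sigma \cup \Sigma_0 \cup \Sigma_T$ to be the main (though still routine) obstacle; everything else is the elementary identity $w\, \partial_t w = \partial_t(\tfrac12 w^2)$.
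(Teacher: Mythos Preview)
Your proof is correct and follows essentially the same route as the paper's own argument: rewrite the two cross terms as $\tfrac12\partial_t(v_h^2)$ and $\tfrac{\theta h}{2}\partial_t(|\nabla_x v_h|^2)$, convert to boundary integrals via Gauss' theorem (using $n_t=0$ on $\Sigma$ and $v_h=\nabla_x v_h=0$ on $\Sigma_0$), and drop the nonnegative extra term $\tfrac{\theta h}{2}\|\nabla_x v_h\|_{L_2(\Sigma_T)}^2$. The only cosmetic difference is that the paper phrases the integration step as an application of Gauss' theorem on $\partial Q$, whereas you invoke the fundamental theorem of calculus in $t$ directly; the content is identical.
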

\begin{proof}
Using Gauss' theorem and the fact that $n_t = 0$ on $\Sigma$, we get 
\begin{align}
  a_h(v_h,v_h)  =& \int_{Q}  (\partial_{t} v_h  v_h + \theta h (\partial_{t} v_h )^2
                + \nabla_x v_h \cdot \nabla_x  v_h + \theta h   \nabla_x  v_h \cdot \nabla_x \partial_{t} v_h)\,dxdt \notag \\
                 =& \int_{Q} \left[ \frac{1}{2} \partial_t v_h^2 +  \theta h (\partial_{t} v_h )^2 + |\nabla_x  v_h |^2 + \frac{\theta h}{2} \partial_t |\nabla_x v_h|^2 \right]\,dxdt\notag \\
                 =& \frac{1}{2} \int_{\partial Q} v_h^2 n_t ds  +  \theta h \|\partial_{t} v_h \|^2_{L_2(Q)} + \|\nabla_x v_h \|^2_{L_2(Q)} + \frac{\theta h}{2}\int_{\partial Q} |\nabla_x v_h|^2 n_t ds \notag  \\
                =& \frac{1}{2} \left(\| v_h \|^2_{L_2(\Sigma_T)} - \| v_h \|^2_{L_2(\Sigma_0)} \right) +  \theta h \|\partial_{t} v_h \|^2_{L_2(Q)} + \|\nabla_x v_h \|^2_{L_2(Q)} \notag \\
               & + \frac{\theta h}{2} \left( \|\nabla_x v_h\|^2_{L_2(\Sigma_T)} -  \| \nabla_x v_h \|^2_{L_2(\Sigma_0)} \right) \notag  
\end{align}
for all $v_h \in {V}_{0h}.$
Since $v_h(x,0) = 0$ and, therefore, $\nabla_x v_h(x,0)=0$ for all $x \in \overline{\Omega}$,
i.e., $v_h = 0$ and $\nabla_x v_h = 0$ on $\Sigma_0$, we immediately arrive at the estimate
\begin{align}
a_h(v_h,v_h)  =&   \frac{1}{2} \| v_h \|^2_{L_2(\Sigma_T)} +  \theta h \|\partial_{t} v_h \|^2_{L_2(Q)} + \|\nabla_x v_h \|^2_{L_2(Q)} + \frac{\theta h}{2}  \|\nabla_x v_h\|^2_{L_2(\Sigma_T)} \notag \\
         \geq&  \frac{1}{2} \| v_h \|^2_{L_2(\Sigma_T)} +  \theta h \|\partial_{t} v_h \|^2_{L_2(Q)} + \|\nabla_x v_h \|^2_{L_2(Q)} \notag \\ 
    =& \|v_h\|_h^2 \;\; \forall v \in {V}_{0h},  \notag  
\end{align}
which proves our lemma.
\qed
\end{proof}

Lemma~\ref{lem:Coercivity} immediately yields  that the solution $u_h \in {V}_{0h}$ of 
\eqref{eqn:DiscreteVariationalFormulation} is unique. Indeed, let us assume that there 
is another solution $\widetilde{u}_h \in V_{0h}$ such that
$a_h(\widetilde{u}_h,v_h)=l_h(v_h)$ for all $v_h \in V_{0h}$. Then, taking the difference,
we get $a_h(u_h-\widetilde{u}_h,v_h)= 0$ for all $v_h \in V_{0h}$. 
Choosing $v_h = u_h - \widetilde{u}_h \in V_{0h}$ and applying Lemma~\ref{lem:Coercivity},
we immediately obtain the estimate 
$0 \leq \|u_h - \widetilde{u}_h \|_h^2 \leq a_h(u_h - \widetilde{u}_h,u_h - \widetilde{u}_h) = 0$,
i.e., $ u_h - \widetilde{u}_h=0$. Thus, the solution is unique.
Since the discrete variational problem  \eqref{eqn:DiscreteVariationalFormulation} is posed 
in the finite dimensional space $V_{0h}$, the uniqueness yields existence 
of the solution $u_h \in {V}_{0h}$ of \eqref{eqn:DiscreteVariationalFormulation}.

Since we already choose the standard B-spline resp. NURBS basis of $V_{0h}$ 
in \eqref{eqn:discretenurbsspacezero} as 
$V_{0h} = \text{span}\{ \varphi_{h,{i}} \}_{{i}   \in \mathcal{I}}$,
we look for the solution $u_h \in V_{0h}$ of (\ref{eqn:DiscreteVariationalFormulation})
in the form of 
\begin{align*}
 u_h(x,t) = u_h(x_1,\ldots,x_d,x_{d+1}) = 
\sum_{{i} \in \mathcal{I}} u_{{i}} \varphi_{h,{i}}(x,t)
\end{align*}
where $\underline{u}_h:=[u_{{i}}]_{{i}\in \mathcal{I}} \in \mathbb{R}^{N_h = |\mathcal{I}|}$ 
is the unknown vector of control points 
defined by the solution of the system of algebraic equations
\begin{align}
 \label{eqn:algebraicequation}
   K_h \underline{u}_h = \underline{f}_h,
\end{align}
with the $N_h \times N_h$ 
system
matrix $K_h = [K_\mathbf{ij}=a_h(\varphi_{h,{j}},\varphi_{h,{i}})]_{{i,j} \in \mathcal{I}}$
and the
right-hand side 
$\underline{f}_h = [f_\mathbf{i}=l_h(\varphi_{h,{i}})]_{{i} \in \mathcal{I}} 
\in \mathbb{R}^{N_h}$
that can be generated as in a usual IgA code for elliptic problems.
It is again clear from Lemma~\ref{lem:Coercivity} that the stiffness matrix is regular.

In the next section, we derive an a-priori discretization error estimate wrt the $\|\cdot \|_h$ norm. 


\section{Discretization Error Analysis}
\label{DiscretizationErrorEstimates}
At first, we will present the main ingredients, which are necessary for the derivation of our
a priori discretization error estimate, in form of Lemmas.
%
%
%
\begin{lemma}
\label{lem:ContinuoustraceInequality}
Let $K = \Phi (\widehat{K}) \in \mathcal{K}_h$ and $\widehat{K} \in  \mathcal{\widehat{K}}_h.$ 
Then the scaled trace inequality
\begin{align}
\label{eqn:ContinuoustraceInequality}
  \| v \|_{L_2(\partial K)} \leq C_{tr} h^{-1/2}_K \left( \| v \|_{L_2(K)} + h_K | v |_{H^1(K)}\right)  
\end{align}
holds for all $v \in H^1(K)$, where 
$h_K$ denotes the mesh size of $K$ in the physical domain, and 
$C_{tr}$ is a positive constant that only depends
 on the 
shape regularity of the mapping $\Phi$ and the 
constant $C$ in the trace inequality
\begin{equation}
\label{eqn:ContinuoustraceInequality0}
 \|f\|_{L_2(\partial (0,1)^{d+1})} \le C \|f\|_{H^1((0,1)^{d+1})}
\end{equation}
that is valid for all $f \in H^1((0,1)^{d+1})$.
\end{lemma}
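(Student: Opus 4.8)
The plan is to reduce the scaled trace inequality \eqref{eqn:ContinuoustraceInequality} on the physical element $K$ to the fixed trace inequality \eqref{eqn:ContinuoustraceInequality0} on the unit cube $(0,1)^{d+1}$ by a change of variables. Let $F_{\widehat K}:(0,1)^{d+1}\to\widehat K$ be the affine bijection onto the parametric element $\widehat K$ (a box with edge lengths comparable to $\hat h_{\widehat K}$), and put $G_K:=\Phi\circ F_{\widehat K}:(0,1)^{d+1}\to K$. For $v\in H^1(K)$ I would set $\hat v:=v\circ G_K\in H^1((0,1)^{d+1})$ and apply \eqref{eqn:ContinuoustraceInequality0} to $\hat v$; everything else is the bookkeeping of how the $L_2$ norm, the $H^1$-seminorm and the boundary $L_2$ norm transform under $G_K$.

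First I would record the scalings implied by the local quasi-uniformity of the parametric mesh and the uniform regularity (bounded geometric distortion) of $\Phi$. Since $h_K=\|\nabla\Phi\|_{L_\infty}\hat h_{\widehat K}$ with $\|\nabla\Phi\|_{L_\infty}$ bounded above and below, $\hat h_{\widehat K}$ and $h_K$ are comparable, and there are constants $c_1,c_2,c_3>0$ depending only on the shape regularity of $\Phi$ and of the mesh such that
\[
\|\nabla G_K\|_{L_\infty((0,1)^{d+1})}\le c_1 h_K,\qquad |\det\nabla G_K|\ge c_2 h_K^{d+1}\ \text{a.e.},
\]
and the $d$-dimensional surface Jacobian of $G_K$ on each face of $(0,1)^{d+1}$ is bounded by $c_3 h_K^{d}$. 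Then, transforming the three integrals by $x=G_K(\hat\xi)$,
\[
\|v\|_{L_2(\partial K)}^2\le c_3 h_K^{d}\,\|\hat v\|_{L_2(\partial(0,1)^{d+1})}^2,\qquad \|\hat v\|_{L_2((0,1)^{d+1})}^2\le c_2^{-1}h_K^{-(d+1)}\|v\|_{L_2(K)}^2,
\]
\[
|\hat v|_{H^1((0,1)^{d+1})}^2\le\|\nabla G_K\|_{L_\infty}^2\,c_2^{-1}h_K^{-(d+1)}|v|_{H^1(K)}^2\le c_1^2 c_2^{-1}h_K^{1-d}|v|_{H^1(K)}^2 .
\]

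Inserting the last two bounds into \eqref{eqn:ContinuoustraceInequality0} applied to $\hat v$ and then using the first one, the powers of $h_K$ combine as $h_K^{d}\cdot h_K^{-(d+1)}=h_K^{-1}$ and $h_K^{d}\cdot h_K^{1-d}=h_K$, so that
\[
\|v\|_{L_2(\partial K)}^2\le C_{tr}^2\,h_K^{-1}\big(\|v\|_{L_2(K)}^2+h_K^2|v|_{H^1(K)}^2\big)\le C_{tr}^2\,h_K^{-1}\big(\|v\|_{L_2(K)}+h_K|v|_{H^1(K)}\big)^2
\]
with $C_{tr}^2:=c_3 c_2^{-1}\max\{1,c_1^2\}\,C^2$; taking square roots gives \eqref{eqn:ContinuoustraceInequality}, and $C_{tr}$ depends only on $C$ and on the shape-regularity data, as claimed.

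The only nonroutine step is establishing these two-sided scaling bounds uniformly in $K$ — in particular the lower bound $|\det\nabla G_K|\gtrsim h_K^{d+1}$ and the upper bound on the surface Jacobian — which is exactly where the uniform regularity of $\Phi$ and the (local) quasi-uniformity of the mesh enter, and it is what makes $C_{tr}$ independent of $h$ and of the particular element. A minor point: \eqref{eqn:ContinuoustraceInequality0} is assumed for all $f\in H^1((0,1)^{d+1})$, so no density argument is needed here; were one to prove \eqref{eqn:ContinuoustraceInequality0} itself, it would follow for smooth $f$ from the divergence theorem applied coordinate-wise to $f^2$ and then by density of $C^\infty(\overline{(0,1)^{d+1}})$ in $H^1((0,1)^{d+1})$.
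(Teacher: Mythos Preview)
Your proof is correct and follows exactly the mapping-and-scaling route the paper indicates: the paper's own proof is a one-line reference to pulling back to the unit cube via $\Phi$ and scaling as in \cite[Theorem~3.2]{LMN:EvansHughes:2013a}, and you have simply written out those details explicitly. The tracking of the volume Jacobian, surface Jacobian, and gradient under $G_K=\Phi\circ F_{\widehat K}$ is accurate, and the resulting dependence of $C_{tr}$ on $C$ and the shape-regularity data matches the statement.
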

\begin{proof}
The proof follows from \eqref{eqn:ContinuoustraceInequality0} by mapping and scaling 
similar to the proof of \cite[Theorem~3.2]{LMN:EvansHughes:2013a}, 
where \eqref{eqn:ContinuoustraceInequality} is proved for $d=2$ 
in such a way that the structure of the constants $C_{tr}$ is explicitly given.
\qed
\end{proof}
\begin{lemma}
\label{lem:InverseInequalities}
Let $K$ be an arbitrary mesh element from  $\mathcal{K}_h$. 
Then the inverse inequalities 
%
\begin{equation}
\label{eqn:inverseinequality}
  \| \nabla v_h \|_{L_2(K)} \leq C_{inv,1} h^{-1}_K \| v_h \|_{L_2(K)}
\end{equation}
and 
\begin{equation}
 \label{eqn:traceinequality}
  \|  v_h \|_{L_2(\partial K)} \leq C_{inv,0} h^{-1/2}_K \| v_h \|_{L_2(K)}
\end{equation}
hold for all $v_h \in V_{h}$, 
where $C_{inv,1}$ and $C_{inv,0}$ are positive constants,
which are independent of 
$h_K$ and $K\in \mathcal{K}_h$.
\end{lemma}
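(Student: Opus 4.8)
The plan is to reduce the inverse inequalities on a physical element $K = \Phi(\widehat K)$ to the corresponding standard inverse inequalities on the parametric element $\widehat K$, and from there to a fixed reference element, using the regularity of the geometric mapping $\Phi$ and the quasi-uniformity assumption \eqref{equ:quasiUniformAssumption}. First I would recall that on the parametric side the spaces $\widehat R_{i,p}$ are (piecewise) polynomials times a fixed smooth weight function $W$, so that on each $\widehat K$ the NURBS space is a finite-dimensional space of fixed dimension depending only on $p$ and $d$; hence the classical scaling argument for polynomials gives an inverse inequality on a unit reference cube, which transfers to $\widehat K$ with constants depending only on $p$, $d$ and the local quasi-uniformity of the knot vectors. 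This produces
\[
  \|\nabla_{\widehat x}\widehat v_h\|_{L_2(\widehat K)} \le \widehat C\,\widehat h_{\widehat K}^{-1}\,\|\widehat v_h\|_{L_2(\widehat K)},
  \qquad
  \|\widehat v_h\|_{L_2(\partial\widehat K)} \le \widehat C\,\widehat h_{\widehat K}^{-1/2}\,\|\widehat v_h\|_{L_2(\widehat K)}
\]
for all $\widehat v_h$ in the parametric NURBS space on $\widehat K$.

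Next I would push these forward under $\Phi$. Writing $v_h = \widehat v_h\circ\Phi^{-1}$, the chain rule gives $\nabla_x v_h = (\nabla\Phi)^{-\top}\,\nabla_{\widehat x}\widehat v_h\circ\Phi^{-1}$, and the change-of-variables formula introduces the Jacobian determinant $\det\nabla\Phi$. Under the assumption made after \eqref{eqn:discretenurbsspace} that $\Phi$ is invertible a.e.\ with smooth inverse on each $K$, the quantities $\|\nabla\Phi\|_{L_\infty(\widehat K)}$, $\|(\nabla\Phi)^{-1}\|_{L_\infty(\widehat K)}$, and $\det\nabla\Phi$ (from above and below) are all controlled by the shape regularity of $\Phi$, uniformly in $K$. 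Combining these bounds with the definition $h_K = \|\nabla\Phi\|_{L_\infty(K)}\,\widehat h_{\widehat K}$ and the quasi-uniformity \eqref{equ:quasiUniformAssumption}, the parametric mesh size $\widehat h_{\widehat K}$ and the physical one $h_K$ are equivalent up to constants independent of $K$, and the factors $h_K^{-1}$ and $h_K^{-1/2}$ appear with the correct powers. For the trace estimate \eqref{eqn:traceinequality} one also uses that the surface measure on $\partial K$ transforms to the surface measure on $\partial\widehat K$ with a bounded, boundedly-invertible factor, which is again a consequence of the regularity of $\Phi$ (this is essentially the same mapping argument as in Lemma~\ref{lem:ContinuoustraceInequality}, only now specialized to the finite-dimensional discrete space so that the $H^1$-seminorm on the right can be absorbed into the $L_2$-norm via the parametric inverse inequality).

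Assembling, I would set $C_{inv,1}$ and $C_{inv,0}$ to collect the reference-element constant $\widehat C$, the geometric factors from $\Phi$, and the quasi-uniformity constant $C_u$; all of these are independent of $h_K$ and of the particular element $K\in\mathcal K_h$, which is exactly what the statement claims. The main obstacle — really the only non-routine point — is making precise that the NURBS space restricted to $\widehat K$ behaves, for scaling purposes, like a polynomial space of fixed dimension: the rational weight $W$ is a fixed smooth positive function on $\overline{\widehat Q}$, so one must note that multiplication by $W^{\pm 1}$ is bounded in $L_\infty$ and in $W^{1,\infty}$ uniformly, which lets one reduce the rational case to the polynomial one at the cost of constants depending only on $W$ (hence on the chosen geometry), exactly as in the references \cite{LMN:BazilevsBeiraoCottrellHughesSangalli:2006a} and \cite{LMN:EvansHughes:2013a}. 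Everything else is the standard mapping-and-scaling bookkeeping, so in the write-up I would simply invoke those references for the parametric inverse estimates and present the push-forward step in detail.
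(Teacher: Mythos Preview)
Your proposal is correct and in fact more detailed than the paper's own proof, which simply cites the inverse estimates in \cite[Theorems~4.1 and~4.2]{LMN:BazilevsBeiraoCottrellHughesSangalli:2006a} for \eqref{eqn:inverseinequality} and \cite[Theorem~4.1]{LMN:EvansHughes:2013a} for \eqref{eqn:traceinequality}. Your mapping-and-scaling argument (polynomial inverse inequality on a reference cube, push-forward via the uniformly regular map $\Phi$, and reduction of the rational NURBS case to the polynomial one by the boundedness of $W^{\pm1}$ in $W^{1,\infty}$) is precisely the route those references take, so there is no substantive difference.
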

\begin{proof}
The inverse inequality \eqref{eqn:inverseinequality} is a special case of
the inverse inequalities given in 
\cite[Theorem~4.1]{LMN:BazilevsBeiraoCottrellHughesSangalli:2006a}
and
\cite[Theorem~4.2]{LMN:BazilevsBeiraoCottrellHughesSangalli:2006a}. 
The proof of \eqref{eqn:traceinequality} can be found in
\cite[Theorem~4.1]{LMN:EvansHughes:2013a}.
\qed
\end{proof}

We note that estimate \eqref{eqn:inverseinequality} immediately implies 
the inverse inequality
\begin{equation*}
\label{eqn:inverseinequality0}
 \|\partial_t v_h \| _{L_2(K)}  \leq C_{inv,1} h_K^{-1} \| v_h \|_{L_2(K)}
  \quad \forall v_h \in V_{h},
\end{equation*}
because $\partial_t$ is a part of $\nabla = (\nabla_x, \partial_t)^\top$.
Below we need 
the inverse inequality
\begin{equation}
\label{eqn:inverseinequality1}
 \|\partial_t \partial_{x_i}v_h \| _{L_2(K)}  
    \leq C_{inv,1} h_K^{-1} \| \partial_{x_i}v_h \|_{L_2(K)}
\end{equation}
that is obviously valid for all $v_h \in V_{h}$ and $i=1,\ldots,d$ as well.

%
%

To prove the a priori error estimate, we need to show the uniform boundedness 
of the discrete bilinear form $a_h(\cdot,\cdot)$ on $V_{0h,*} \times V_{0h}$,
where the space $V_{0h,*} = H^{1,0}_0(Q) \cap H^{2,1}(Q)+ V_{0h}$ 
is equipped with the norm
%
\begin{equation}
\label{eqn:discretenorm*}
  \|v\|_{h,*}  = \left[\| v \|^2_{h}  + (\theta h)^{-1}\|v_h\|_{L_2(Q)}^2\right]^{1/2}.
\end{equation}
%
%
\begin{lemma}
\label{lem:boundedness}
  The discrete bilinear form $a_h(\cdot,\cdot)$,
  defined by \eqref{eqn:bilinearform}, 
  is uniformly bounded on $V_{0h,*} \times V_{0h}$, 
  i.e., there exists a positive constant $\mu_b$ that does not depend on $h$
  such that
  \begin{equation}
  \label{eqn:boundedness}
     | a_h(u,v_h) |\leq \mu_b \|u\|_{h,*}\|v_h\|_{h},\; 
      \forall \, u \in V_{0h,*}, \forall \, v_h \in V_{0h}.
  \end{equation}
\end{lemma}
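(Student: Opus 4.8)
The plan is to bound each of the four integral terms in the definition \eqref{eqn:bilinearform} of $a_h(u,v_h)$ separately by a product of a norm of $u$ and a norm of $v_h$, and then collect the pieces. Writing $a_h(u,v_h)$ as
\[
\int_Q \partial_t u\, v_h\,dxdt + \theta h\int_Q \partial_t u\,\partial_t v_h\,dxdt + \int_Q \nabla_x u\cdot\nabla_x v_h\,dxdt + \theta h\int_Q \nabla_x u\cdot\nabla_x\partial_t v_h\,dxdt,
\]
the three "diagonal" terms are immediate from Cauchy--Schwarz: the second term is at most $\theta h\,\|\partial_t u\|_{L_2(Q)}\|\partial_t v_h\|_{L_2(Q)} \le \|u\|_h\|v_h\|_h$ after absorbing the $\sqrt{\theta h}$ weights into the respective $\|\cdot\|_h$-norms, and likewise the third term is at most $\|\nabla_x u\|_{L_2(Q)}\|\nabla_x v_h\|_{L_2(Q)} \le \|u\|_h\|v_h\|_h$. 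The first term is handled by pulling out the $(\theta h)^{-1/2}$ weight that lives in $\|u\|_{h,*}$ via \eqref{eqn:discretenorm*}: $\int_Q \partial_t u\, v_h \le \|\partial_t u\|_{L_2(Q)}\|v_h\|_{L_2(Q)} = \big((\theta h)^{1/2}\|\partial_t u\|_{L_2(Q)}\big)\big((\theta h)^{-1/2}\|v_h\|_{L_2(Q)}\big) \le \|u\|_h\,\|v\|_{h,*}$ — wait, more carefully, $(\theta h)^{1/2}\|\partial_t u\|_{L_2(Q)} \le \|u\|_{h,*}$ (indeed $\theta h\|\partial_t u\|^2 \le \|u\|_h^2 \le \|u\|_{h,*}^2$) and $(\theta h)^{-1/2}\|v_h\|_{L_2(Q)} \le \|v_h\|_{h,*}$; but since $v_h \in V_{0h}$ we want this bounded by $\|v_h\|_h$, which is exactly where the inverse inequality enters.

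The fourth term, $\theta h\int_Q \nabla_x u\cdot\nabla_x\partial_t v_h\,dxdt$, is the one requiring real work, because $\|\cdot\|_h$ controls $\nabla_x v_h$ and $\partial_t v_h$ but not the mixed second derivative $\partial_t\nabla_x v_h$. Here I would work element by element: on each $K\in\mathcal{K}_h$ apply Cauchy--Schwarz and then the inverse inequality \eqref{eqn:inverseinequality1}, $\|\partial_t\partial_{x_i}v_h\|_{L_2(K)} \le C_{inv,1}h_K^{-1}\|\partial_{x_i}v_h\|_{L_2(K)}$, to get
\[
\theta h\int_K \nabla_x u\cdot\nabla_x\partial_t v_h\,dxdt \le \theta h\,C_{inv,1}h_K^{-1}\|\nabla_x u\|_{L_2(K)}\|\nabla_x v_h\|_{L_2(K)}.
\]
Using the quasi-uniformity assumption \eqref{equ:quasiUniformAssumption}, $h\,h_K^{-1}\le C_u$, so this is at most $\theta C_u C_{inv,1}\|\nabla_x u\|_{L_2(K)}\|\nabla_x v_h\|_{L_2(K)}$; summing over $K$ and applying the discrete Cauchy--Schwarz inequality in $K$ gives $\theta C_u C_{inv,1}\|\nabla_x u\|_{L_2(Q)}\|\nabla_x v_h\|_{L_2(Q)} \le \theta C_u C_{inv,1}\|u\|_{h}\|v_h\|_{h}$. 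The point is that the factor $\theta h$ exactly compensates the $h_K^{-1}$ produced by the inverse inequality, up to the quasi-uniformity constant.

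Similarly, to finish the first term cleanly within the stated norms, I would bound $\|v_h\|_{L_2(Q)}$ not by the $(\theta h)^{-1/2}$-weighted quantity in $\|\cdot\|_{h,*}$ (which is not available for the right argument, since we only want $\|v_h\|_h$ on the right) but rather observe that $\|\partial_t u\|_{L_2(Q)}\|v_h\|_{L_2(Q)} = \big((\theta h)^{1/2}\|\partial_t u\|_{L_2(Q)}\big)\big((\theta h)^{-1/2}\|v_h\|_{L_2(Q)}\big)$ and then use the Poincaré-type / inverse estimate $\|v_h\|_{L_2(Q)} \le C\,h\,\|\partial_t v_h\|_{L_2(Q)} + \|v_h\|_{L_2(\Sigma_T)}$-style control — but in fact the cleanest route, and the one I expect the authors take, is simply to keep $(\theta h)^{-1/2}\|v_h\|_{L_2(Q)}$ as a term bounded by $\|v_h\|_{h,*}$ and instead apply the inverse inequality \eqref{eqn:inverseinequality0} elementwise to $v_h$: $\|v_h\|_{L_2(K)}$ is not bounded by $h_K\|\partial_t v_h\|_{L_2(K)}$ in general, so rather bound $(\theta h)^{-1}\|v_h\|_{L_2(Q)}^2$ against $\theta h\|\partial_t v_h\|_{L_2(Q)}^2$ using $\|v_h\|_{L_2(K)}\le C_{inv,1}^{-1}h_K\|\nabla v_h\|_{L_2(K)}$? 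No — the inverse inequality goes the wrong way. The honest resolution: for the first term use $\int_Q\partial_t u\,v_h \le \big((\theta h)^{1/2}\|\partial_t u\|_{L_2(Q)}\big)\cdot\big((\theta h)^{-1/2}\|v_h\|_{L_2(Q)}\big) \le \|u\|_{h,*}\cdot(\theta h)^{-1/2}\|v_h\|_{L_2(Q)}$, and then invoke the inverse inequality \eqref{eqn:inverseinequality}/\eqref{eqn:inverseinequality0} elementwise in the form $(\theta h)^{-1/2}\|v_h\|_{L_2(Q)} \le (\theta h)^{-1/2}\big(\sum_K h_K^2 C_{inv,1}^{-2}\|\nabla_x v_h\|_{L_2(K)}^2\big)^{1/2}$ — again wrong direction. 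So the main obstacle I anticipate is precisely pinning down how the first term is controlled by $\|v_h\|_h$ rather than $\|v_h\|_{h,*}$ on the test side; the likely device is that $\|\cdot\|_{h,*}$ and $\|\cdot\|_h$ are \emph{equivalent on the finite-dimensional space $V_{0h}$} by the inverse inequality $(\theta h)^{-1/2}\|v_h\|_{L_2(Q)} \le C\,\theta^{-1}\|\nabla_x v_h\|_{L_2(Q)}$ (from \eqref{eqn:inverseinequality}, since $h/h_K\le C_u$ gives $(\theta h)^{-1}\|v_h\|_{L_2(K)}^2 \le (\theta h)^{-1}h_K^2 C_{inv,1}^{-2}\|\nabla v_h\|^2 \le$ ... no). Setting that aside, the structure of the argument is clear: bound all four terms, use Cauchy--Schwarz, the quasi-uniformity \eqref{equ:quasiUniformAssumption}, and the inverse inequalities of Lemma~\ref{lem:InverseInequalities} (especially \eqref{eqn:inverseinequality1}) on the last term, and take $\mu_b$ to be an explicit expression such as $\mu_b = 3 + \theta C_u C_{inv,1}$ (or similar, collecting the constants from each term).
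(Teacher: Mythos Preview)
Your handling of the second, third, and fourth terms is correct and matches the paper's argument (Cauchy--Schwarz, then the inverse inequality \eqref{eqn:inverseinequality1} elementwise together with quasi-uniformity \eqref{equ:quasiUniformAssumption} for the fourth term). The gap is precisely where you flagged it: the first term $\int_Q \partial_t u\,v_h\,dxdt$. Your attempts to bound it via $\|\partial_t u\|_{L_2(Q)}\|v_h\|_{L_2(Q)}$ and then control $(\theta h)^{-1/2}\|v_h\|_{L_2(Q)}$ by $\|v_h\|_h$ cannot work --- the inverse inequality goes the wrong way, as you noticed, and $\|\cdot\|_{h,*}$ is genuinely stronger than $\|\cdot\|_h$ on $V_{0h}$.

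The missing idea is to \emph{integrate by parts in time} before applying Cauchy--Schwarz. Since $v_h\in V_{0h}\subset H^{1,1}_{0,\underline{0}}(Q)$ vanishes on $\Sigma_0$ and $n_t=0$ on $\Sigma$, one has
\[
\int_Q \partial_t u\,v_h\,dxdt = -\int_Q u\,\partial_t v_h\,dxdt + \int_{\Sigma_T} u\,v_h\,ds.
\]
Now Cauchy--Schwarz gives
\[
\big[(\theta h)^{-1/2}\|u\|_{L_2(Q)}\big]\big[(\theta h)^{1/2}\|\partial_t v_h\|_{L_2(Q)}\big] + \|u\|_{L_2(\Sigma_T)}\|v_h\|_{L_2(\Sigma_T)},
\]
and every factor sits inside the correct norm: $(\theta h)^{-1}\|u\|_{L_2(Q)}^2$ and $\|u\|_{L_2(\Sigma_T)}^2$ are pieces of $\|u\|_{h,*}^2$, while $\theta h\|\partial_t v_h\|_{L_2(Q)}^2$ and $\|v_h\|_{L_2(\Sigma_T)}^2$ are pieces of $\|v_h\|_h^2$. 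This is exactly why the $(\theta h)^{-1}\|u\|_{L_2(Q)}^2$ term appears in the definition \eqref{eqn:discretenorm*} of $\|\cdot\|_{h,*}$. Collecting all four terms then yields $\mu_b = \sqrt{2\max\{(1+C_u^2C_{inv,1}^2\theta^2)/2,\,1\}}$.
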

\begin{proof}
Let us estimate the bilinear form \eqref{eqn:bilinearform}
\begin{equation*}
 a_h(u,v_h) =   \int_{Q}  (\partial_{t} u v_h + \theta h \partial_{t} u \partial_{t} v_h  \notag \\
 + \nabla_x u \cdot \nabla_x  v_h + \theta h  \nabla_x  u \cdot \nabla_x \partial_{t} v_h)\, dxdt,
\end{equation*}
term by term.
 
For the first term, 
since  
$V_{0h} \subset H^{1,1}_{0,\underline{0}},$ 
we can proceed with an integration by parts wrt $t$, 
and then estimate the resulting terms by Cauchy-Schwarz inequality as follows:
{\allowdisplaybreaks
\begin{align*}
\int_{Q}  \partial_{t} u_h v_h \,dx dt   
   = &  - \int_Q  u \partial_{t} v_h \,dx dt + \int_{\Sigma_T}  u  v_h \,ds \\ 
   \leq &  \left[ (\theta h)^{-1}  \|u \|^2_{L_2(Q)} \right]^{\frac{1}{2}}  
      \left[ \theta h \|\partial_{t}  v_h \|^2_{L_2(Q)} \right]^{\frac{1}{2}} \\
   & + \left[ \|u \|^2_{L_2(\Sigma_T)} \right]^{\frac{1}{2}}  
      \left[  \|v_h \|^2_{L_2(\Sigma_T)} \right]^{\frac{1}{2}} .
\end{align*}
}

The second and third term of \eqref{eqn:bilinearform} 
can easily be bounded by means of Cauchy's inequality as follows:
\begin{align*}
  \theta h \int_{Q}  \partial_{t} u \partial_{t} v_h   \,dx dt 
  & \leq \left[ \theta h \|\partial_{t} u \|^2_{L_2(Q)} 
  \right]^{\frac{1}{2}} \left[\theta h \|\partial_{t} v_h \|^2_{L_2(Q)} \right]^{\frac{1}{2}}.  
\end{align*}
and
\begin{align*}
  \int_Q \nabla_x u \cdot \nabla_x  v_h  \,dx dt 
  & \leq \left[\|\nabla_{x} u \|^2 _{L_2(Q)}\right]^{\frac{1}{2}} 
    \left[ \|\nabla_{x} v_h \|^2 _{L_2(Q)}\right]^{\frac{1}{2}}.
\end{align*}
The final term in the bilinear form is bounded from above by applying the Cauchy-Schwarz inequality,
inverse inequality \eqref{eqn:inverseinequality1}, 
and inequality \eqref{equ:quasiUniformAssumption}
to obtain
   \begin{align*}
    \theta h  \int_{Q} \nabla_x  u_h & \cdot \nabla_x \partial_{t} v_h\, dxdt 
      \leq \left[ \|\nabla_{x} u \|^2 _{L_2(Q)}\right]^{\frac{1}{2}}\left[(\theta h)^2 \| \partial_t \nabla_{x}v_h \|^2_{L_2(Q)} \right]^{\frac{1}{2}} \\
     & =  \left[ \|\nabla_{x} u \|^2 _{L_2(Q)}\right]^{\frac{1}{2}}\left[(\theta h)^2 \sum_{i=1}^{d} 
     			\sum_{K \in \mathcal{K}_h} \| \partial_t (\partial_{x_i} v_h) \|^2_{L_2(K)} \right]^{\frac{1}{2}}\\
     & \leq  \left[ \|\nabla_{x} u \|^2 _{L_2(Q)}\right]^{\frac{1}{2}}\left[(\theta h)^2 C_{inv,1}^2 
     	\sum_{i=1}^{d} \sum_{K \in \mathcal{K}_h} h_K^{-2} \| \partial_{x_i} v_h \|^2_{L_2(K)} \right]^{\frac{1}{2}} \\     
     & \le  \left[  \|\nabla_{x} u \|^2 _{L_2(Q)}\right]^{\frac{1}{2}}\left[C_u^2C_{inv,1}^2 \theta^2 \|\nabla_{x} v_h \|^2_{L_2(Q)} \right]^{\frac{1}{2}}. 
  \end{align*}
%
Combining the terms from above and using Cauchy's inequality, we get
\begin{align*}
a_h(u_h,v_h) &\leq \bigg[(\theta h)^{-1}  \|u \|^2_{L_2(Q)} +  \|u \|^2_{L_2(\Sigma_T)} \\  
  & + \theta h \|\partial_t u \|^2_{L_2(Q)} + \| \nabla_x u \|^2_{L_2(Q)} + 
      \| \nabla_x u \|^2_{L_2(Q)} \bigg]^{\frac{1}{2}} \\
  & \times \bigg[\theta h \|\partial_t v_h \|^2_{L_2(Q)} +  \|v_h \|^2_{L_2(\Sigma_T)}  + \theta h \|		\partial_t v_h \|^2_{L_2(Q)} \\
  &  + \| \nabla_x v_h \|^2_{L_2(Q)} +  C_u^2C_{inv,1}^2 \theta^2 \| \nabla_x v_h \|^2_{L_2(Q)} \bigg]^{\frac{1}{2}}\\
  & \leq \bigg[ 2\| \nabla_x u \|^2_{L_2(Q)} + \theta h \|\partial_t u \|^2_{L_2(Q)}   
  + (\theta h)^{-1}  \|u \|^2_{L_2(Q)} + \|u \|^2_{L_2(\Sigma_T)} \bigg]^{\frac{1}{2}} \\
  & \times \bigg[ (1+ C_u^2C_{inv,1}^2 \theta^2)\| \nabla_x v_h \|^2_{L_2(Q)} + 2\theta h \|\partial_t v_h \|^2_{L_2(Q)}  +  \|v_h \|^2_{L_2(\Sigma_T)}\bigg]^{\frac{1}{2}} \\ 
  & \leq \mu_b \|u\|_{h,*}\|v_h\|_{h},
\end{align*}
where $\mu_b = (2\max\{(1+C_u^2C_{inv,1}^2\theta^2)/2, 1\})^{1/2}.$ 
\qed
\end{proof}

Next we recall some approximation properties of our B-spline resp. NURBS space
that follow from the approximation results proved in 
\cite[Section~3]{LMN:BazilevsBeiraoCottrellHughesSangalli:2006a} 
and \cite[Section~3]{LMN:TagliabueDedeQuarteroni:2014a}.
Indeed, there exists projective operators $\Pi_h: L_2(Q) \rightarrow V_{h}$
that deliver the corresponding asymptotically optimal approximation results.

\begin{lemma}
\label{lem:InterpolationErrorEstimate}
Let $l$ and $s$ be  integers with $0\le l \le s \le p+1$,
and let $v \in H^s(Q) \cap H^{1,1}_{0,\underline{0}}(Q)$.
Then there exist a projective operator $\Pi_{h}$ from $H^{1,1}_{0,\underline{0}}(Q) \cap H^s(Q)$ 
to $V_{0h}$ and a positive generic constant $C_s$ such that 
\begin{align}
\label{eqn:InterpolationErrorEstimate}
  \sum_{K \in \mathcal{K}_{h}}|v - \Pi_{h}v|^{2}_{H^{l}(K)} 
  & \leq C_s  h^{2(s-l)} \|v \|^2_{H^{s}(Q)}, 
\end{align} 
%
where $h$ again denotes the mesh-size parameter in the physical domain,
$p$ denotes the underlying polynomial degree of the B-spline resp.  NURBS,
and the generic constant $C_s$ only depends on $l,s$ and $p$, the shape regularity of the 
physical space-time domain $Q$ described by the mapping $\Phi$
and, in particular, the gradient $\nabla \Phi$ of the mapping $\Phi$,
but not on $h$ and $v$.
\end{lemma}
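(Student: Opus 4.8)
The plan is to transport everything to the parameter domain $\widehat{Q}=(0,1)^{d+1}$, invoke the known approximation theory for tensor-product B-spline/NURBS spaces there, and push the result back to $Q$ through the geometric map $\Phi$.

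First I would set $\widehat{v} := v\circ\Phi \in H^{s}(\widehat{Q})$. Since $\Phi$ is smooth and uniformly regular, the chain rule yields, on each parametric element $\widehat{K}$ and its image $K=\Phi(\widehat{K})$, a two-sided equivalence of the seminorms $|\cdot|_{H^{m}(K)}$ and $|\cdot|_{H^{m}(\widehat{K})}$ for $0\le m\le s$, with constants depending only on $l,s$ and on $\nabla\Phi$ together with its higher derivatives up to order $s-1$; this is precisely where the smoothness and uniform regularity of $\Phi$ enter. Note also that $v\in H^{1,1}_{0,\underline{0}}(Q)$ translates into $\widehat{v}$ vanishing on the corresponding faces of $\partial\widehat{Q}$, namely the lateral (spatial) faces and the bottom face $\{\xi_{d+1}=0\}$.

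Next, on $\widehat{Q}$ I would use a boundary-compatible quasi-interpolation operator $\widehat{\Pi}_{h}$ onto the parametric NURBS space $\mathrm{span}\{\widehat{R}_{{i},{p}}\}$ constructed in Section~3 of the references cited above, which satisfies the local estimate $|\widehat{v}-\widehat{\Pi}_{h}\widehat{v}|_{H^{l}(\widehat{K})}\le C\,\widehat{h}_{\widehat{K}}^{\,s-l}\,\|\widehat{v}\|_{H^{s}(\widetilde{\widehat{K}})}$, where $\widetilde{\widehat{K}}$ denotes the support extension of $\widehat{K}$. Because $\widehat{v}$ vanishes on the relevant faces of $\partial\widehat{Q}$ and the boundary coefficients of $\widehat{\Pi}_{h}\widehat{v}$ are built from data of $\widehat{v}$ on those faces, the quasi-interpolant inherits the homogeneous boundary and initial conditions, so $\widehat{\Pi}_{h}\widehat{v}$ lies in the constrained parametric space and hence $\Pi_{h}v:=(\widehat{\Pi}_{h}\widehat{v})\circ\Phi^{-1}\in V_{0h}$. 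Summing the squared local estimates over $\widehat{K}\in\widehat{\mathcal{K}}_{h}$ and using that each support extension meets only a bounded number of elements, depending on $p$, gives $\sum_{\widehat{K}}|\widehat{v}-\widehat{\Pi}_{h}\widehat{v}|^{2}_{H^{l}(\widehat{K})}\le C\,\widehat{h}^{\,2(s-l)}\,\|\widehat{v}\|^{2}_{H^{s}(\widehat{Q})}$.

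Finally I would map back: applying the norm equivalences element by element, using the relation $h_{K}=\|\nabla\Phi\|_{L_{\infty}(K)}\widehat{h}_{\widehat{K}}$ together with the quasi-uniformity assumption \eqref{equ:quasiUniformAssumption} to replace $\widehat{h}$ by $h$ up to constants, and absorbing everything into a single constant $C_{s}$ depending on $l,s,p$ and on $\nabla\Phi$, yields $\sum_{K\in\mathcal{K}_{h}}|v-\Pi_{h}v|^{2}_{H^{l}(K)}\le C_{s}\,h^{2(s-l)}\|v\|^{2}_{H^{s}(Q)}$, which is the assertion. I expect the main obstacle to be the bookkeeping that $\Pi_{h}$ really maps into $V_{0h}$ rather than merely into $V_{h}$, i.e. verifying that the boundary-compatible variant of $\widehat{\Pi}_{h}$ still enjoys the optimal local estimates near $\partial\widehat{Q}$; a secondary technical point is controlling the higher-order chain-rule terms in the change of variables when $l\ge 2$, which is exactly why uniform regularity of $\Phi$, and not just invertibility, is assumed.
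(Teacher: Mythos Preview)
Your proposal is correct and follows essentially the same approach as the paper: the paper's proof consists entirely of a reference to the approximation results in Subsections~3.3 and~3.4 of Bazilevs, Beir\~ao da Veiga, Cottrell, Hughes, and Sangalli (2006) and Proposition~3.1 of Tagliabue, Ded\`e, and Quarteroni (2014), and what you have written is precisely an outline of how those results are established---pull back to the parameter domain, apply a boundary-compatible quasi-interpolant with local support-extension estimates, use finite overlap, and push forward via the norm equivalences coming from the uniform regularity of $\Phi$. You have in fact supplied more detail than the paper itself, including the correct identification of the two delicate points (boundary compatibility so that $\Pi_h$ lands in $V_{0h}$, and control of the chain-rule terms for $l\ge 2$).
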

\begin{proof}
%
The proof follows the proof of the approximation results presented 
in Subsections 3.3 and 3.4 of \cite{LMN:BazilevsBeiraoCottrellHughesSangalli:2006a},
see also \cite[Proposition~3.1]{LMN:TagliabueDedeQuarteroni:2014a}.
\qed
\end{proof}
If $\Pi_{h}v$ belongs to $V_{0h} \cap H^{l}(Q)$ (the multiplicity of the inner knots 
is not larger than $p+1-l$), then estimate \eqref{eqn:InterpolationErrorEstimate} 
immediately yields the global estimate 
\begin{align}
\label{eqn:InterpolationErrorEstimateQ}
   |v - \Pi_{h}v|_{H^{l}(Q)}  
  & \leq C_s^{0.5}  h^{(s-l)} \|v \|_{H^{s}(Q)}.
\end{align} 
%

%
The basic approximation estimates \eqref{eqn:InterpolationErrorEstimate} 
and \eqref{eqn:InterpolationErrorEstimateQ} 
yield estimates of the approximation error $v - \Pi_{h}v$ 
wrt to the $\| \cdot \|_{L_2(\partial Q)}$ - norm as well as 
wrt to the discrete norms $\| \cdot \|_{h}$ and $\| \cdot \|_{h,*}$
which we later need  in  estimation of the discretization error $u-u_h$.
{\allowdisplaybreaks\  
\begin{lemma}
\label{lem:NormInterpolationEstimate}
{\color{black} 
Let $s$ be a positive integer with $1 \le s \le p+1$,
and let $v \in H^s(Q) \cap H^{1,1}_{0,\underline{0}}(Q)$.
Then there exist a projection $\Pi_{h}$ from $H^{1,1}_{0,\underline{0}}(Q)$ 
to $V_{0h}$ and generic positive constants $C_1$, $C_2$ and $C_3$ such that 
\begin{align}
      \|v - \Pi_{h}v \|_{L_2(\partial Q)}  & \leq C_1 h^{s- 1/2} \|v\|_{H^{s}(Q)} 				\label{eqn:BoundaryInterpolation},\\
      \|v - \Pi_{h}v\|_h  & \leq C_2 h^{s - 1} \|v\|_{H^{s}(Q)}, 
	\label{eqn:EnergyNormInterpolation} \\
      \|v - \Pi_{h}v\|_{h,*}  & \leq C_3 h^{s - 1} \|v\|_{H^{s}(Q)}, 	\label{eqn:EnergyNormInterpolation*}
 \end{align} 
where $h$ is the mesh-size parameter in the physical domain,
$p$ denotes the underlying polynomial degree of the B-spline resp.  NURBS,
and the generic constants $C_1$, $C_2$ and $C_3$ 
only depends on $l,s$ and $p$, the shape regularity of the 
physical space-time domain $Q$ described by the mapping $\Phi$
and, in particular, the gradient $\nabla \Phi$ of the mapping $\Phi$,
but not on $h$ and $v$.
}
\end{lemma}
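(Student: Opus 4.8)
The plan is to derive each of the three estimates \eqref{eqn:BoundaryInterpolation}--\eqref{eqn:EnergyNormInterpolation*} directly from the element-wise approximation result \eqref{eqn:InterpolationErrorEstimate} of Lemma~\ref{lem:InterpolationErrorEstimate}, combined with the scaled trace inequality of Lemma~\ref{lem:ContinuoustraceInequality} and the quasi-uniformity assumption \eqref{equ:quasiUniformAssumption}. Throughout I write $e_h := v - \Pi_h v$ and use that, by \eqref{eqn:InterpolationErrorEstimate}, for every integer $l$ with $0 \le l \le s$ one has $\sum_{K} |e_h|_{H^l(K)}^2 \le C_s h^{2(s-l)} \|v\|_{H^s(Q)}^2$.

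\textbf{Boundary estimate \eqref{eqn:BoundaryInterpolation}.} First I would apply the scaled trace inequality \eqref{eqn:ContinuoustraceInequality} on each element $K$ to $e_h \in H^1(K)$, giving
\begin{align*}
  \|e_h\|_{L_2(\partial Q)}^2 \le \sum_{K \in \mathcal{K}_h} \|e_h\|_{L_2(\partial K)}^2
  \le C_{tr}^2 \sum_{K} h_K^{-1}\big(\|e_h\|_{L_2(K)} + h_K |e_h|_{H^1(K)}\big)^2.
\end{align*}
Using $(a+b)^2 \le 2a^2 + 2b^2$, quasi-uniformity $h_K \le h \le C_u h_K$ to replace $h_K$ by $h$ up to constants, and then the cases $l=0$ and $l=1$ of \eqref{eqn:InterpolationErrorEstimate}, the right-hand side is bounded by $C h^{-1}\big(h^{2s} + h^{2}\,h^{2(s-1)}\big)\|v\|_{H^s(Q)}^2 = C h^{2s-1}\|v\|_{H^s(Q)}^2$. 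Taking square roots yields \eqref{eqn:BoundaryInterpolation} with $C_1$ depending only on $C_{tr}, C_u, C_s$.

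\textbf{Energy-norm estimates \eqref{eqn:EnergyNormInterpolation} and \eqref{eqn:EnergyNormInterpolation*}.} By definition \eqref{eqn:discretenorm},
\begin{align*}
  \|e_h\|_h^2 = \|\nabla_x e_h\|_{L_2(Q)}^2 + \theta h \|\partial_t e_h\|_{L_2(Q)}^2 + \tfrac{1}{2}\|e_h\|_{L_2(\Sigma_T)}^2.
\end{align*}
The first two terms are each controlled by $|e_h|_{H^1(Q)}^2 \le C_s h^{2(s-1)}\|v\|_{H^s(Q)}^2$ (the $l=1$ case of \eqref{eqn:InterpolationErrorEstimate}), the $\theta h$ factor only improving the power of $h$; the third term is $\le \|e_h\|_{L_2(\partial Q)}^2 \le C_1^2 h^{2s-1}\|v\|_{H^s(Q)}^2$ by the estimate just proved, and since $s \ge 1$ we have $2s-1 \ge 2(s-1)$, so this term is of higher order. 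Collecting gives $\|e_h\|_h \le C_2 h^{s-1}\|v\|_{H^s(Q)}$. For \eqref{eqn:EnergyNormInterpolation*}, by \eqref{eqn:discretenorm*} it remains only to bound the extra term $(\theta h)^{-1}\|e_h\|_{L_2(Q)}^2 \le (\theta h)^{-1} C_s h^{2s}\|v\|_{H^s(Q)}^2 = C h^{2s-1}\|v\|_{H^s(Q)}^2$ (the $l=0$ case), again of order $h^{2s-1} \ge h^{2(s-1)}$; adding this to the bound for $\|e_h\|_h^2$ gives \eqref{eqn:EnergyNormInterpolation*}.

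The only subtle point, and the one I would state carefully, is that the $L_2(\Sigma_T)$-type and $L_2(Q)$-type contributions carry the power $h^{s-1/2}$ or $h^{s}$ rather than the limiting $h^{s-1}$, so they do not spoil the overall rate $h^{s-1}$; in particular the estimates are sharp precisely because of the $\|\nabla_x e_h\|_{L_2(Q)}$ term. I do not expect any genuine obstacle: the argument is a routine combination of Lemmas~\ref{lem:ContinuoustraceInequality} and \ref{lem:InterpolationErrorEstimate} with \eqref{equ:quasiUniformAssumption}, and one must only take care that $\Pi_h v \in V_{0h}$ (so that $e_h$ vanishes on $\Sigma \cup \Sigma_0$ and the boundary integral over $\partial Q$ effectively reduces to $\Sigma_T$), which is exactly what Lemma~\ref{lem:InterpolationErrorEstimate} provides under the stated knot-multiplicity condition.
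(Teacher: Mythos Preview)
Your proposal is correct and follows essentially the same route as the paper: the trace inequality of Lemma~\ref{lem:ContinuoustraceInequality} together with quasi-uniformity \eqref{equ:quasiUniformAssumption} and the cases $l=0,1$ of Lemma~\ref{lem:InterpolationErrorEstimate} give \eqref{eqn:BoundaryInterpolation}, and then \eqref{eqn:EnergyNormInterpolation} and \eqref{eqn:EnergyNormInterpolation*} follow term by term from the definitions of the norms. The only cosmetic difference is that the paper restricts the element sum for the boundary estimate to those $K$ with $\partial K \cap \partial Q \neq \emptyset$, whereas you sum over all elements; this is harmless, and your closing remark about $e_h$ vanishing on $\Sigma\cup\Sigma_0$ is correct but not actually needed for the argument.
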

\begin{proof}
{\color{black}
By applying inequality \eqref{eqn:ContinuoustraceInequality}, 
the quasi uniformity assumption \eqref{equ:quasiUniformAssumption}
and Lemma~\ref{lem:InterpolationErrorEstimate},  
the proof of \eqref{eqn:BoundaryInterpolation} is obtained as follows:
\begin{align*}
   \|v - \Pi_{h}v \|_{L_2(\partial Q)}^2 
    & = \sum_{ \substack{K \in \mathcal{K}_{h} \\ \partial K \cap \partial Q \neq \emptyset } } 
	\|v - \Pi_{h}v\|_{L_{2}(\partial K \cap \partial Q)}^2 \\
    & \leq 2 C_{tr}^2  \sum_{K \in \mathcal{K}_{h} }
               \left(  h_{K}^{-1}\|v - \Pi_{h}v\|_{L_{2}(K)}^2 
		  +  h_{K} |v - \Pi_{h}v |_{H^{1}(K)}^2 \right)\\
    & \leq 2 C_{tr}^2  \sum_{K \in \mathcal{K}_{h} }
               \left( C_u h^{-1}\|v - \Pi_{h}v\|_{L_{2}(K)}^2 
		  +  h |v - \Pi_{h}v |_{H^{1}(K)}^2 \right)\\
    & \leq 2 C_{tr}^2  \left( C_u h^{-1}\|v - \Pi_{h}v\|_{L_{2}(Q)}^2 
		  +  h |v - \Pi_{h}v |_{H^{1}(Q)}^2 \right)\\  
     & \leq 2 C_{tr}^2  \left( C_u h^{-1} C_s h^{2s}
		  +  h C_s h^{2(s-1)} \right)\| v\|^2_{H^{s}(Q)}\\                        
 & \leq 2 C_{tr}^2 C_s (C_u+1) h^{2s-1}\| v\|^2_{H^{s}(Q)}
    = C_1 h^{2s-1} \| v\|^2_{H^{s}(Q)}.
\end{align*}
The definition \eqref{eqn:discretenorm} of the norm $\|\cdot\|_h$,
the approximation error estimate \eqref{eqn:InterpolationErrorEstimate} for $l=1$, 
and the estimate \eqref{eqn:BoundaryInterpolation} just proved yield
\begin{align*}
\|v - \Pi_{h}v\|_h^{2} = & \|\nabla_x(v - \Pi_{h}v)\|^{2}_{L_2(Q)}  
			    + \theta h\|\partial_t(v - \Pi_{h}v)\|^{2}_{L_2(Q)} \\
                         & + \frac{1}{2}\|v - \Pi_{h}v\|^{2}_{L_2(\Sigma_T)}\\
		    \leq & \left( C_s h^{2(s-1)} + 
			    \theta h C_s h^{2(s-1)}+1/2hC_1^2 h h^{2(s-1)} \right) 
			    \| v\|^2_{H^{s}(Q)}\\
                       = & (C_s+C_s \theta h + \frac{C_1}{2} h^2) h^{2(s-1)} \| v\|^2_{H^{s}(Q)} = C_2 h^{2(s-1)}\| v\|^2_{H^{s}(Q)},
\end{align*}
which proves the second estimate of Lemma~\ref{lem:NormInterpolationEstimate}.
Now let us prove the last estimate. Using the definition of 
the norm \eqref{eqn:discretenorm*}, the just proven estimate  \eqref{eqn:EnergyNormInterpolation},
and the approximation error estimate \eqref{eqn:InterpolationErrorEstimate} for $l=0$,
we get
{\allowdisplaybreaks
\begin{align*}
\|v - \Pi_{h}v\|_{h,*}^2  
    & = \|v - \Pi_{h}v\|_h^{2}  + (\theta h)^{-1} \|v - \Pi_{h}v\|_{L_2(Q)}^2 \\
    & \leq \left( C_2 h^{2(s-1)}  + (\theta h)^{-1}C_s h^{2s} \right)\| v\|^2_{H^{s}(Q)} \\
    & = \left(C_2 + \theta^{-1} h C_s\right) h^{2(s-1)} \| v\|^2_{H^{s}(Q)} 
                            = C_3  h^{2(s-1)} \| v\|^2_{H^{s}(Q)},
\end{align*}
}
which completes the proof of the lemma. \qed
}

\end{proof}
}

\begin{lemma}
\label{lem:consistency2}
  If the solution $u \in H^{1,0}_0(Q)$  of the variational problem 
  \eqref{eqn:VariationalFormulation} belongs to $H^{2,1}(Q)$,
  then it satisfies the consistency identity
  \begin{equation}
    \label{eqn:consistency}
    a_h(u,v_h) = l_h(v_h) \quad \forall \; v_h \in V_{0h}.
  \end{equation}
\end{lemma}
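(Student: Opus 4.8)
The plan is to reverse the formal derivation of the discrete scheme carried out in Section~\ref{Subsec:DiscreteVariationalFormulation}: there, time--upwind testing was applied to an element of $V_{0h}$, whereas here it is applied to the exact solution $u$, and the extra regularity $u\in H^{2,1}(Q)$ is precisely what makes every manipulation rigorous.

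First I would show that a weak solution $u\in H^{1,0}_0(Q)\cap H^{2,1}(Q)$ of \eqref{eqn:VariationalFormulation} satisfies the parabolic equation in the strong sense, i.e.\ $\partial_t u-\Delta u=f$ a.e.\ in $Q$. Since $u\in H^{2,1}(Q)$, one may integrate by parts in $t$ in the term $-\int_Q u\,\partial_t v\,dxdt$ of $a(u,v)$ — the boundary contribution on $\Sigma_T$ drops because $v=0$ there for $v\in H^{1,1}_{0,\overline{0}}(Q)$, and the one on $\Sigma$ drops since $v=0$ on $\Sigma$ — and integrate by parts in $x$ in $\int_Q\nabla_x u\cdot\nabla_x v\,dxdt$, whose boundary contribution is supported on $\Sigma$ (where $v=0$) and on $\Sigma_0\cup\Sigma_T$ (where the spatial normal component $n_x$ vanishes). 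This rewrites \eqref{eqn:VariationalFormulation} as
\[
\int_Q(\partial_t u-\Delta u-f)\,v\,dxdt+\int_{\Sigma_0}(u-u_0)\,v\,ds=0\qquad\forall\,v\in H^{1,1}_{0,\overline{0}}(Q),
\]
and testing with $v\in C_c^\infty(Q)$ yields the pointwise equation (testing then with general $v$ recovers $u=u_0$ on $\Sigma_0$ in the trace sense, which is consistent with the data but is not needed below).

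Next, I would fix an arbitrary $v_h\in V_{0h}$ and multiply $\partial_t u-\Delta u=f$ by the time--upwind test function $v_h+\theta h\,\partial_t v_h$, which is admissible since the tensor--product structure of $V_{0h}$ guarantees that $\partial_t v_h$ has square-integrable spatial derivatives. Integrating over $Q$ and then integrating by parts in $x$ in the term containing $\Delta u$ — legitimate because $\nabla_x u$ has square-integrable spatial derivatives — produces the volume term $\int_Q\nabla_x u\cdot\nabla_x(v_h+\theta h\,\partial_t v_h)\,dxdt$ plus a boundary term $-\int_{\partial Q}(n_x\cdot\nabla_x u)(v_h+\theta h\,\partial_t v_h)\,ds$. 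The latter vanishes because $v_h=0$, and hence $\partial_t v_h=0$, on $\Sigma$, while $n_x=0$ on $\Sigma_0\cup\Sigma_T$. Collecting the remaining volume integrals reproduces exactly $a_h(u,v_h)$ as in \eqref{eqn:bilinearform}, while the right-hand side is $l_h(v_h)$ as in \eqref{eqn:linearform}; this is the asserted identity \eqref{eqn:consistency}.

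The only genuinely delicate point is the first step, namely verifying that the $H^{2,1}$--regular weak solution satisfies the PDE pointwise (which is where the regularity hypothesis is used); everything else merely repeats the integration-by-parts computations that motivated $a_h(\cdot,\cdot)$ and $l_h(\cdot)$ in \eqref{eqn:bilinearform}--\eqref{eqn:linearform}, and requires no additional estimates.
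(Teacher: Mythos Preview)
Your proposal is correct and follows essentially the same route as the paper's proof: first recover the strong form $\partial_t u-\Delta u=f$ in $L_2(Q)$ from the variational formulation via integration by parts and a density argument with $C_c^\infty(Q)$, then multiply by the upwind test function $v_h+\theta h\,\partial_t v_h$, integrate by parts in $x$, and discard the boundary term using $v_h=\partial_t v_h=0$ on $\Sigma$ together with $n_x=0$ on $\Sigma_0\cup\Sigma_T$. The paper also records that $u=u_0$ on $\Sigma_0$ along the way, but, as you note, this is not needed for the consistency identity itself.
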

%
\begin{proof} 
Since $u \in H^{1,0}_0(Q) \cap H^{2,1}(Q)$, integration by parts in \eqref{eqn:VariationalFormulation}
wrt $t$ and $x$ gives the variational identity
 \begin{align}
  \label{eqn:consistencyproof}
  \int_Q f v \,dx dt &+ \int_{\Sigma_0} u_0 v \,ds =   \int_Q  (\partial_t u - \Delta u )v \,dx dt - \int_{\partial Q}  n_t u v \,ds + \int_{\partial Q} n_x \cdot \nabla_x u v \,ds \notag \\
         & = \int_Q  (\partial_t u - \Delta u )v \,dx dt - \int_{\Sigma_T} u v\, ds + \int_{\Sigma_0} u v\, ds + \int_{\partial Q} n_x \cdot \nabla_x u v \, ds \notag\\
          & = \int_Q  (\partial_t u - \Delta u )v \,dx dt 
	      + \int_{\Sigma_0} u v\, ds 
	      \quad \forall v \in H^{1,1}_{0,{\overline 0}}(Q).
 \end{align}
%
If we take a test function $v \in C^{\infty}_0(Q) \subset H^{1,1}_{0,{\overline 0}}(Q)$, then it follows that
 \begin{align*}
  \int_Q f v \,dx dt =   \int_Q  (\partial_t u - \Delta u )v \,dx dt.
 \end{align*}
Since $C^{\infty}_0(Q)$ is dense in $L_2(Q)$, we have 
 \begin{align*}
  \partial_t u - \Delta u = f   \quad \text{in} \quad  L_2(Q).
 \end{align*}
From \eqref{eqn:consistencyproof}, we now conclude that 
\begin{align*}
   \int_{\Sigma_0} u_0 v \,ds    =  
    \int_{\Sigma_0} u v\,ds
\end{align*}
for all $v \in H^{1,1}_{0,{\overline 0}}(Q)$, i.e., $u = u_0$ on $\Sigma_0$.
Since $u \in H^{1,0}_0(Q)$, the Dirichlet trace of $u$ on $\Sigma$ is zero.
This gives us the strong form of our model problem \eqref{eqn:ModelProblem}.

Now, multiplying 
$\partial_t u - \Delta u = f$
by a test function $v_h + \theta h \partial_t v_h$,
where  $v_h \in V_{0h}$, integrating over $Q$, we get    
 \begin{align*}
 \int_Q  (\partial_t u - \Delta u )(v_h + \theta h \partial_t v_h ) \,dx dt 
=\int_Q f (v_h + \theta h \partial_t v_h ) \,dx dt 
\quad \forall v_h \in V_{0h}.
 \end{align*}
Integrating by parts wrt $x$ gives
\begin{align*}
 &\int_Q  \partial_t u (v_h + \theta h \partial_t v_h ) \,dx dt + \nabla_x u \cdot \nabla_x (v_h + \theta h \partial_t v_h ) \,dx dt \\
 & - \int_{\partial Q} n_x \cdot \nabla_x u (v_h + \theta h \partial_t v_h)\, ds =\int_Q f (v_h + \theta h \partial_t v_h ) \,dx dt. 
\end{align*}
 Using the fact that $n_x = 0$ on both $\Sigma_0$ and $\Sigma_T$, and $v_h \in V_{0h}$, we have 
 \begin{align*}
 \int_Q  \partial_t u (v_h + \theta h \partial_t v_h ) \,dx dt &+ \nabla_x u \cdot \nabla_x (v_h + \theta h \partial_t v_h ) \,dx dt \\
  &=\int_Q f (v_h + \theta h \partial_t v_h ) \,dx dt,
 \end{align*}
 which completes the proof.
 \qed
\end{proof}

Now we are in the position to prove the main result for this section,
namely the a priori discretization error estimate 
in the discrete norm $\|\cdot\|_h$.
\begin{theorem}
\label{thm:EnergyNormErrorEstimate}
  Let $u \in H^{1,0}_0(Q) \cap H^{s}(Q)$ with  $s \geq 2$
  be the exact solution of our model problem \eqref{eqn:VariationalFormulation},
  and let $u_h \in V_{0h}$ be the solution to 
  the IgA scheme
  \eqref{eqn:DiscreteVariationalFormulation}.
  Then the discretization error estimate
  \begin{equation}
  \label{eqn:EnergyNormErrorEstimate}
    \|u - u_h\|_h \leq Ch^{t- 1} \|u\|_{ H^{t}(Q)}, 
  \end{equation}
  holds, where $C$ is a generic positive constant, $t = \min\{s,p+1\}$, 
  and $p$ denotes the underlying polynomial degree of the B-splines or  NURBS.
\end{theorem}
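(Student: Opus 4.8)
The plan is to run the standard Strang-type argument for nonconforming/Galerkin schemes that are coercive and bounded in mismatched norms. I would start from the triangle inequality, splitting the error through the projection $\Pi_h u \in V_{0h}$ furnished by Lemma~\ref{lem:InterpolationErrorEstimate} and Lemma~\ref{lem:NormInterpolationEstimate}: write $\|u-u_h\|_h \le \|u-\Pi_h u\|_h + \|\Pi_h u - u_h\|_h$. The first summand is already controlled by \eqref{eqn:EnergyNormInterpolation}, giving a term of order $h^{t-1}\|u\|_{H^t(Q)}$. It remains to bound the discrete quantity $w_h := \Pi_h u - u_h \in V_{0h}$.

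For $w_h$ I would use coercivity (Lemma~\ref{lem:Coercivity}) to get $\|w_h\|_h^2 \le a_h(w_h,w_h)$, then rewrite $a_h(w_h,w_h) = a_h(\Pi_h u - u, w_h) + a_h(u - u_h, w_h)$. The second piece vanishes by the consistency identity (Lemma~\ref{lem:consistency2}): since $u \in H^{1,0}_0(Q)\cap H^{s}(Q) \subset H^{2,1}(Q)$ because $s\ge 2$, we have $a_h(u,v_h)=l_h(v_h)=a_h(u_h,v_h)$ for all $v_h\in V_{0h}$, so $a_h(u-u_h,w_h)=0$. For the first piece I would invoke boundedness on $V_{0h,*}\times V_{0h}$ (Lemma~\ref{lem:boundedness}): note that $u-\Pi_h u \in V_{0h,*}$ (it lies in $H^{1,0}_0(Q)\cap H^{2,1}(Q) + V_{0h}$), so $|a_h(\Pi_h u - u, w_h)| \le \mu_b \|u - \Pi_h u\|_{h,*}\,\|w_h\|_h$. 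Dividing through by $\|w_h\|_h$ gives $\|w_h\|_h \le \mu_b \|u-\Pi_h u\|_{h,*}$, which by \eqref{eqn:EnergyNormInterpolation*} is bounded by $\mu_b C_3 h^{t-1}\|u\|_{H^t(Q)}$.

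Combining the two contributions yields $\|u-u_h\|_h \le (C_2 + \mu_b C_3)\,h^{t-1}\|u\|_{H^t(Q)}$, i.e. \eqref{eqn:EnergyNormErrorEstimate} with $C = C_2 + \mu_b C_3$; one should of course replace the exponent $s$ by $t=\min\{s,p+1\}$ throughout, since the approximation lemmas only deliver the rate up to $p+1$, and take $\|u\|_{H^t(Q)} \le \|u\|_{H^s(Q)}$ as needed. The only mild subtlety — and the step I would be most careful about — is checking the hypotheses of the three lemmas are genuinely met: that $s\ge 2$ guarantees $u\in H^{2,1}(Q)$ so consistency applies and $a_h(u,\cdot)$ is even well defined, that $u-\Pi_h u$ really sits in the space $V_{0h,*}$ on which boundedness was proved (so one needs $u\in H^{1,0}_0(Q)\cap H^{2,1}(Q)$, which holds), and that the projection $\Pi_h u$ lies in $V_{0h}\cap H^1(Q)$ as used implicitly when passing to the global estimates — this is the knot-multiplicity condition mentioned after Lemma~\ref{lem:InterpolationErrorEstimate}. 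None of these is a deep obstacle; the proof is essentially a bookkeeping exercise in chaining the already-established coercivity, boundedness, consistency, and approximation estimates.
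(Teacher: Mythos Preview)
Your proposal is correct and follows essentially the same route as the paper: triangle inequality through $\Pi_h u$, then coercivity plus Galerkin orthogonality (from consistency, Lemma~\ref{lem:consistency2}) plus boundedness (Lemma~\ref{lem:boundedness}) to reduce $\|\Pi_h u - u_h\|_h$ to $\|u-\Pi_h u\|_{h,*}$, and finally the approximation estimates of Lemma~\ref{lem:NormInterpolationEstimate}. The paper arrives at the constant $C = C_s(1+\mu_b/\mu_c)$ rather than your $C_2+\mu_b C_3$, but since $\mu_c=1$ this is the same structure; your careful check of the hypotheses (in particular that $s\ge 2$ ensures $u\in H^{2,1}(Q)$ so that consistency applies and $u-\Pi_h u\in V_{0h,*}$) is exactly the right bookkeeping.
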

%
\begin{proof}
Subtracting the IgA scheme 
\[a_h(u_h,v_h) = l_h(v_h), \quad \forall v_h \in V_{0h}\]
from the consistence identity 
\[a_h(u,v_h) = l_h(v_h), \quad \forall v_h \in V_{0h},\]
we obtain the so-called  Galerkin orthogonality 
\begin{align}
\label{eqn:GalerkinOrthogonality}
    a_h(u - u_h,v_h) = 0, \quad \forall v_h \in V_{0h},
\end{align}
that is crucial for the discretization error estimate.

Using now the triangle inequality, we can estimate the discretization 
error $u - u_h$ as follows
\begin{align}
\label{eqn:triangleinequality}
   \|u - u_h\|_h \leq \|u - \Pi_h u\|_h + \|\Pi_h u - u_h\|_h.
\end{align}
The first term is nothing but the quasi-interpolation error 
that can easily be estimated by means of Lemma~\ref{lem:NormInterpolationEstimate}.
The estimation of the second term on the right-hand side of \eqref{eqn:triangleinequality}
is more involved. 
Using the fact that $\Pi_h u - u_h \in V_{0h}$,
the $V_{0h}$-ellipticity of the bilinear form $a_h(\cdot,\cdot)$ as was shown in Lemma~\ref{lem:Coercivity},
the Galerkin orthogonality \eqref{eqn:GalerkinOrthogonality},
and the boundedness of the discrete bilinear form Lemma~\ref{lem:boundedness},
we can derive the following estimates
%
%
\begin{align*}
  \mu_c  \|\Pi_h u - u_h\|_h ^2 & \leq a_h(\Pi_h u - u_h,\Pi_h u - u_h)\\
				&  = a_h(\Pi_h u - u,\Pi_h u - u_h) \\
				& \leq \mu_b \|\Pi_h u - u\|_{h,*}\| \Pi_h u - u_h\|_{h}.
\end{align*}
Hence, we have
\begin{align}
\label{eqn:discreteinterpolateenergy}
  \|\Pi_h u - u_h\|_h \leq (\mu_b/\mu_c) \|\Pi_h u - u\|_{h,*}.
\end{align}
Inserting \eqref{eqn:discreteinterpolateenergy} into the triangle inequality \eqref{eqn:triangleinequality}
and using the estimates \eqref{eqn:EnergyNormInterpolation*} and \eqref{eqn:EnergyNormInterpolation}
from Lemma~\ref{lem:NormInterpolationEstimate}, we have
\begin{align*}
   \|u - u_h\|_h &\leq \|u - \Pi_h u\|_h + \|\Pi_h u - u_h\|_h \\
                 & \leq \|u - \Pi_h u\|_{h} +  (\mu_b/\mu_c)\|\Pi_h u - u\|_{h,*} \\
                 & \leq C_s(1 +  \mu_b/\mu_c ) h^{t- 1} \|v\|_{H^{s}(Q)},
\end{align*}
%
which proves the discretization error estimate \ref{eqn:EnergyNormErrorEstimate}
with $C = C_s(1 +  \mu_b/\mu_c )$.
\qed
\end{proof}

\section{Moving Spatial Computational Domain}
\label{Sec:MovingSpatialComputationalDomain}
In this section, we will formulate the space-time scheme for a moving spatial domain 
$Q := \{ (x,t) \in \mathbb{R}^{d+1}; x \in \Omega(t), t \in (0,T)\},$ 
where $\Omega(t) 
\subset
\mathbb{R}^{d}$
for $d=1,2,3.$
\begin{figure}[bth!]
\begin{center}
  \includegraphics[width=0.40\textwidth]{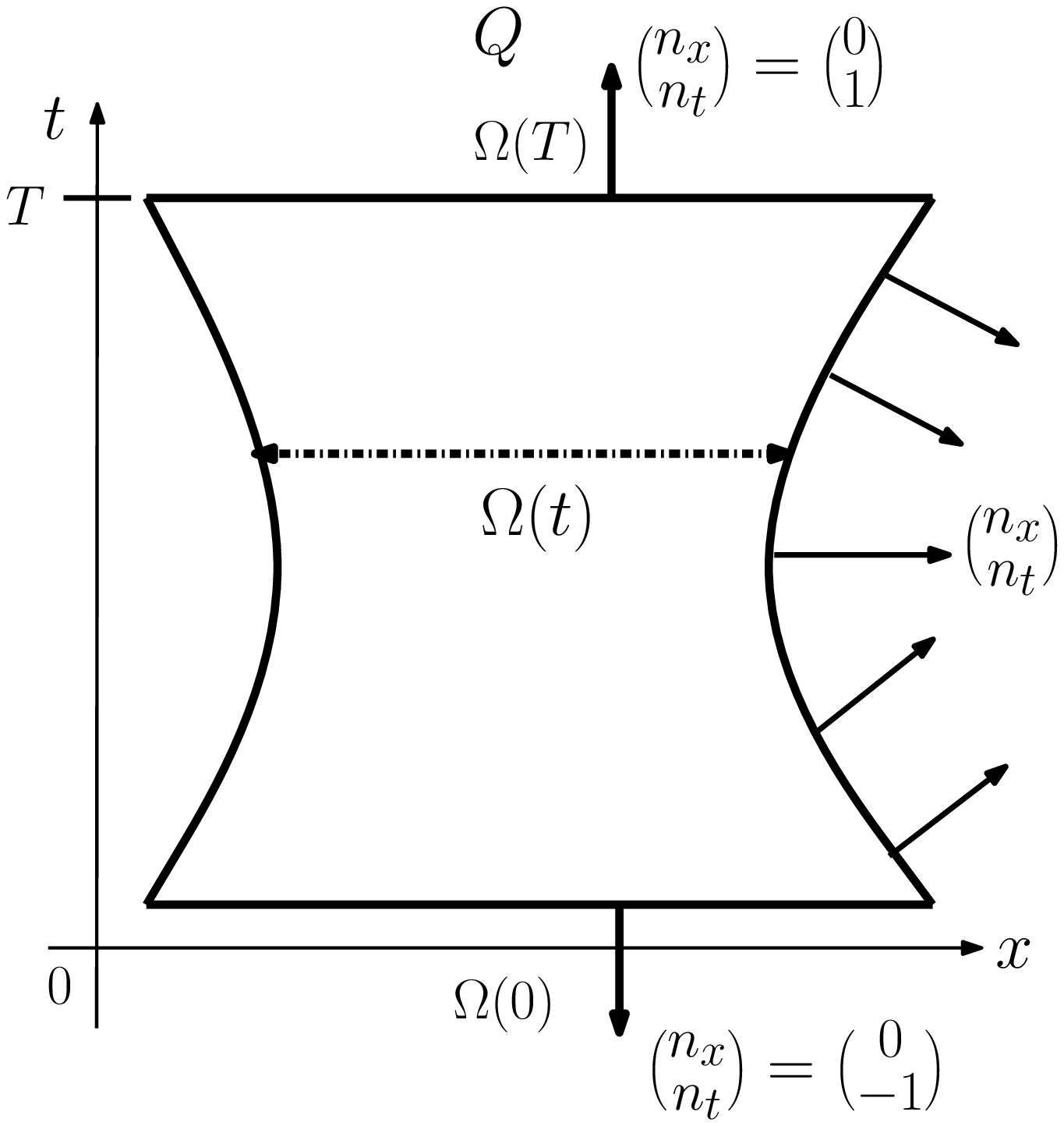}
   \hspace{8.00mm}
  \includegraphics[width=0.50\textwidth]{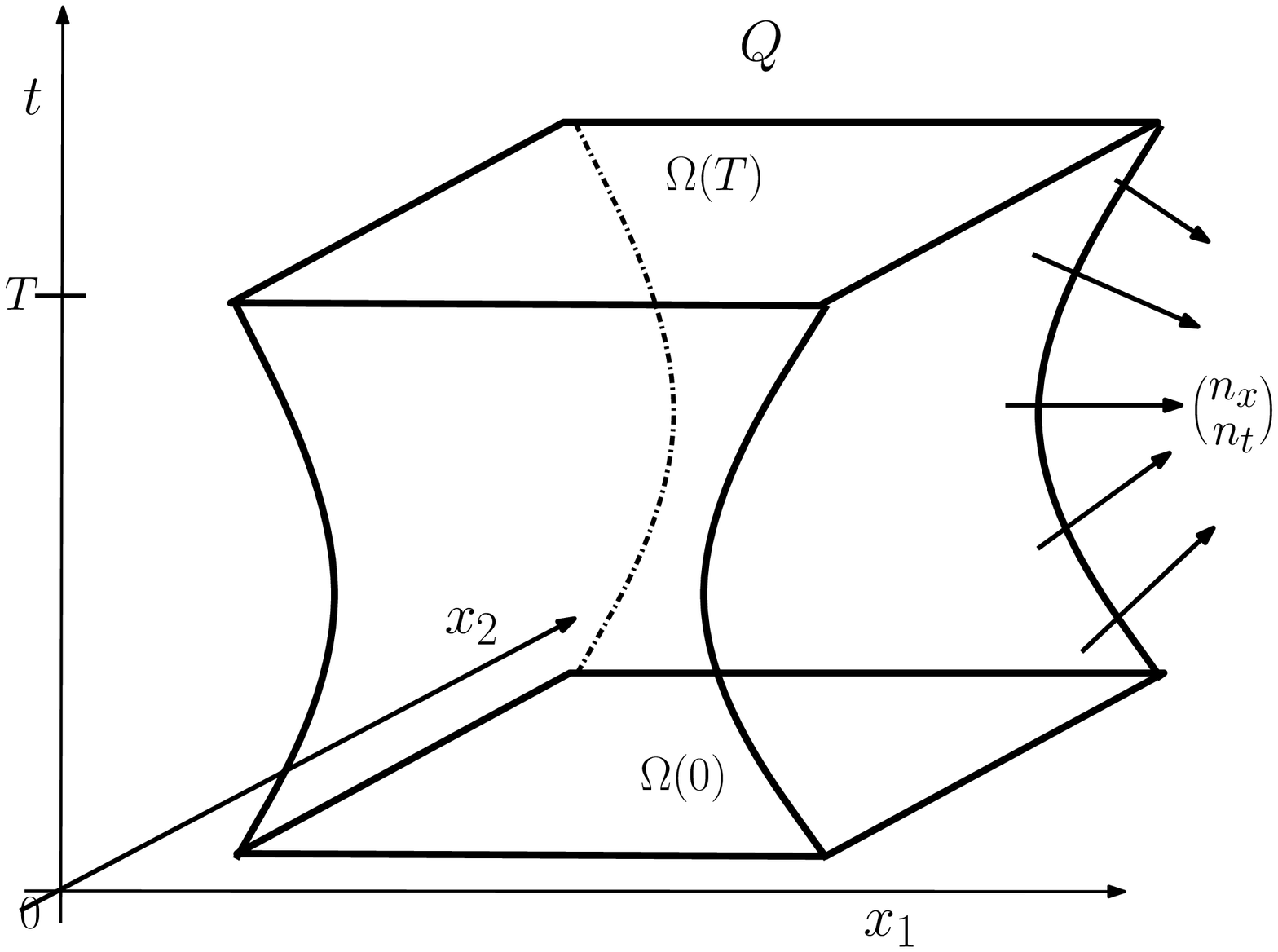}
\end{center}
\caption{Moving spatial domain $\Omega(t)$, $t \in [0,T]$.}
\label{fig:MovingDomainMap}
\end{figure}

Let us now assume that the underlying polynomial degree 
$p \ge 2$ and the multiplicity $m$ of all internal knots is less than 
or equal $p-1$, i.e., ${V}_{0h}$ is always a subset of $C^1(\overline{Q})$.
Thus, the IgA (B-spline or NURBS) space ${V}_{0h}$ is a subspace of
$H^{1,1}_{0,\underline{0}}(Q) \cap H^2(Q).$
Similar to Subsection~\ref{Subsec:DiscreteVariationalFormulation}, we derive the 
IgA
scheme by multiplying 
our parabolic PDE \eqref{eqn:ModelProblem} by a test function of the form $v_h + \theta h \partial_t v_h$
with an arbitrary $v_h \in {V}_{0h}$, 
where 
$\theta$ is a positive constant which will be defined later, 
and integrating over the whole space-time cylinder $Q,$ giving the identity
\begin{align}
 \label{eqn:movingspacetimeformulation}
 \int_{Q} f( v_h + \theta h \partial_{t} v_h) \, dx dt  
 &= \int_{Q} ( \partial_{t} u_h v_h + \theta h \partial_{t} u_h \partial_{t} v_h  \\
            &-\Delta u_h(v_h + \theta h \partial_t v_h))\, dx dt \notag   
\end{align}
that is valid for all $v_h \in {V}_{0h}$.

Integration by parts with respect to $x$ in the last term of the bilinear form 
on the right-hand side of \eqref{eqn:movingspacetimeformulation} gives
 \begin{align*}
   & \int_{Q} f( v_h + 
\theta h \partial_{t} v_h) \, dx dt 
   =  \int_{Q}  (\partial_{t} u_h v_h + \theta h \partial_{t} u_h \partial_{t} v_h  
           + \nabla_x u_h \cdot \nabla_x  v_h \\ 
           & + \theta h   \nabla_x  u_h \cdot \nabla_x \partial_{t} v_h)\,dxdt 
 - \int_{\partial Q} n_x \cdot \nabla_x u \,(v_h + \theta h \partial_{t} v_h)\, ds.
\end{align*}
We now integrate the fourth term on the right-hand side by parts wrt time $t$ and obtain
 \begin{align*}
 & = \int_{Q}  (\partial_{t} u_h v_h + \theta h \partial_{t} u_h \partial_{t} v_h  
           + \nabla_x u_h \cdot \nabla_x  v_h - \theta h  \partial_{t} \nabla_x  u_h \cdot \nabla_x  v_h)\,dxdt \\
 & + \int_{\partial Q} n_t  \nabla_x u_h\cdot \nabla_x v_h\, ds
   - \int_{\partial Q} n_x \cdot \nabla_x u_h \,(v_h + \theta h \partial_{t} v_h)\, ds.
\end{align*}

Using the facts that $v_h \in V_{0h}$ and $n_x$ is zero on $\Sigma_0 \cup \Sigma_T$,
we can continue to write
 \begin{align*}
 &=  \int_{Q}  (\partial_{t} u_h v_h + \theta h \partial_{t} u_h \partial_{t} v_h  
           + \nabla_x u_h \cdot \nabla_x  v_h - \theta h   \partial_{t}  \nabla_x  u_h \cdot \nabla_x v_h)\,dxdt \\
 & + \theta h\int_{\Sigma \cup \Sigma_T} n_t \nabla_x u_h \cdot \nabla_x v_h\, ds
   - \theta h\int_{\Sigma } n_x \cdot \nabla_x u_h \partial_{t} v_h\, ds.
\end{align*}
Considering the terms on the boundary $\Sigma,$ we note the following
 \begin{align*}
\theta h&\int_{\Sigma} [ n_t  \nabla_x u_h \cdot\nabla_x v_h - n_x \cdot \nabla_x u_h \partial_{t} v_h] \, ds \\
& = \theta h\int_{\Sigma} \nabla_x u_h  \cdot [ n_t \nabla_x v_h - n_x \partial_{t} v_h] \, ds = 0,
\end{align*}
since $[n_t \nabla_x v_h - n_x \partial_{t} v_h]$ is the tangential derivative of $v_h$ and $v_h= 0$ on $\Sigma.$ 
Finally, we arrive at our discrete scheme:
find $u_h \in {V}_{0h}$ such that 
\begin{align}
 \label{eqn:MovingDiscreteVariationalFormulation}
  b_h(u_h,v_h)  = l_h(v_h) \quad \forall v_h \in {V}_{0h},
\end{align}
where the bilinear and linear forms are given as follows:
 \begin{align}
 b_h(u_h,v_h)
          & := \int_{Q} (\partial_{t} u_hv_h + \theta h \partial_{t} u_h \partial_{t} v_h + \nabla_x u_h \cdot \nabla_x  v_h \notag \\
          &\quad-\theta h   \partial_{t}  \nabla_x  u_h \cdot \nabla_x v_h) \,dxdt + \theta h \int_{\Sigma_T} \nabla_x u_h \cdot  \nabla_x v_h \,ds, \label{eqn:movingbilinearform}\\ 
   l_h(v_h) & := \int_{Q} f \left[v_h + \theta h \partial_{t} v_h \right]\,dxdt. \notag 
\end{align}

\begin{remark}
    Note that, for $u_h, v_h \in V_{0h}$ with $p \geq 2$, we obtain 
    an alternative representation for the discrete bilinear form
    \begin{align*}
     b_h(u_h,v_h) &= \int_{Q} (\partial_{t} u_hv_h + \theta h \partial_{t} u_h \partial_{t} v_h + \nabla_x u_h \cdot \nabla_x  v_h \notag \\
          &\quad + \theta h \nabla_x  u_h \cdot \nabla_x \partial_{t} v_h) \,dxdt + \theta h \int_{\Sigma} n_t \nabla_x u_h \cdot  \nabla_x v_h \,ds
    \end{align*}
    by integration by parts.
    For non-moving domains, we observe that $n_t = 0$ on $\Sigma$, and we therefore obtain
    \[ b_h(u_h,v_h) = a_h(u_h,v_h) \qquad \text{for all }  u_h, v_h \in V_{0h}. \]
    Hence, in the case of a non-moving domain, the discrete variational formulation \eqref{eqn:DiscreteVariationalFormulation} is equivalent to the discrete problem \eqref{eqn:MovingDiscreteVariationalFormulation}.
\end{remark}

In the next lemma, we show that the discrete bilinear form for the moving spatial domain is $V_{0h}-$coercive wrt to the energy norm
\[ \| v \|_{h,m} := \left[ \| v \|_{h}^2 + \theta h \|\nabla_x v\|^2_{L_2(\Sigma_T)} \right]^{\frac{1}{2}}. \]
\begin{lemma}
 \label{lem:movingcoercivity}
 Let $\theta > 0$ 
 be sufficiently small, in particular, we assume
 \[ \theta < (2 C_{inv,1} C_u)^{-1}, \]
 where $C_{inv,1}$ is the constant from Lemma~\ref{lem:InverseInequalities} and the constant $C_u > 0$ is given by the quasi uniform assumption (\ref{equ:quasiUniformAssumption}).  
 Then the discrete bilinear form $b_h(\cdot,\cdot) : {V}_{0h} \times {V}_{0h} \rightarrow \mathbb{R},$
 defined by \eqref{eqn:movingbilinearform}, is ${V}_{0h}-$coercive 
 wrt the norm $\|\cdot\|_{h,m},$ i.e.
\begin{align}
 \label{eqn:movingcoercive}
   b_h(v_h,v_h) &\geq \mu_c \|v_h\|_{h,m}^2, \quad \forall v \in  V_{0h} 
 \end{align}
 with $ \mu_c = 1/2.$
\end{lemma}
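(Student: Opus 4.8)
The plan is to follow the proof of Lemma~\ref{lem:Coercivity}, adding one new ingredient to cope with the lateral boundary $\Sigma$ of a moving domain, on which the time component $n_t$ of the outer normal no longer vanishes. First I would set $u_h=v_h$ in \eqref{eqn:movingbilinearform}, rewrite the two products of time derivatives as total derivatives,
\[
 \partial_t v_h\, v_h = \tfrac12\,\partial_t v_h^2,\qquad \partial_t\nabla_x v_h\cdot\nabla_x v_h = \tfrac12\,\partial_t|\nabla_x v_h|^2 ,
\]
and apply Gauss' theorem to the first of these. Since $v_h=0$ on $\Sigma_0$ and on $\Sigma$, while $n_t=1$ on $\Sigma_T$, the first term yields exactly $\tfrac12\|v_h\|_{L_2(\Sigma_T)}^2$, just as in the fixed-domain case; note that $\nabla_x v_h$ also vanishes on $\Sigma_0$ (because $v_h(\cdot,0)\equiv 0$), but \emph{not} on $\Sigma$ — only its tangential part does.

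The crucial point is that the term $-\theta h\int_Q \partial_t\nabla_x v_h\cdot\nabla_x v_h\,dx\,dt$ must \emph{not} be integrated by parts in time: on a moving domain that would produce the boundary contribution $-\tfrac{\theta h}{2}\int_\Sigma|\nabla_x v_h|^2 n_t\,ds$, whose sign is not controlled. Instead I would keep it as a volume integral and bound it directly by the Cauchy--Schwarz inequality, the element-wise inverse inequality \eqref{eqn:inverseinequality1}, and the quasi-uniformity assumption \eqref{equ:quasiUniformAssumption} (in the form $h_K^{-1}\le C_u h^{-1}$), exactly as was done for the last term in the proof of Lemma~\ref{lem:boundedness}. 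This gives
\[
 \Big|\theta h\!\int_Q \partial_t\nabla_x v_h\cdot\nabla_x v_h\,dx\,dt\Big|
  \le \theta h\,\|\partial_t\nabla_x v_h\|_{L_2(Q)}\|\nabla_x v_h\|_{L_2(Q)}
  \le \theta\,C_{inv,1}C_u\,\|\nabla_x v_h\|_{L_2(Q)}^2 .
\]
Collecting all contributions, and keeping the term $\theta h\int_{\Sigma_T}|\nabla_x v_h|^2\,ds$ that is already present in \eqref{eqn:movingbilinearform}, I obtain
\[
 b_h(v_h,v_h)\ \ge\ \tfrac12\|v_h\|_{L_2(\Sigma_T)}^2 + \theta h\|\partial_t v_h\|_{L_2(Q)}^2 + (1-\theta C_{inv,1}C_u)\|\nabla_x v_h\|_{L_2(Q)}^2 + \theta h\|\nabla_x v_h\|_{L_2(\Sigma_T)}^2 .
\]

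To finish, I would invoke the smallness assumption $\theta < (2C_{inv,1}C_u)^{-1}$, which forces $1-\theta C_{inv,1}C_u > \tfrac12$; hence every coefficient on the right-hand side is at least $\tfrac12$ times the corresponding coefficient in $\|v_h\|_{h,m}^2 = \|\nabla_x v_h\|_{L_2(Q)}^2 + \theta h\|\partial_t v_h\|_{L_2(Q)}^2 + \tfrac12\|v_h\|_{L_2(\Sigma_T)}^2 + \theta h\|\nabla_x v_h\|_{L_2(\Sigma_T)}^2$, and \eqref{eqn:movingcoercive} follows with $\mu_c=1/2$. I expect the main — indeed essentially the only — subtlety to be justifying that no uncontrolled $\Sigma$-term survives: one must either avoid ever producing it (the route above) or, using the alternative representation of $b_h$ from the preceding Remark, argue that $\int_\Sigma n_t\,\nabla_x v_h\cdot\nabla_x v_h\,ds$ is treated consistently; everything else is the same bookkeeping already used in Lemmas~\ref{lem:Coercivity} and~\ref{lem:boundedness}.
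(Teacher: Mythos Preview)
Your proposal is correct and follows essentially the same route as the paper's proof: set $u_h=v_h$, convert $\partial_t v_h\,v_h$ into $\tfrac12\partial_t v_h^2$ and apply Gauss' theorem, leave the cross term $-\theta h\int_Q\partial_t\nabla_x v_h\cdot\nabla_x v_h\,dx\,dt$ as a volume integral and bound it via Cauchy--Schwarz, the inverse inequality~\eqref{eqn:inverseinequality1}, and quasi-uniformity~\eqref{equ:quasiUniformAssumption}, then use $\theta<(2C_{inv,1}C_u)^{-1}$ to obtain the factor $\tfrac12$. The only cosmetic difference is that the paper absorbs the bad term directly into $\|v_h\|_{h,m}^2$ (writing $b_h(v_h,v_h)\ge(1-\theta C_{inv,1}C_u)\|v_h\|_{h,m}^2$), whereas you track the coefficient on $\|\nabla_x v_h\|_{L_2(Q)}^2$ separately; both lead to the same conclusion.
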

\begin{proof}
Let $v_h \in V_{0h}.$ By means of the definition of the discrete bilinear term $b_h(\cdot,\cdot)$ 
from \eqref{eqn:movingbilinearform} and using Gauss' theorem, we get
\begin{align*}
  b_h(v_h,v_h)  =& \int_{Q}  \left[\partial_{t} v_h  v_h + \theta h (\partial_{t} v_h )^2
                + \nabla_x v_h \cdot \nabla_x  v_h - \theta h  \partial_{t}   \nabla_x  v_h \cdot \nabla_x  v_h \right]\,dxdt \\
                 & + \theta h \int_{\Sigma_T} \nabla_x v_h \cdot  \nabla_x v_h  \,ds\\
                 =& \int_{Q} \left[ \frac{1}{2} \partial_t v_h^2 +  \theta h (\partial_{t} v_h )^2 + |\nabla_x  v_h |^2 - \theta h  \partial_{t}   \nabla_x  v_h \cdot \nabla_x  v_h \right]\,dxdt \\
                 & + \theta h \int_{\Sigma_T} |\nabla_x v_h|^2   \,ds\\
                 =& \frac{1}{2} \int_{\partial Q} v_h^2 n_t ds  +  \theta h \|\partial_{t} v_h \|^2_{L_2(Q)} + \|\nabla_x v_h \|^2_{L_2(Q)} \\& - \theta h \int_{Q} \partial_{t}   \nabla_x  v_h \cdot \nabla_x  v_h dx dt + \theta h \|\nabla_x v_h\|^2_{L_2(\Sigma_T)}
\end{align*}
for all $v \in {V}_{0h}.$ Since $v_h = 0$ on $\Sigma \cup \Sigma_0$, we further obtain
\begin{align*}
b_h(v_h,v_h)  =&   \frac{1}{2} \| v_h \|^2_{L_2(\Sigma_T)} +  \theta h \|\partial_{t} v_h \|^2_{L_2(Q)} + \|\nabla_x v_h \|^2_{L_2(Q)} + \theta h  \|\nabla_x v_h\|^2_{L_2(\Sigma_T)}\\
  &- \theta h \int_{Q} \partial_{t}   \nabla_x  v_h \cdot \nabla_x  v_h dx dt\\
    \geq & \|v_h\|_{h,m}^2 - \theta h \|\partial_t \nabla_x v_h\|_{L_2(Q)}\|\nabla_x v_h\|_{L_2(Q)} \quad \text{for all } v_h \in {V}_{0h}.
\end{align*}
By using the inverse inequality \eqref{eqn:inverseinequality1} and the assumption (\ref{equ:quasiUniformAssumption}) we further obtain similar to the proof of Lemma \ref{lem:boundedness} the estimate
\begin{align*}
 \|\partial_t \nabla_x v_h\|_{L_2(Q)}^2 &= \sum_{K \in \mathcal{K}_{h}} \|\partial_t \nabla_x v_h\|_{L_2(K)}^2 \leq \sum_{K \in \mathcal{K}_{h}} C_{inv,1}^2 h_K^{-2} \|\nabla_x v_h\|_{L_2(K)}^2\\
 &\leq C_{inv,1}^2 C_u^2 h^{-2} \sum_{K \in \mathcal{K}_{h}} \|\nabla_x v_h\|_{L_2(K)}^2 = \left[ C_{inv,1} C_u h^{-1} \| \nabla_x v_h\|_{L_2(Q)} \right]^2.
\end{align*}
Hence, we have for $\theta < (2 C_{inv,1} C_u)^{-1}$ the estimate for the bilinear form
\begin{align*}
 b_h(v_h,v_h) &\geq \|v_h\|_{h,m}^2 - \theta h \|\partial_t \nabla_x v_h\|_{L_2(Q)}\|\nabla_x v_h\|_{L_2(Q)} \\
 &\geq (1 - \theta h C_{inv,1} C_u h^{-1}) \|v_h\|_{h,m}^2  \geq \frac{1}{2} \|v_h\|_{h,m}^2.
\end{align*}
\qed 
\end{proof}
%
To show the boundedness for the bilinear form $b_h(\cdot,\cdot)$, we need the following additional norm
\begin{align*}
 \|v\|_{h,m,\ast} := \left[ \|v\|_{h,m} + (\theta h)^{-1} \|v\|_{L_2(Q)}^2 + (\theta^2 h^2)\|\partial_t \nabla_x v\|_{L_2(Q)}^2 \right]^{\frac{1}{2}}.
\end{align*}

\begin{lemma}
 \label{lem:boundednessmovingdomain}
   The discrete bilinear form $b_h(\cdot,\cdot) : {V}_{0h,\ast} \times {V}_{0h} \rightarrow \mathbb{R},$
 defined by \eqref{eqn:movingbilinearform}, is uniformly bounded on ${V}_{0h,\ast} \times {V}_{0h}$, i.e., there exists a positive constant $\mu_b$ which does not depend on $h$ such that
  \begin{equation}
   \label{eqn:boundednessmovingdomain}
    | b_h(u,v_h) | \leq \mu_b \|u \|_{h,m,*} \|v_h \|_{h,m}
  \end{equation}
  for all $u \in {V}_{0h,\ast}$ and all $v_h \in {V}_{0h}$.
\end{lemma}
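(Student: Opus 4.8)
The plan is to bound $b_h(u,v_h)$ term by term, following the pattern of the proof of Lemma~\ref{lem:boundedness}, and then to collect the pieces by a discrete Cauchy--Schwarz inequality. Writing out the four volume integrals and the surface integral in \eqref{eqn:movingbilinearform}, only the first term $\int_Q \partial_t u\, v_h\, dxdt$ is not already in a form directly amenable to Cauchy--Schwarz against a norm of $v_h$. As in Lemma~\ref{lem:boundedness}, I would integrate it by parts in $t$: since $u$ ranges over the space $V_{0h,*}$ (so $u$ is regular enough, $\partial_t\nabla_x u\in L_2(Q)$, and $u|_{\Sigma_0}=0$) while $v_h|_\Sigma = 0$, and since on $\Sigma$ the time-component $n_t$ of the outward normal need not vanish but is annihilated there by $v_h$, the boundary contribution reduces to $\int_{\Sigma_T} u v_h\, ds$, giving $\int_Q \partial_t u\, v_h\,dxdt = -\int_Q u\,\partial_t v_h\,dxdt + \int_{\Sigma_T} u v_h\,ds$.

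Next I would apply the Cauchy--Schwarz inequality to each of the resulting six scalar products, inserting the appropriate powers of $\theta h$ so that the factor carrying $v_h$ is always one of $\theta h\|\partial_t v_h\|_{L_2(Q)}^2$, $\|v_h\|_{L_2(\Sigma_T)}^2$, $\|\nabla_x v_h\|_{L_2(Q)}^2$, or $\theta h\|\nabla_x v_h\|_{L_2(\Sigma_T)}^2$, each of which is, up to a fixed constant, bounded by $\|v_h\|_{h,m}^2$. The matching factors carrying $u$ are then $(\theta h)^{-1}\|u\|_{L_2(Q)}^2$ and $\|u\|_{L_2(\Sigma_T)}^2$ from the split first term, $\theta h\|\partial_t u\|_{L_2(Q)}^2$ from the second, $\|\nabla_x u\|_{L_2(Q)}^2$ from the third, $\theta^2 h^2\|\partial_t\nabla_x u\|_{L_2(Q)}^2$ from the fourth, and $\theta h\|\nabla_x u\|_{L_2(\Sigma_T)}^2$ from the surface term; each of these occurs, again up to a fixed constant, in $\|u\|_{h,m,*}^2$. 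I would stress here that this last bookkeeping is exactly why the auxiliary norm $\|\cdot\|_{h,m,*}$ carries the extra summand $\theta^2 h^2\|\partial_t\nabla_x v\|_{L_2(Q)}^2$ absent from $\|\cdot\|_{h,*}$: in contrast to \eqref{eqn:bilinearform}, the mixed derivative $\partial_t\nabla_x$ in \eqref{eqn:movingbilinearform} falls on the first argument, which may be the exact solution, so the inverse inequality \eqref{eqn:inverseinequality1} cannot be invoked and this quantity has to be controlled by the norm directly.

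To finish, I would combine the six products via the discrete Cauchy--Schwarz inequality $\sum_i a_i b_i \le (\sum_i a_i^2)^{1/2}(\sum_i b_i^2)^{1/2}$, bound the sum of the $u$-factors by $2\|u\|_{h,m,*}^2$ and the sum of the $v_h$-factors by $2\|v_h\|_{h,m}^2$ (the factors $2$ absorbing the repeated occurrences of $\|\nabla_x v_h\|_{L_2(Q)}^2$ and $\theta h\|\partial_t v_h\|_{L_2(Q)}^2$ as well as the factor $\tfrac12$ in the definitions of $\|\cdot\|_h$ and hence of $\|\cdot\|_{h,m}$), and conclude \eqref{eqn:boundednessmovingdomain} with $\mu_b = 2$. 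The only point that needs a little care is the legitimacy of the integration by parts in $t$ for the first term, i.e.\ that the space over which $u$ ranges really consists of functions whose trace on $\Sigma_0$ vanishes and for which $\partial_t\nabla_x u\in L_2(Q)$; once this is granted, the remainder is the same weighted Cauchy--Schwarz computation as in Lemma~\ref{lem:boundedness}, now with one additional volume term and one surface term.
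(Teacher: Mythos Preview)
Your proposal is correct and follows essentially the same approach as the paper: integrate the first term by parts in $t$, bound each of the six resulting scalar products by Cauchy--Schwarz with matching $\theta h$-weights, and combine via the discrete Cauchy--Schwarz inequality to obtain $\mu_b=2$. One tiny slip: on $\Sigma_0$ it is $v_h$ (not necessarily $u$) that vanishes, but this is exactly what is needed to kill that boundary contribution, so the argument goes through unchanged.
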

\begin{proof}
 We estimate the discrete bilinear form \eqref{eqn:movingbilinearform} 
  \begin{align*}
  b_h(u,v_h) & = \int_{Q} \left(\partial_{t} u v_h + \theta h \partial_{t} u \partial_{t} v_h + \nabla_x u \cdot \nabla_x  v_h -\theta h   \partial_{t}  \nabla_x  u \cdot \nabla_x v_h \right) \,dxdt \\
  &\quad + \theta h \int_{\Sigma_T} \nabla_x u \cdot  \nabla_x v_h \,ds
  \end{align*}
  term by term. For the first, second and third term, we can proceed as in the non-moving case, 
  and we obtain the estimates
  \begin{align*}
   \int_{Q}  \partial_{t} u v_h \,dx dt &\leq  (\theta h)^{-\frac{1}{2}}  \|u \|_{L_2(Q)} (\theta h)^{\frac{1}{2}}  \|\partial_{t}  v_h \|_{L_2(Q)} \\&\quad +  \|u \|_{L_2(\Sigma_T)}  \|v_h \|_{L_2(\Sigma_T)},\\
   \theta h \int_{Q}  \partial_{t} u \partial_{t} v_h   \,dx dt  &\leq (\theta h)^{\frac{1}{2}}  \|\partial_{t}  u \|_{L_2(Q)} (\theta h)^{\frac{1}{2}}  \|\partial_{t}  v_h \|_{L_2(Q)},\\
   \int_{Q}  \nabla_x u \cdot \nabla_x  v_h \,dx dt &\leq \|\nabla_x u \|_{L_2(Q)} \|\nabla_x v_h \|_{L_2(Q)}.
  \end{align*}
  For the two remaining terms, we also use the Cauchy-Schwarz inequality, and we obtain
  \begin{align*}
   \theta h \int_{Q} \partial_{t} \nabla_x u \cdot \nabla_x v_h \,dx dt &\leq \theta h \| \partial_t \nabla_x u \|_{L_2(Q)} \|\nabla_x v_h \|_{L_2(Q)},\\
   \theta h \int_{\Sigma_T} \nabla_x u \cdot  \nabla_x v_h \,ds &\leq (\theta h)^{\frac{1}{2}} \| \nabla_x u \|_{L_2(\Sigma_T)} (\theta h)^{\frac{1}{2}}\|\nabla_x v_h \|_{L_2(\Sigma_T)}.
  \end{align*}
  Combining these estimates, we conclude the statement of this Lemma with
  \begin{align*}
    b_h(u,v_h) &\leq \Big[ (\theta h)^{-1}  \|u \|_{L_2(Q)}^2 + \|u \|_{L_2(\Sigma_T)}^2 + \theta h  \|\partial_{t}  u \|_{L_2(Q)}^2 \\
    &\quad + \|\nabla_x u \|_{L_2(Q)}^2 + \theta^2 h^2 \| \partial_t \nabla_x u \|_{L_2(Q)}^2 + \theta h \| \nabla_x u \|_{L_2(\Sigma_T)}^2 \Big]^{\frac{1}{2}} \\
    &\times \Big[ \theta h  \|\partial_{t}  v_h \|_{L_2(Q)}^2 + \|v_h \|_{L_2(\Sigma_T)}^2 + \theta h  \|\partial_{t}  v_h \|_{L_2(Q)}^2\\
    &\quad + \|\nabla_x v_h \|_{L_2(Q)}^2 + \|\nabla_x v_h \|_{L_2(Q)}^2 + \theta h\|\nabla_x v_h \|_{L_2(\Sigma_T)}^2 \Big]^{\frac{1}{2}} \\
    &\leq 2 \|u \|_{h,m,*} \|v_h \|_{h,m}.
  \end{align*}
\qed
\end{proof}
%
\begin{lemma}
\label{lem:EnergyNormEstimates}
 Let $s$ be a positive integer with $2 \leq s \leq p+1$, and let $v \in H^{s}(Q)$.  
 Then there exist a projection $\Pi_h$  from $H^{1,1}_{0,\underline{0}}(Q) \cap H^s(Q)$ to $V_{0h}$,
 and generic constants $C_1, C_2 > 0$ such that the following error estimates hold
 \begin{align}
      \|v - \Pi_{h}v\|_{h,m}  & \leq C_1 h^{s- 1} \|v\|_{H^{s}(Q)}, \label{eqn:EnergyNormMovingInterpolation} \\
      \|v - \Pi_{h}v\|_{h,m,*}  & \leq C_2 h^{s- 1} \|v\|_{H^{s}(Q)}, \label{eqn:EnergyNormMovingInterpolation*}
 \end{align} 
 where 
 $p$ denotes the underlaying polynomial degree of the B-spline resp. NURBS.
\end{lemma}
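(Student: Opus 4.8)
The plan is to reduce both estimates to ingredients that are already at hand: the interpolation bound \eqref{eqn:EnergyNormInterpolation} of Lemma~\ref{lem:NormInterpolationEstimate}, the element-wise approximation estimate \eqref{eqn:InterpolationErrorEstimate} of Lemma~\ref{lem:InterpolationErrorEstimate} in the cases $l=0,1,2$, the scaled trace inequality \eqref{eqn:ContinuoustraceInequality} of Lemma~\ref{lem:ContinuoustraceInequality}, and the quasi-uniformity assumption \eqref{equ:quasiUniformAssumption}. We work with the projection $\Pi_h$ from Lemma~\ref{lem:InterpolationErrorEstimate}. Note that, since in this section $p\ge 2$ and the interior knots have multiplicity at most $p-1$, both $v\in H^s(Q)$ with $s\ge 2$ and $\Pi_h v\in V_{0h}$ belong to $H^2(Q)$, so the seminorms $|v-\Pi_h v|_{H^2(K)}$ used below are well defined; this is exactly where the hypothesis $s\ge 2$ enters.

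For \eqref{eqn:EnergyNormMovingInterpolation} I would start from the definition, which gives
\[ \|v-\Pi_h v\|_{h,m}^2 = \|v-\Pi_h v\|_h^2 + \theta h\,\|\nabla_x(v-\Pi_h v)\|_{L_2(\Sigma_T)}^2 . \]
The first term is bounded by $C\,h^{2(s-1)}\|v\|_{H^s(Q)}^2$ by \eqref{eqn:EnergyNormInterpolation} of Lemma~\ref{lem:NormInterpolationEstimate}. For the new second term I would apply the scaled trace inequality \eqref{eqn:ContinuoustraceInequality} component-wise to $\partial_{x_i}(v-\Pi_h v)\in H^1(K)$ on each $K$ with $\partial K\cap\Sigma_T\neq\emptyset$, then use \eqref{equ:quasiUniformAssumption} to pass from $h_K$ to $h$, sum over those elements, and finally invoke \eqref{eqn:InterpolationErrorEstimate} with $l=1$ and $l=2$. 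This yields a bound of the form $\theta h\,C\bigl(h^{-1}h^{2(s-1)}+h\,h^{2(s-2)}\bigr)\|v\|_{H^s(Q)}^2 = C\theta\,h^{2(s-1)}\|v\|_{H^s(Q)}^2$, and adding the two contributions gives \eqref{eqn:EnergyNormMovingInterpolation}.

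For \eqref{eqn:EnergyNormMovingInterpolation*} I would again split along the definition of $\|\cdot\|_{h,m,*}$ into the three pieces $\|v-\Pi_h v\|_{h,m}^2$, $(\theta h)^{-1}\|v-\Pi_h v\|_{L_2(Q)}^2$, and $\theta^2 h^2\|\partial_t\nabla_x(v-\Pi_h v)\|_{L_2(Q)}^2$. The first is handled by the bound just obtained; the second equals $(\theta h)^{-1}C_s h^{2s}\|v\|_{H^s(Q)}^2 = \theta^{-1}C_s h\cdot h^{2(s-1)}\|v\|_{H^s(Q)}^2$ by \eqref{eqn:InterpolationErrorEstimate} with $l=0$, hence is of even higher order; and for the third I would note that $\partial_t\partial_{x_i}$ is a second-order derivative, so $\|\partial_t\nabla_x(v-\Pi_h v)\|_{L_2(Q)}^2 \le \sum_{K\in\mathcal{K}_h}|v-\Pi_h v|_{H^2(K)}^2 \le C_s h^{2(s-2)}\|v\|_{H^s(Q)}^2$ by \eqref{eqn:InterpolationErrorEstimate} with $l=2$, making this piece $\theta^2 C_s h^{2(s-1)}\|v\|_{H^s(Q)}^2$. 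Summing the three pieces yields \eqref{eqn:EnergyNormMovingInterpolation*}.

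The only genuinely new work compared with the proof of Lemma~\ref{lem:NormInterpolationEstimate} is the control of the boundary term $\theta h\|\nabla_x(v-\Pi_h v)\|_{L_2(\Sigma_T)}^2$ and of the bulk term $\theta^2 h^2\|\partial_t\nabla_x(v-\Pi_h v)\|_{L_2(Q)}^2$, both of which require the $l=2$ case of \eqref{eqn:InterpolationErrorEstimate} and therefore the standing assumptions $s\ge 2$ and $V_{0h}\subset H^2(Q)$; I do not expect any further obstacle, and the constants $C_1,C_2$ can be made explicit in terms of $C_{tr}$, $C_u$, $C_s$, $\theta$ and $p$ in the same way as in Lemma~\ref{lem:NormInterpolationEstimate}.
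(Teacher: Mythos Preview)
Your proposal is correct and follows essentially the same route as the paper: reduce to Lemma~\ref{lem:NormInterpolationEstimate} and then control the two new terms $\theta h\|\nabla_x(v-\Pi_h v)\|_{L_2(\Sigma_T)}^2$ and $\theta^2 h^2\|\partial_t\nabla_x(v-\Pi_h v)\|_{L_2(Q)}^2$ via the scaled trace inequality \eqref{eqn:ContinuoustraceInequality} together with the $l=1,2$ cases of \eqref{eqn:InterpolationErrorEstimate}. The paper organizes the argument slightly differently (it first isolates the two additional terms and estimates them directly), but the ingredients and the logic are the same.
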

\begin{proof}
  By using Lemma \ref{lem:NormInterpolationEstimate} it remains to estimate the additional terms
  \begin{align*}
    h^2 \|\partial_t \nabla_x (v - \Pi_{h}v)\|_{L_2(Q)}^2 \quad \text{and} \quad h \| \nabla_x (v - \Pi_{h}v)\|_{L_2(\Sigma_T)}^2.
  \end{align*}
  With Lemma \ref{lem:InterpolationErrorEstimate} we obtain for $i,j=1,\ldots,d+1$
  \begin{align}\label{equ:errorEstimateMixedDerivates}
   \|\partial_{x_i} \partial_{x_j} (v - \Pi_{h}v)\|_{L_2(Q)} \leq C_s h^{s-2} \|v\|_{H^{s}(Q)}.
  \end{align}
  By using the error estimate \eqref{equ:errorEstimateMixedDerivates} we obtain
  \begin{align*}
   h^2 \|\partial_t \nabla_x (v - \Pi_{h}v)\|_{L_2(Q)}^2 &= h^2 \sum_{i=1}^d \|\partial_t \partial_{x_i} (v - \Pi_{h}v)\|_{L_2(Q)}^2\\
   &\leq h^2 \sum_{i=1}^d C_s^2 h^{2(s-2)} \|v\|_{H^{s}(Q)}^2 \\
   &= \left[\sqrt{d} C_s h^{s-1} \|v\|_{H^{s}(Q)} \right]^2.
  \end{align*}
  With Lemma \ref{lem:ContinuoustraceInequality} and using also \eqref{equ:errorEstimateMixedDerivates} we further obtain
  \begin{align*}
    &h  \| \nabla_x (v - \Pi_{h}v)\|_{L_2(\Sigma_T)}^2 = h \sum_{ \substack{K \in \mathcal{K}_{h} \\ \partial K \cap \Sigma_T \neq \emptyset } } \sum_{i=1}^d \| \partial_{x_i} (v - \Pi_{h}v)\|_{L_2(\partial K \cap \Sigma_T)}^2\\
    & \leq h \sum_{ \substack{K \in \mathcal{K}_{h} \\ \partial K \cap \Sigma_T \neq \emptyset } } \sum_{i=1}^d C_{tr}^2 h_K^{-1} \left( \|\partial_{x_i} (v - \Pi_{h}v)\|_{L_2(K)} + h_K | \partial_{x_i} (v - \Pi_{h}v) |_{H^1(K)} \right)^2\\
    &\leq 2 C_u C_{tr}^2 \sum_{ \substack{K \in \mathcal{K}_{h} \\ \partial K \cap \Sigma_T \neq \emptyset } } \sum_{i=1}^d \left( \|\partial_{x_i} (v - \Pi_{h}v)\|_{L_2(K)}^2 + h^2 | \partial_{x_i} (v - \Pi_{h}v) |_{H^1(K)}^2 \right)\\
    &\leq 2 C_u C_{tr}^2 \left[ \|\nabla_x (v - \Pi_{h}v)\|_{L_2(Q)}^2 + h^2 \sum_{i=1}^d | \partial_{x_i} (v - \Pi_{h}v) |_{H^1(Q)}^2 \right] \\
    &= 2 C_u C_{tr}^2 \left[ \|\nabla_x (v - \Pi_{h}v)\|_{L_2(Q)}^2 + h^2 \sum_{i=1}^d \sum_{j=1}^{d+1} | \partial_{x_i} \partial_{x_j} (v - \Pi_{h}v) |_{H^1(Q)}^2 \right]\\
    &\leq \left[ \sqrt{2 (1 + d(d+1)) C_u}  C_{tr} C_s  h^{s-1} \|v\|_{H^{s}(Q)}\right]^2,
  \end{align*}
  which completes the proof.
\qed
\end{proof}
\begin{lemma}
 \label{lem:consistencyMoving}
  Let $p \geq 2$. If the solution $u \in H^{1,0}_0(Q)$  of the variational problem 
  \eqref{eqn:VariationalFormulation} belongs to $H^2(Q)$,
  then it satisfies the consistency identity
  \begin{equation*}
    b_h(u,v_h) = l_h(v_h) \quad \forall \; v_h \in V_{0h}.
  \end{equation*}
\end{lemma}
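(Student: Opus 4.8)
The plan is to follow the argument of Lemma~\ref{lem:consistency2} almost verbatim, merely adapting the integration-by-parts bookkeeping to the tilted lateral boundary $\Sigma$. First I would recover the strong form of the PDE from the weak formulation \eqref{eqn:VariationalFormulation}: since $u \in H^{1,0}_0(Q) \cap H^2(Q)$, integration by parts in $t$ and $x$ rewrites \eqref{eqn:VariationalFormulation} as a variational identity carrying only volume contributions and a $\Sigma_0$-boundary term; testing against $v \in C^\infty_0(Q)$ and invoking the density of $C^\infty_0(Q)$ in $L_2(Q)$ gives $\partial_t u - \Delta u = f$ in $L_2(Q)$, and comparing the $\Sigma_0$ terms for arbitrary $v \in H^{1,1}_{0,\underline 0}(Q)$ gives $u = u_0$ on $\Sigma_0$, while the homogeneous Dirichlet condition $u = 0$ on $\Sigma$ is built into the definition of $H^{1,0}_0(Q)$.

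Next, I would multiply $\partial_t u - \Delta u = f$ by the time-upwind test function $v_h + \theta h \partial_t v_h$ with $v_h \in V_{0h}$ and integrate over $Q$, which is precisely the starting identity \eqref{eqn:movingspacetimeformulation} with $u_h$ replaced by the exact solution $u$. All the integrations by parts performed in Section~\ref{Sec:MovingSpatialComputationalDomain} to pass from \eqref{eqn:movingspacetimeformulation} to \eqref{eqn:MovingDiscreteVariationalFormulation} remain valid in this setting: $u \in H^2(Q)$ ensures that $\nabla_x u$, $\Delta u$ and $\partial_t \nabla_x u$ all lie in $L_2(Q)$, and the hypothesis $p \ge 2$ ensures $v_h \in H^2(Q)$ so that $\nabla_x \partial_t v_h$ is meaningful. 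Concretely, integration by parts in $x$ in the $\Delta u$ term produces a boundary integral over $\partial Q$ that reduces to $-\theta h \int_\Sigma n_x \cdot \nabla_x u \, \partial_t v_h \, ds$ because $n_x = 0$ on $\Sigma_0 \cup \Sigma_T$ and $v_h = 0$ on $\Sigma$; integration by parts in $t$ in the term $\theta h \int_Q \nabla_x u \cdot \nabla_x \partial_t v_h\,dxdt$ produces $-\theta h \int_Q \partial_t \nabla_x u \cdot \nabla_x v_h\,dxdt$ together with a boundary term $\theta h \int_{\partial Q} n_t \nabla_x u \cdot \nabla_x v_h \, ds$, whose $\Sigma_0$ part vanishes (since $v_h = 0$ on $\Sigma_0$ forces $\nabla_x v_h = 0$ there) and whose $\Sigma_T$ part is exactly the extra surface term appearing in $b_h$ in \eqref{eqn:movingbilinearform}.

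The one genuinely new point, and the step I expect to require the most care, is the treatment of the surviving $\Sigma$ contributions: collecting them yields $\theta h \int_\Sigma \nabla_x u \cdot \big( n_t \nabla_x v_h - n_x \partial_t v_h \big) \, ds$, and one must observe that $n_t \nabla_x v_h - n_x \partial_t v_h$ is (up to a harmless factor) a tangential derivative of $v_h$ along $\Sigma$, so that the integral vanishes because $v_h \equiv 0$ on $\Sigma$ --- note that, in contrast to the non-moving case, $\partial_t v_h$ itself need not vanish on the tilted boundary $\Sigma$, so it is really this tangential combination, and not each summand separately, that has to be used. Assembling the remaining volume terms and the $\Sigma_T$ term and comparing with the right-hand side $\int_Q f(v_h + \theta h \partial_t v_h)\,dx\,dt = l_h(v_h)$ then gives $b_h(u,v_h) = l_h(v_h)$ for all $v_h \in V_{0h}$, which is the assertion of the lemma.
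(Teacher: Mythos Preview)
Your proposal is correct and follows exactly the same route as the paper: first recover the strong form $\partial_t u - \Delta u = f$ in $L_2(Q)$ via the argument of Lemma~\ref{lem:consistency2}, then multiply by $v_h + \theta h\,\partial_t v_h$ and rerun the integrations by parts from the beginning of Section~\ref{Sec:MovingSpatialComputationalDomain}, using $u\in H^2(Q)$ and $p\ge 2$ to justify them; in particular, the tangential-derivative cancellation on $\Sigma$ is precisely the mechanism the paper uses.
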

\begin{proof}
With the same arguments as in Lemma \ref{lem:consistency2}, we obtain that
\begin{align}\label{eqn:strongL2Form}
 \partial_t u - \Delta u = f \quad \text{in } L_2(Q) \qquad \text{and} \qquad u = u_0 \quad \text{in } L_2(\Sigma_0).
\end{align}
We now multiply the differential equation of \eqref{eqn:strongL2Form} with a test function $v_h + \theta h \partial_t v_h$ for $v_h \in V_{0h}$ and integrate over the space-time domain $Q$. Since
$u \in H^2(Q)$ and $p \geq 2$,
we can apply all the derivations as we did 
at
the beginning of this section to obtain the statement of this Lemma.
\qed
\end{proof}
\begin{theorem}
\label{thm:errorestimatemovingdomain}
  Let $p\geq 2$ and $\theta$ be sufficiently small, see Lemma \ref{lem:movingcoercivity}. 
  Furthermore, let $u \in H^{1,0}_0(Q) \cap H^{s}(Q)$ with $s \geq 2$ 
  be the exact solution of the model problem \eqref{eqn:VariationalFormulation},
  and let $u_h \in V_{0h}$ be the solution to discrete variational problem \eqref{eqn:MovingDiscreteVariationalFormulation}.
  Then the discretization error estimate
  \begin{equation}
  \label{eqn:errorestimatemovingdomain}
    \|u - u_h\|_{h,{m}} \leq Ch^{t- 1} \|u\|_{H^{t}(Q)}, 
  \end{equation}
  holds, where $t = \min\{s,p+1\}$, 
  and $p$ denotes the underlying polynomial degree of the NURBS.
\end{theorem}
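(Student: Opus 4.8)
The plan is to mirror the proof of Theorem~\ref{thm:EnergyNormErrorEstimate} line by line, replacing each ingredient by its moving-domain counterpart. First I would subtract the discrete scheme \eqref{eqn:MovingDiscreteVariationalFormulation}, $b_h(u_h,v_h)=l_h(v_h)$ for all $v_h\in V_{0h}$, from the consistency identity of Lemma~\ref{lem:consistencyMoving}, $b_h(u,v_h)=l_h(v_h)$ for all $v_h\in V_{0h}$; the latter applies because the hypothesis $s\ge 2$ gives $u\in H^2(Q)$ and $p\ge 2$ is assumed. This produces the Galerkin orthogonality $b_h(u-u_h,v_h)=0$ for all $v_h\in V_{0h}$.

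Next I would introduce the projection $\Pi_h$ from Lemma~\ref{lem:EnergyNormEstimates} and split the error by the triangle inequality,
\[ \|u-u_h\|_{h,m}\le \|u-\Pi_h u\|_{h,m}+\|\Pi_h u-u_h\|_{h,m}. \]
The first summand is the interpolation error, bounded directly by \eqref{eqn:EnergyNormMovingInterpolation}. For the second summand, since $\Pi_h u-u_h\in V_{0h}$, I would chain the $V_{0h}$-coercivity of $b_h$ from Lemma~\ref{lem:movingcoercivity} (valid because $\theta<(2C_{inv,1}C_u)^{-1}$), the Galerkin orthogonality just derived, and the boundedness of $b_h$ from Lemma~\ref{lem:boundednessmovingdomain}:
\[ \mu_c\|\Pi_h u-u_h\|_{h,m}^2\le b_h(\Pi_h u-u_h,\Pi_h u-u_h)=b_h(\Pi_h u-u,\Pi_h u-u_h)\le \mu_b\|\Pi_h u-u\|_{h,m,*}\,\|\Pi_h u-u_h\|_{h,m}, \]
hence $\|\Pi_h u-u_h\|_{h,m}\le(\mu_b/\mu_c)\|\Pi_h u-u\|_{h,m,*}$, which I would then estimate by \eqref{eqn:EnergyNormMovingInterpolation*}. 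Combining the two summands gives $\|u-u_h\|_{h,m}\le(C_1+(\mu_b/\mu_c)C_2)h^{s-1}\|u\|_{H^s(Q)}$, and since the interpolation estimates saturate at polynomial degree $p$, replacing $s$ by $t=\min\{s,p+1\}$ yields \eqref{eqn:errorestimatemovingdomain}.

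Once Lemmas~\ref{lem:movingcoercivity}, \ref{lem:boundednessmovingdomain}, \ref{lem:consistencyMoving} and \ref{lem:EnergyNormEstimates} are available, the argument is essentially bookkeeping; the only points requiring care are that the hypotheses line up — the assumption $p\ge 2$ is what makes $V_{0h}\subset H^2(Q)$, so that both the consistency identity and the interpolation estimates (which involve second-order spatial derivatives, cf.\ \eqref{equ:errorEstimateMixedDerivates}) are legitimate, and $\theta$ must obey the smallness condition of Lemma~\ref{lem:movingcoercivity}. The genuine difficulty of the moving-domain case — controlling the additional boundary contributions on $\Sigma$ and $\Sigma_T$ generated by $b_h$ — has already been absorbed into those preparatory lemmas, so no new obstacle appears at this stage; the only thing to verify here is that the extra term $(\theta^2h^2)\|\partial_t\nabla_x v\|_{L_2(Q)}^2$ appearing in $\|v\|_{h,m,*}$ is exactly the quantity delivered with the correct order by \eqref{eqn:EnergyNormMovingInterpolation*}, which it is.
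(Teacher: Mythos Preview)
Your proposal is correct and follows exactly the approach the paper takes: the paper's proof is the single sentence that the estimate follows as in Theorem~\ref{thm:EnergyNormErrorEstimate} by using Lemmas~\ref{lem:movingcoercivity}--\ref{lem:consistencyMoving}, and you have simply spelled out that argument in full.
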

\begin{proof}
The stated error estimate follows in the exact way as in Theorem \ref{thm:EnergyNormErrorEstimate} by using Lemma \ref{lem:movingcoercivity}--\ref{lem:consistencyMoving}.
\qed
\end{proof}
%
%
\begin{remark}
 \label{rem:p=1:movingdomain}
 For the case $p=1$, Lemma \ref{lem:consistencyMoving} is not valid in general. 
In this case, we have to take additional consistency errors into account 
by estimating them properly. It turns out, 
cf. Subsection \ref{subsec:MovingSpatialComputationalDomain}, 
that also, for the case $p=1$, we obtain the 
full
convergence rates  wrt the energy norm $\| \cdot \|_{h,m}$
for smooth solutions.
\end{remark}
%
\section{Numerical Results}
\label{NumericalResults}
The numerical results presented below have been performed in \gismo 
\footnote{\textit{Geometry plus Simulation Modules}, http://www.gs.jku.at/gismo} 
\cite{LMN:JuettlerLangerMantzaflarisMooreZulehner:2014a}. We used the sparse direct solver 
SuperLU to solve the resulting linear system \eqref{eqn:algebraicequation} 
of IgA equations. 
We present numerical results for both fixed 
(Subsection~\ref{subsec:FixedSpatialComputationalDomain}) and 
moving (Subsection~\ref{subsec:MovingSpatialComputationalDomain}) spatial 
computational domains in one and two dimensions in space.
In Subsection~\ref{subsec:ParallelSolution}, we present numerical results 
which demonstrate the efficiency of a standard parallel AMG preconditioned 
GMRES solver on massively parallel computers with several thousands
of cores.
We mention that $\theta$ was set to $0.1$ in all our numerical experiments
including the examples with moving spatial domains.
\subsection{Fixed Spatial Computational Domain}
\label{subsec:FixedSpatialComputationalDomain}

\subsubsection{Fixed one-dimensional spatial computational domain.}
\label{subsubsec:Fixedspatialcomputationaldomain1d}

We consider the one-dimensional spatial domain $\Omega =(0,1)$ and the time interval $(0,T)$ 
with $T=1$,
i.e., we have the space-time cylinder $Q = (0,1)^2$,
that can geometrically represented by  the knot vectors 
$\Xi_{1} = \{0, 0, 1, 1\}$ and $\Xi_{2} = \{0, 0, 1, 1\}$ 
in the IgA context. 
We solve our parabolic boundary value problem \eqref{eqn:ModelProblem},
and choose the data such that the solution is given by 
$u(x_1,t) = u(x,t) = \sin(\pi x) \sin(\pi t)$,
i.e. $f(x,t)=\partial_{t} u(x,t) - \Delta u(x,t)= 
\pi \sin(\pi x)( \cos(\pi t) + \pi \sin(\pi t) )$ in $Q$, 
$u_0 = 0$ on $\overline{\Omega}$, and $u$ obviously
vanishes on $\Sigma$. Thus, the compatibility condition between boundary and initial 
conditions holds.
The convergence behavior of the space-time IgA scheme 
with respect to the discrete norm $\| \cdot \|_h$ 
is shown in 
Tables~\ref{table:UnitSquareI:discretenorm} and \ref{table:UnitSquareII:discretenorm} 
by a series of $h$-refinement and by using B-splines of polynomial 
degrees $p = 1,2,3,4$.
After some saturation, we observe the optimal convergence rate $O(h^p)$ 
as  theoretically predicted by Theorem~\ref{thm:EnergyNormErrorEstimate} 
for smooth solutions.
Moreover, 
Tables~\ref{table:UnitSquareI} and \ref{table:UnitSquareII} 
show the $L_2$ errors and the corresponding rates 
for the same setting. We see that the  $L_2$ rates are 
asymptotically optimal as well, i.e. 
the $L_2$-error behaves like $O(h^{p+1})$.
%
\begin{figure}[ht!]
     \begin{center}
       \includegraphics[width = 0.60\textwidth]{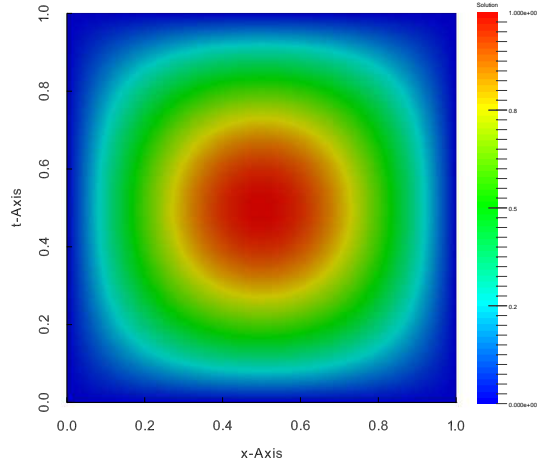}
    \end{center}
    \caption{Solution contours in the space-time cylinder $Q$ for  Example~\ref{subsubsec:Fixedspatialcomputationaldomain1d}.}
    \label{fig:UnitSsquare}
\end{figure}

\begin{table}[htb!]
 \centering
  \begin{tabular}{|l|l|l|l|l|l|l|l|} \hline
    \multicolumn{3}{|c|}{p = 1} &\multicolumn{3}{|c|}{p = 2 } \\
     \cline{1-6}   
       Dofs   &$\|u - u_h \|_{h}$ &Rate   &Dofs     &$\|u - u_h \|_{h}$     &Rate  \\  \hline
         4    &1.6782e+00         &0       & 9       &2.10399e-01            &0	    \\ 
         9    &7.28214e-01        &1.20    &16       &2.03729e-01            &0.04  \\
        25    &3.61278e-01        &1.01    &36       &3.98228e-02            &2.35  \\
        81    &1.79489e-01        &1.01    &100      &9.29436e-03            &2.10  \\
       289    &8.94084e-01        &1.01    &324      &2.27848e-03            &2.02  \\
      1089    &4.46132e-02        &1.00    &1156     &5.66197e-04            &2.01  \\
      4225    &2.22829e-02        &1.00    &4356     &1.41258e-04            &2.00  \\ 
     16641    &1.11354e-02        &1.00    &16900    &3.52865e-05            &2.00  \\ \hline
  \end{tabular}  
\caption{Errors and rates wrt $\| \cdot \|_{h}$ for Example~\ref{subsubsec:Fixedspatialcomputationaldomain1d} 
and degrees $p = 1$ and $p=2$.}
  \label{table:UnitSquareI:discretenorm}
\end{table}
%
\begin{table}[htb!]
 \centering
  \begin{tabular}{|l|l|l|l|l|l|l|l|} \hline
    \multicolumn{3}{|c|}{p = 3} &\multicolumn{3}{|c|}{p = 4 } \\
     \cline{1-6}   
       Dofs   &$\|u - u_h \|_{h}$  &Rate &Dofs  &$\|u - u_h \|_{h}$ &Rate \\  \hline
         16            &2.10106e-01    &0       &25       &6.70957e-03       &0		\\
         25            &2.73234e-02    &2.92    &36       &6.49630e-03       &0.05 	\\
         49            &5.08124e-03    &2.43    &64       &5.56376e-04       &3.55 	\\
        121            &5.73528e-04    &3.15    &144      &3.27142e-05       &4.10  	\\
        361            &6.93807e-05    &3.05    &400      &2.05481e-06       &4.00  	\\
       1225            &8.58843e-06    &3.01    &1296     &1.30057e-07       &4.00   	\\
       4489            &1.07029e-06    &3.00    &4624     &8.20252e-09       &4.00 	\\
       17161           &1.33647e-07    &3.00    &17424    &5.15378e-10       &4.00      \\ \hline
  \end{tabular}  
  \caption{Errors and rates wrt $\| \cdot \|_{h}$ for Example~\ref{subsubsec:Fixedspatialcomputationaldomain1d} 
  and degrees $p = 3$ and $p=4$.}
  \label{table:UnitSquareII:discretenorm}
\end{table}
\begin{table}[htb!]
 \centering
  \begin{tabular}{|l|l|l|l|l|l|l|l|} \hline
    \multicolumn{3}{|c|}{p = 1} &\multicolumn{3}{|c|}{p = 2 } \\
     \cline{1-6}   
       Dofs   &$\|u - u_h \|_{L_2(Q)}$ &Rate    &Dofs     &$\|u - u_h \|_{L_2(Q)}$ &Rate \\  \hline
         4    &5.000e-01               &0       & 9       &2.69186e-02             &0	  \\ 
         9    &1.21333e-01             &2.04    &16       &2.66817e-02             &0.03	  \\
        25    &3.03720e-02             &2.00    &36       &2.30767e-03             &3.53	  \\
        81    &7.47639e-03             &2.02    &100      &2.60187e-04             &3.15 	  \\
       289    &1.85552e-03             &2.01    &324      &3.16609e-05             &3.04    \\
      1089    &4.62271e-04             &2.00    &1156     &3.92785e-06             &3.01    \\
      4225    &1.15377e-04             &2.00    &4356     &4.89712e-07             &3.01    \\ 
     16641    &2.88212e-05             &2.00    &16900    &6.11484e-08             &3.00    \\ \hline
  \end{tabular}  
  \caption{$L_2$ errors and rates for Example~\ref{subsubsec:Fixedspatialcomputationaldomain1d} 
             using degrees $p = 1$ and $p=2$.}
  \label{table:UnitSquareI}
\end{table}
%
\begin{table}[htb!]
 \centering
  \begin{tabular}{|l|l|l|l|l|l|l|l|} \hline
    \multicolumn{3}{|c|}{p = 3} &\multicolumn{3}{|c|}{p = 4 } \\
     \cline{1-6}   
       Dofs   &$\|u - u_h \|_{L_2(Q)}$  &Rate    &Dofs     &$\|u - u_h \|_{L_2(Q)}$ &Rate \\  \hline
         16            &2.65068e-02     &0       &25       &5.49027e-04       &0		\\
         25            &2.32514e-03     &3.16    &36       &5.43598e-04       &0.01 	\\
         49            &3.10354e-04     &2.84    &64       &3.90240e-05       &3.80 	\\
        121            &1.63884e-05     &4.25    &144      &1.02403e-06       &5.25  	\\
        361            &9.72892e-07     &4.10    &400      &3.03989e-08       &5.07  	\\
       1225            &5.99946e-08     &4.02    &1296     &9.38131e-10       &5.02   	\\
       4489            &3.73705e-09     &4.01    &4624     &2.92231e-11       &5.00 	\\
       17161           &2.33370e-10     &4.00    &17424    &9.15973e-13       &5.00      \\ \hline
  \end{tabular}  
  \caption{$L_2$ errors and rates for Example~\ref{subsubsec:Fixedspatialcomputationaldomain1d} 
             using degrees $p = 3$ and $p=4$.}
  \label{table:UnitSquareII}
\end{table}
\subsubsection{Fixed two-dimensional spatial computational domain.}
\label{subsubsec:Fixedspatialcomputationaldomain2d}
As a second example we consider  the two-dimensional spatial domain $\Omega =(0,1)^2$ and the time interval $(0,T)$ 
with $T=1$, i.e., we have the space-time cylinder $Q = (0,1)^3$, 
that can geometrically be represented by the knot vectors 
$\Xi_{1} = \{0, 0, 1, 1\},$ $\Xi_{2} = \{0, 0, 1, 1\}$ and $\Xi_{3} = \{0, 0, 1, 1\}$ 
in the context of IgA. 
We solve our model problem \eqref{eqn:ModelProblem},
and again choose the data such that the solution is given by 
$u(x,t) = \sin(\pi x_1)\sin(\pi x_2)  \sin(\pi t)$,
i.e.
$f(x,t)=\partial_{t} u(x,t) - \Delta u(x,t) 
=  \pi  \sin(\pi x_1)\sin(\pi x_2) (\cos(\pi t) + 2\pi \sin(\pi t) )$ in $Q$, 
$u_0 = 0$ on $\overline{\Omega}$, and $u$ obviously
vanishes on $\Sigma$. Thus, the compatibility condition between boundary and initial 
conditions holds.
In Figure~\ref{fig:UnitCube}, we present the solution contours of the problem in $\mathbb{R}^3,$
where we have sliced the domain along the $t$-axis.
The convergence behavior of the space-time IgA scheme with respect to the discrete norm $\| \cdot \|_h$ 
is shown in Tables~\ref{table:UnitCubeI:discrete} and \ref{table:UnitCubeII:discrete}
by a series of $h$-refinement and by using B-splines of polynomial 
degrees $p = 1,2,3,4$.
After some saturation, we observe the optimal convergence rate $O(h^p)$ 
as  theoretically predicted by Theorem~\ref{thm:EnergyNormErrorEstimate} 
for smooth solutions.
Moreover, Tables~\ref{table:UnitCubeI} and \ref{table:UnitCubeII} show the $L_2$ errors and the corresponding rates 
for the same setting. We see that the  $L_2$ rates are asymptotically optimal as well, i.e. 
the $L_2$-error behaves like $O(h^{p+1})$.

\begin{figure}[ht!]
     \begin{center}
       \includegraphics[width = 0.70\textwidth]{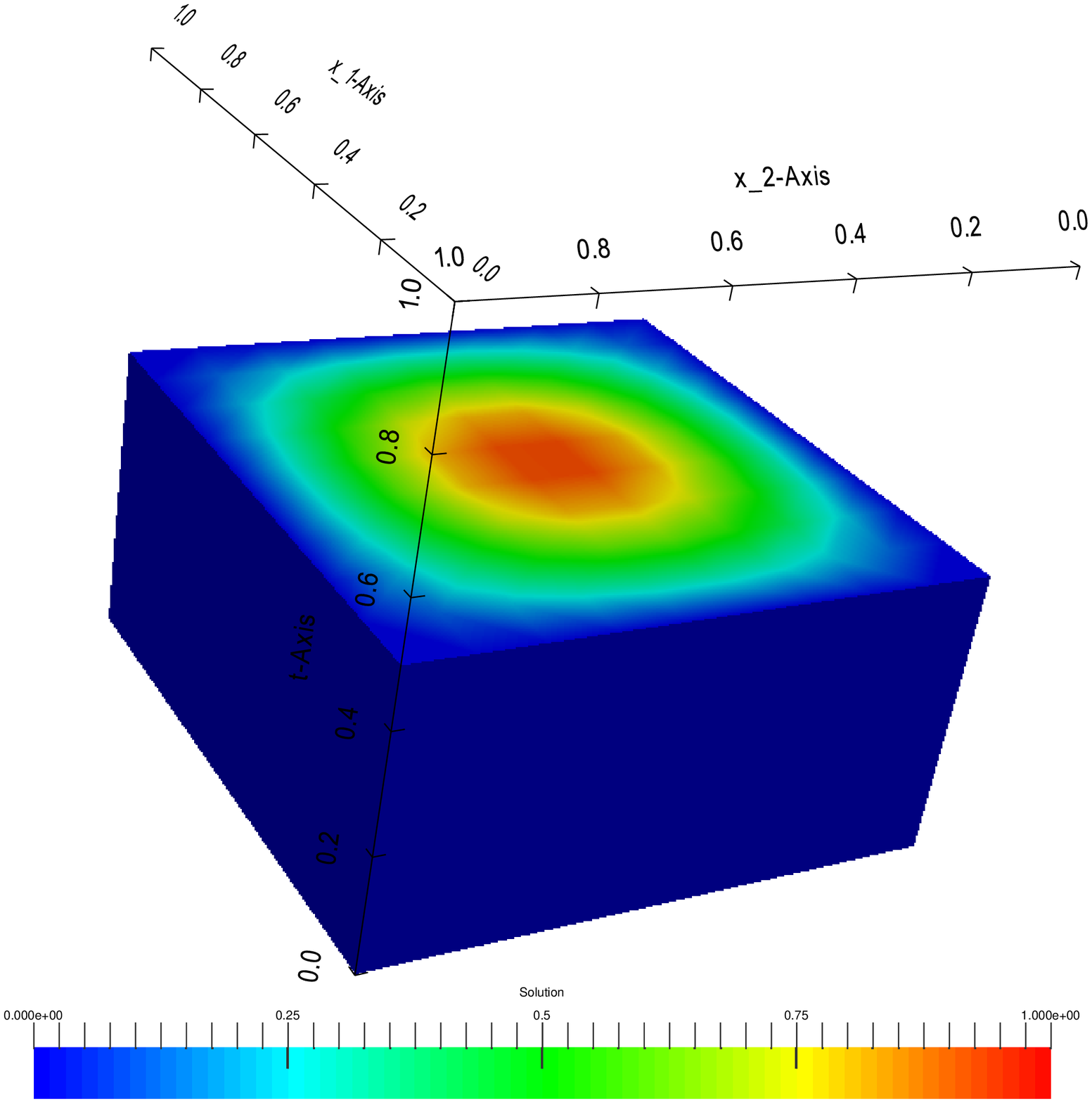}
    \end{center}
    \caption{Solution contours in the space-time cylinder $Q$ at 
	    $t = 0.5$
	    for Example~\ref{subsubsec:Fixedspatialcomputationaldomain2d}. }\label{fig:UnitCube}
\end{figure}
\begin{table}[htb!]
 \centering
  \begin{tabular}{|l|l|l|l|l|l|l|l|} \hline
    \multicolumn{3}{|c|}{p = 1} &\multicolumn{3}{|c|}{p = 2 } \\
     \cline{1-6}   
       Dofs        &$\|u - u_h \|_h$   &Rate    &Dofs  &$\|u - u_h \|_h$ &Rate     \\  \hline
         8         &1.63740            &0       &27    &2.15440e-01      &0        \\
        27         &7.39981e-01        &1.15    &64    &2.11247e-01      &0.03     \\
       125         &3.60495e-01        &1.04    &216   &3.98871e-02      &2.40     \\
      729          &1.79065e-01        &1.01    &1000  &9.27926e-03      &2.10     \\
      4913         &8.92787e-02        &1.00    &5832  &2.27556e-03      &2.03     \\
     35937         &4.45779e-02        &1.00    &39304 &5.65772e-04      &2.01     \\  \hline
  \end{tabular}  
 \caption{Errors and rates wrt $\| \cdot \|_{h}$ for Example~\ref{subsubsec:Fixedspatialcomputationaldomain2d} 
  and degrees $p = 1$ and $p=2$.}\label{table:UnitCubeI:discrete}
\end{table}

\begin{table}[htb!]
 \centering
  \begin{tabular}{|l|l|l|l|l|l|l|l|} \hline
    \multicolumn{3}{|c|}{p = 3} &\multicolumn{3}{|c|}{p = 4 } \\
     \cline{1-6}   
       Dofs   &$\|u - u_h \|_h$  &Rate &Dofs       &$\|u - u_h \|_h$ &Rate \\  \hline
         64           &2.16883e-01     &0          &125          &6.67416e-03             &0          \\
        125           &2.75120e-02     &2.97       &216          &7.69213e-03             &0.03       \\
        343           &5.09465e-03     &2.43       &512          &3.55820e-03             &3.55       \\
       1331           &5.72742e-04     &3.15       &1728         &3.26623e-05             &4.10       \\
       6859           &6.92964e-05     &3.04       &8000         &2.05215e-06             &4.00       \\ 
      42875           &8.58214e-06     &3.01       &46656        &1.37611e-07             &4.00       \\  \hline
  \end{tabular}  
 \caption{Errors and rates wrt $\| \cdot \|_{h}$ for Example~\ref{subsubsec:Fixedspatialcomputationaldomain2d} 
  and degrees $p = 3$ and $p=4$.}\label{table:UnitCubeII:discrete}
\end{table}

\begin{table}[htb!]
 \centering
  \begin{tabular}{|l|l|l|l|l|l|l|l|} \hline
    \multicolumn{3}{|c|}{p = 1} &\multicolumn{3}{|c|}{p = 2 } \\
     \cline{1-6}   
       Dofs   &$\|u - u_h \|_{L_2(Q)}$ &Rate &Dofs     &$\|u - u_h \|_{L_2(Q)}$ &Rate  \\  \hline
         8         &3.65528e-01        &0       &27    &2.39153e-02     &0             \\
        27         &9.56396e-02        &1.93    &64    &2.37388e-02     &0.01          \\
       125         &2.32679e-02        &2.03    &216   &1.99848e-03     &3.57          \\
      729          &5.75358e-03        &2.01    &1000  &2.22710e-04     &3.17          \\
      4913         &1.43171e-03        &2.01    &5832  &2.70486e-05     &3.04          \\
     35937         &3.57195e-04        &2.00    &39304 &3.35780e-06     &3.00          \\  \hline
  \end{tabular}  
 \caption{$L_2$ errors and rates for Example~\ref{subsubsec:Fixedspatialcomputationaldomain2d}
          using degree $p = 1$ and $p=2.$}\label{table:UnitCubeI}
\end{table}

\begin{table}[htb!]
 \centering
  \begin{tabular}{|l|l|l|l|l|l|l|l|} \hline
    \multicolumn{3}{|c|}{p = 3} &\multicolumn{3}{|c|}{p = 4 } \\
     \cline{1-6}   
       Dofs   &$\|u - u_h \|_{L_2(Q)}$  &Rate   &Dofs         &$\|u - u_h \|_{L_2(Q)}$ &Rate \\  \hline
         64           &2.39325e-02      &0      &125          &4.75391e-04             &0             \\
        125           &2.03108e-03      &3.56   &216          &4.73425e-04             &0.01       \\
        343           &2.68174e-04      &2.92   &512          &3.34666e-05             &3.82       \\
       1331           &1.41715e-05      &4.24   &1728         &8.74291e-07             &5.26       \\
       6859           &8.42223e-07      &4.07   &8000         &2.60544e-08             &5.07       \\  
      42875           &5.19528e-08      &4.01   &46656        &8.07120e-10             &5.01       \\ \hline
  \end{tabular}  
 \caption{$L_2$ errors and rates for Example~\ref{subsubsec:Fixedspatialcomputationaldomain2d}
            using degree $p = 3$ and $p = 4.$ }\label{table:UnitCubeII}
\end{table}

\subsection{Moving Spatial Computational Domain}
\label{subsec:MovingSpatialComputationalDomain}

\subsubsection{A simple  one-dimensional moving spatial computational domain.}
\label{subsubsec:Simplemovingspatialcomputationaldomain2D}
%

Now we consider the one-dimensional moving computational domain 
$\Omega(t) = \{x=x_1 \in \mathbb{R}^1: a(t) < x < b(t)\}$ 
with $t = (0,T)$,
where $a(t) = -t/2$, $b(t)= 1+t/2$, and $T=1$. 
The space-time cylinder 
$Q = \{(x,t) \in \mathbb{R}^2:\; x \in \Omega(t) ,t \in (0,T)\} \subset \mathbb{R}^{2}$
is obviously a fixed domain in the space-time world $\mathbb{R}^{2}$. 
It can geometrically be represented by the knot vectors 
$\Xi_{1} = \{0, 0, 1, 1\}$ and $\Xi_{2} = \{0, 0, 1, 1\}$ 
and the control points 
$\mathbf{P}_{1,1} = (0,0)$,
$\mathbf{P}_{2,1} = (1,0)$,
$\mathbf{P}_{2,2} = (1.5,1.0)$,
and
$\mathbf{P}_{1,2} = (-0.5,1.0)$
in the context of IgA. 
We solve our model problem \eqref{eqn:ModelProblem},
and again choose the data such that the solution is given by $u(x,t) = \sin(\pi x)  \sin(\pi t)$,
i.e. $f(x,t)=\partial_{t} u(x,t) - \Delta u(x,t)= ( \pi  \sin(\pi x)) (\cos(\pi t) + \pi \sin(\pi t) )$ in $Q$, 
$u_0 = 0$ on $\overline{\Omega}$,
and $u(x,t) = \sin(\pi x)  \sin(\pi t)$ on $\Sigma$.
Thus, the compatibility condition between boundary and initial conditions holds.
The space-time computational domain $Q$ and the solution is drawn in Figure~\ref{fig:Moving2DLinear}.
The convergence behavior of the space-time IgA scheme with respect to the discrete norm $\| \cdot \|_{h,m}$ 
is shown in Tables~\ref{table:SimpleMovingI:discrete} and \ref{table:SimpleMovingII:discrete}
by a series of $h$-refinement and by using B-splines of polynomial 
degrees $p = 1,2,3,4$.
After some saturation, we observe the optimal convergence rate $O(h^p)$ for  $p \geq	 2$ 
as theoretically predicted by  Theorem~\ref{thm:errorestimatemovingdomain}
for smooth solutions.
Moreover, Tables~\ref{table:UnitCubeI} and \ref{table:UnitCubeII} show the $L_2$ errors and the corresponding rates 
for the same setting. We see that the  $L_2$ rates are asymptotically optimal for $p\ge2$ as well, 
i.e. they behave like $O(h^{p+1})$.
For  $p=1$, we also  observe the optimal rate in the discrete norm,
cf. also Remark~ \ref{rem:p=1:movingdomain}, 
whereas the $L_2$-rate does not reach the optimal order 2. 
\begin{figure}[htb!]
     \begin{center}
       \includegraphics[width = 0.7\textwidth, height = 0.35\textheight]{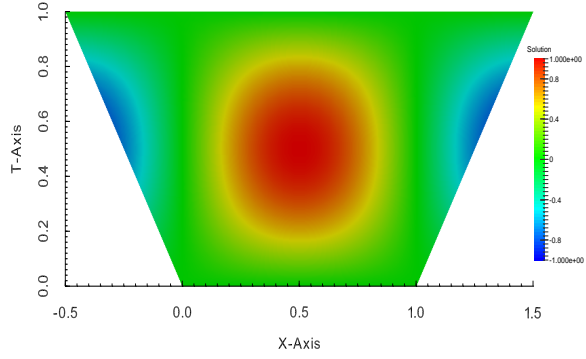}
    \end{center}
    \caption{Solution contours in the space-time cylinder $Q$ for Example~\ref{subsubsec:Simplemovingspatialcomputationaldomain2D}.}
    \label{fig:Moving2DLinear}
\end{figure}

\begin{table}[htb!]
 \centering
  \begin{tabular}{|l|l|l|l|l|l|l|l|} \hline
    \multicolumn{3}{|c|}{p = 1} &\multicolumn{3}{|c|}{p = 2 } \\
     \cline{1-6}   
       Dofs           &$\| u - u_h  \|_{h,m}$&Rate &Dofs   &$\| u - u_h  \|_{h,m}$ &Rate \\  \hline
         4            &2.31256       &0          &9           &8.3047e-01    &0     \\
         9            &8.7222e-01    &1.41       &16          &7.4075e-01    &0.17  \\
         25           &5.9385e-01    &0.55       &36          &1.3091e-01    &2.50  \\
         81           &3.0387e-01    &0.97       &100         &2.9977e-02    &2.13  \\
        289           &1.5240e-01    &1.00       &324         &7.2872e-03    &2.04  \\
       1089           &7.6181e-02    &1.00       &1156        &1.8068e-03    &2.01  \\
       4225           &3.8070e-02    &1.00       &4356        &4.5056e-04    &2.00  \\ 
      16641           &1.9028e-02    &1.00       &16900       &1.1255e-04    &2.00  \\ \hline
  \end{tabular}  
  \caption{Errors and rates wrt $\| \cdot \|_{h,m}$ for Example~\ref{subsubsec:Simplemovingspatialcomputationaldomain2D} 
  and degrees $p = 1$ and $p=2$.}\label{table:SimpleMovingI:discrete}
\end{table}
\begin{table}[htb!]
 \centering
  \begin{tabular}{|l|l|l|l|l|l|l|l|} \hline
    \multicolumn{3}{|c|}{p = 3} &\multicolumn{3}{|c|}{p = 4 } \\
     \cline{1-6}   
       Dofs   &$\|u - u_h  \|_{h,m}$  &Rate       &Dofs         &$\| u - u_h  \|_{h,m}$ &Rate \\  \hline
         16           &7.87827e-01      &0          &25           &9.58143e-02   &0    \\
         25           &7.74683e-02      &3.35       &36           &7.29388e-02   &0.35 \\
         49           &2.68888e-02      &1.53       &64           &7.25760e-03   &3.31 \\
        121           &2.54304e-03      &3.40       &144          &3.25434e-04   &4.48\\
        361           &2.97843e-04      &3.09       &400          &1.83582e-05   &4.15\\
       1225           &3.69216e-05      &3.01       &1296         &1.11825e-06   &4.04 \\
       4489           &4.62600e-06      &3.00       &4624         &6.95208e-08   &4.01\\ 
      17161           &5.79880e-07      &3.00       &17424        &4.34224e-09   &4.00 \\ \hline
  \end{tabular}  
 \caption{Errors and rates wrt $\| \cdot \|_{h,m}$ for Example~\ref{subsubsec:Simplemovingspatialcomputationaldomain2D}
            using degree $p = 3$ and $p=4.$}\label{table:SimpleMovingII:discrete}
\end{table}
\begin{table}[htb!]
 \centering
  \begin{tabular}{|l|l|l|l|l|l|l|l|} \hline
    \multicolumn{3}{|c|}{p = 1} &\multicolumn{3}{|c|}{p = 2 } \\
     \cline{1-6}   
       Dofs   &$\|u - u_h \|_{L_2(Q)}$  &Rate &Dofs   &$\|u - u_h \|_{L_2(Q)}$ &Rate \\  \hline
         4            &8.7462          &0          &9           &1.9450e-01     &0     \\
         9            &1.9970e-01      &2.13       &16          &1.41565e-01    &0.46  \\
         25           &7.3226e-02      &1.45       &36          &1.09044e-02    &3.70  \\
         81           &1.8794e-02      &1.96       &100         &1.12045e-03    &3.28  \\
        289           &5.0344e-03      &1.90       &324         &1.31179e-04    &3.09  \\
       1089           &1.5998e-03      &1.65       &1156        &1.60564e-05    &3.03  \\
       4225           &6.4746e-04      &1.31       &4356        &1.99240e-06    &3.01  \\ 
      16641           &3.0437e-04      &1.10       &16900       &2.48337e-07    &3.00  \\ \hline
  \end{tabular}  
 \caption{$L_2$ errors and rates for Example~\ref{subsubsec:Simplemovingspatialcomputationaldomain2D}
            using degree $p = 1$ and $p=2.$}\label{table:SimpleMovingI}
\end{table}
\begin{table}[htb!]
 \centering
  \begin{tabular}{|l|l|l|l|l|l|l|l|} \hline
    \multicolumn{3}{|c|}{p = 3} &\multicolumn{3}{|c|}{p = 4 } \\
     \cline{1-6}   
       Dofs   &$\|u - u_h \|_{L_2(Q)}$   &Rate       &Dofs   &$\|u - u_h \|_{L_2(Q)}$ &Rate \\  \hline
         16           & 1.69684e-01      &0          &25           &1.38383e-02     &0    \\
         25           & 1.02458e-02      &4.05       &36           &1.01827e-02     &0.44 \\
         49           & 2.34449e-03      &2.13       &64           &6.76183e-04     &3.91 \\
        121           & 9.90173e-05      &4.57       &144          &1.2387e-05      &5.77 \\
        361           & 5.56308e-06      &4.15       &400          &3.23506e-07     &5.26 \\
       1225           & 3.40489e-07      &4.03       &1296         &9.61797e-09     &5.07 \\
       4489           & 2.12529e-08      &4.00       &4624         &2.96841e-10     &5.02 \\ 
      17161           & 1.33061e-09      &4.00       &17424        &9.24929e-12     &5.00 \\ \hline
  \end{tabular}  
 \caption{$L_2$ errors and rates for Example~\ref{subsubsec:Simplemovingspatialcomputationaldomain2D}
            using degree $p = 3$ and $p=4.$}\label{table:SimpleMovingII}
\end{table}

\subsubsection{Curvilinearly  moving one-dimensional  spatial computational domain.}
\label{subsubsec:Curvilinearmovingspatialcomputationaldomain1D}
We consider again an
one-dimensional  moving spatial computational domain of the form
$\Omega(t) = \{x=x_1 \in \mathbb{R}^1: a(t) < x < b(t)\}$, 
$t \in (0,1)$,
where now the movement is described by the 
functions $a(t) = t(1 -t)/2$ and $b(t)=1-t(1 -t)/2 $ 
leading to the space-time cylinder 
$Q = \{(x,t) \in \mathbb{R}^2:\; x \in \Omega(t) ,t \in (0,T)\} \subset \mathbb{R}^{2}$
with a curved surface area $\Sigma$, see Figure~\ref{fig:Moving2D}.
The space-time cylinder $Q$ can also be represented by the knot vectors 
$\Xi_{1} = \{0, 0, 1, 1\}$ and $\Xi_{2} = \{0, 0, 0, 1, 1, 1\}$ 
and the corresponding control points 
$\mathbf{P}_{1,1} = (0,0)$,
$\mathbf{P}_{2,1} = (1,0)$,
$\mathbf{P}_{2,2} = (0.75,0.5)$,
$\mathbf{P}_{2,3} = (1,1)$,
$\mathbf{P}_{1,3} = (0,1)$
and
$\mathbf{P}_{1,2} = (0.25,0.5)$
in the context of IgA, see also  Figure~\ref{fig:Moving2D}.
We solve our model problem \eqref{eqn:ModelProblem},
and again choose the data such that the solution is given by $u(x,t) = \sin(\pi x)  \sin(\pi t)$,
i.e. $f(x,t)=\partial_{t} u(x,t) - \Delta u(x,t)= ( \pi  \sin(\pi x)) (\cos(\pi t) + \pi \sin(\pi t) )$ in $Q$, 
$u_0 = 0$ on $\overline{\Omega}$,  and $u(x,t)  = \sin(\pi x)  \sin(\pi t)$ on $\Sigma$. 
Thus, the compatibility condition between boundary and initial 
conditions hold.
The convergence behavior of the space-time IgA scheme with respect to the discrete norm $\| \cdot \|_{h,m}$ 
is shown in Tables~\ref{table:CurvilinearI:discrete} and \ref{table:CurvilinearII:discrete}
by a series of $h$-refinement and by using B-splines of polynomial 
degrees $p = 1,2,3,4$.
After some saturation, we observe the optimal convergence rate $O(h^p)$ for $p \ge 2$ 
as theoretically predicted by Theorem~\ref{thm:errorestimatemovingdomain} 
for smooth solutions.
Moreover, Tables~\ref{table:CurvilinearI} and \ref{table:CurvilinearII} show the $L_2$ errors and the corresponding rates 
for the same setting. We see that the  $L_2$ rates are asymptotically optimal for $p\ge 2$ as well, 
i.e. they behave like $O(h^{p+1})$.
For  $p=1$, we also  observe the optimal rate in the discrete norm,
cf. also Remark~\ref{rem:p=1:movingdomain}, 
whereas the $L_2$-rate does not reach the optimal order 2. 

\begin{figure}[htb!]
     \begin{center}
      \includegraphics[width  = 0.46\textwidth, height = 0.26\textheight]{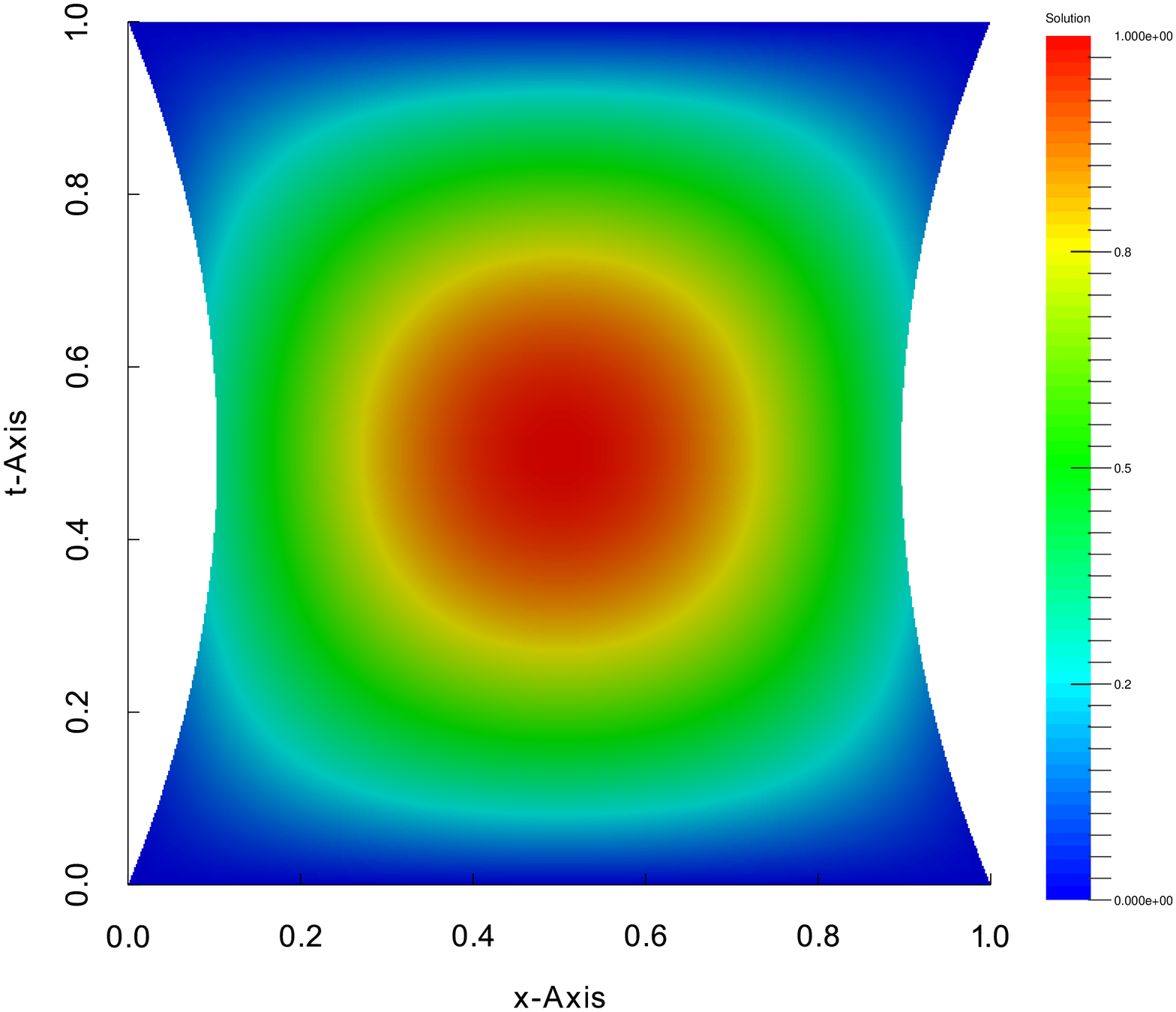}
      \hspace{8.00mm}
       \includegraphics[width = 0.45\textwidth,  height = 0.28\textheight]{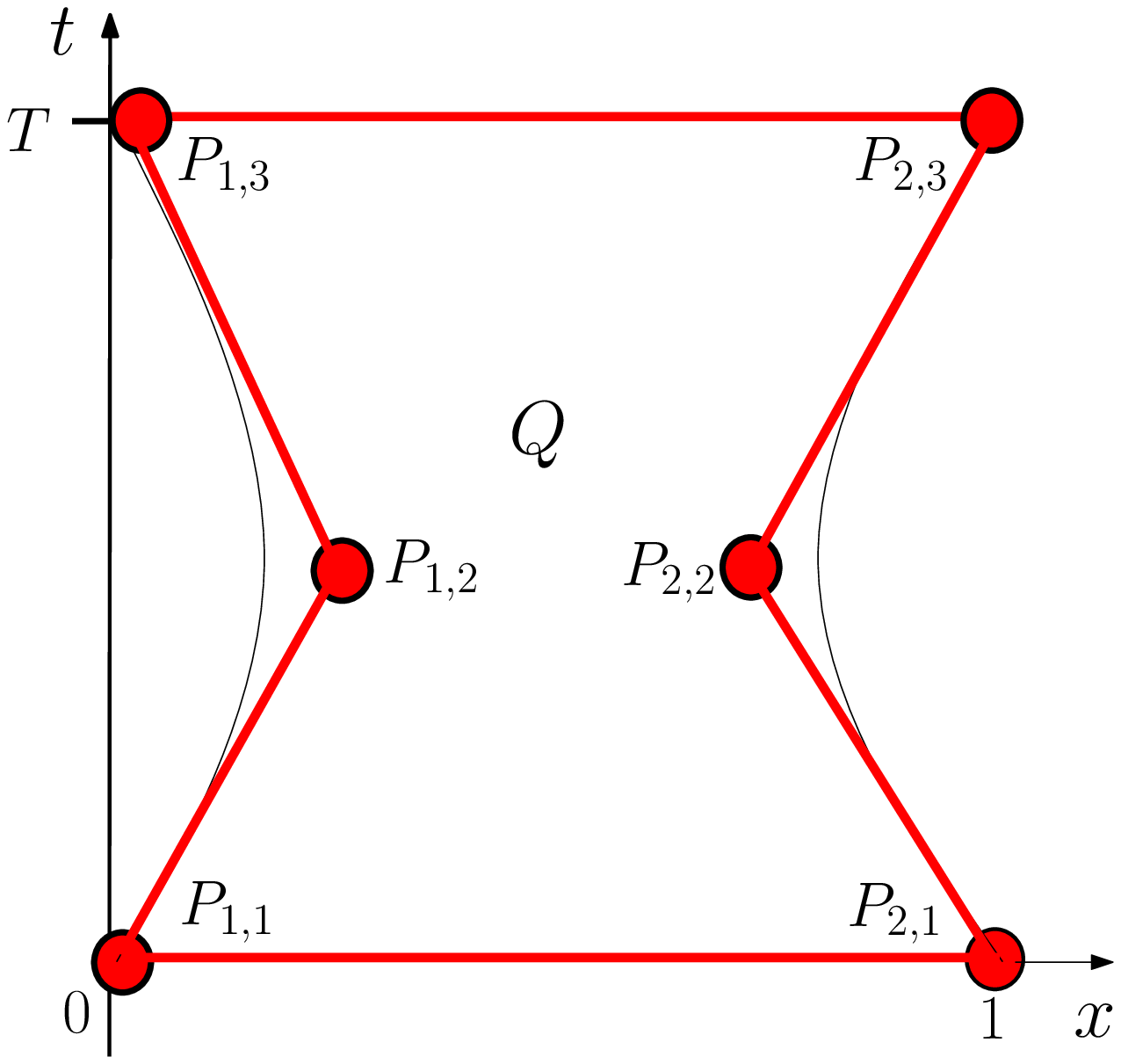}
    \end{center}
    \caption{Solution contours 
    for 
    Example~\ref{subsubsec:Curvilinearmovingspatialcomputationaldomain1D} (left) and the control points (right).} 
      \label{fig:Moving2D}
\end{figure}
\begin{table}[htb!]
 \centering
  \begin{tabular}{|l|l|l|l|l|l|l|l|} \hline
    \multicolumn{3}{|c|}{p = 1} &\multicolumn{3}{|c|}{p = 2 } \\
     \cline{1-6}   
       Dofs   &$\| u - u_h \|_{h,m}$&Rate &Dofs   &$\| u - u_h  \|_{h,m}$ &Rate \\  \hline
         4            &1.40608e+00   &0          &9           &1.79646e-01    &0                      \\
         9            &6.28191e-01   &1.15       &16          &1.66950e-01    &0.11  \\
         25           &3.02013e-01   &1.05       &36          &2.52637e-02    &2.72  \\
         81           &1.47951e-01   &1.02       &100         &5.45976e-03    &2.21  \\
        289           &7.33135e-02   &1.01       &324         &1.29401e-03    &2.08  \\
       1089           &3.65079e-02   &1.01       &1156        & 3.16214-04    &2.03  \\
       4225           &1.82188e-02   &1.00       &4356        & 7.82250e-05    &2.02  \\ 
      16641           &9.10085e-03   &1.00       &16900       & 1.94575e-05    &2.01  \\ \hline
  \end{tabular}  
  \caption{Errors and rates wrt $\| \cdot \|_{h,m}$ for 
          Example~\ref{subsubsec:Curvilinearmovingspatialcomputationaldomain1D} 
          and degrees $p = 1$ and $p=2$.}\label{table:CurvilinearI:discrete}
\end{table}
\begin{table}[htb!]
 \centering
  \begin{tabular}{|l|l|l|l|l|l|l|l|} \hline
    \multicolumn{3}{|c|}{p = 3} &\multicolumn{3}{|c|}{p = 4 } \\
     \cline{1-6}   
       Dofs   &$\| u - u_h \|_{h,m}$    &Rate       &Dofs       &$\|u - u_h  \|_{h,m}$ &Rate \\  \hline
         16           &2.25659e-01      &0          &25           &1.55287e-02         &0    \\
         25           &1.95569e-02      &3.18       &36           &9.35760e-03         &0.73 \\
         49           &3.37111e-02      &2.56       &64           &3.74209e-04         &4.64 \\
        121           &3.37580e-03      &3.25       &144          &1.93936e-05         &4.27\\
        361           &3.95998e-04      &3.09       &400          &9.83348e-07         &4.30\\
       1225           &4.83941e-06      &3.03       &1296         &5.54882e-08         &4.14 \\
       4489           &5.99023e-07      &3.01       &4624         &3.30365e-09         &4.07\\ 
      17161           &7.45335e-08      &3.01       &17424        &2.01770e-10         &4.03 \\ \hline
  \end{tabular}  
 \caption{Errors and rates wrt $\| \cdot \|_{h,m}$ for 
          Example~\ref{subsubsec:Curvilinearmovingspatialcomputationaldomain1D}
            using degree $p = 3$ and $p=4.$}\label{table:CurvilinearII:discrete}
\end{table}
\begin{table}[htb!]
 \centering
  \begin{tabular}{|l|l|l|l|l|l|l|l|} \hline
    \multicolumn{3}{|c|}{p = 1} &\multicolumn{3}{|c|}{p = 2 } \\
     \cline{1-6}   
       Dofs   &$\|u - u_h \|_{L_2(Q)}$&Rate      &Dofs   &$\|u - u_h \|_{L_2(Q)}$ &Rate \\  \hline
         4            &4.03831e-01   &0          &9           &2.93862e-02    &0                      \\
         9            &9.12333e-02   &2.14       &16          &3.21213e-02    &0.13  \\
         25           &2.33973e-02   &1.96       &36          &2.74769e-03    &3.55  \\
         81           &5.79121e-03   &2.01       &100         &3.10332e-04    &3.15  \\
        289           &1.52473e-03   &1.92       &324         &3.77756e-05    &3.03  \\
       1089           &4.68079e-04   &1.70       &1156        &4.68649e-06    &3.01  \\
       4290           &1.82024e-04   &1.36       &4356        &5.84132e-07    &3.00  \\ 
      16770           &8.37613e-05   &1.12       &16900       &7.29169e-08    &3.00  \\ \hline
  \end{tabular}  
 \caption{$L_2$ errors and rates for Example~\ref{subsubsec:Curvilinearmovingspatialcomputationaldomain1D}
            using degree $p = 1$ and $p=2.$}\label{table:CurvilinearI}
\end{table}
\begin{table}[htb!]
 \centering
  \begin{tabular}{|l|l|l|l|l|l|l|l|} \hline
    \multicolumn{3}{|c|}{p = 3} &\multicolumn{3}{|c|}{p = 4 } \\
     \cline{1-6}   
       Dofs   &$\|u - u_h \|_{L_2(Q)}$   &Rate       &Dofs         &$\|u - u_h \|_{L_2(Q)}$ &Rate \\  \hline
         16           & 3.22870e-02      &0          &25           &1.00347e-03    &0   \\
         25           & 2.61893e-03      &3.62       &36           &9.49495e-04   &0.10 \\
         49           & 4.05805e-04      &2.69       &64           &5.65065e-05   &4.10 \\
        121           & 1.99434e-05      &4.34       &144          &1.96066e-06   &4.85 \\
        361           & 1.15336e-06      &4.11       &400          &6.03483e-08   &5.02 \\
       1225           & 7.10583e-08      &4.02       &1296         &1.88123e-09   &5.00 \\
       4489           & 4.43879e-09      &4.00       &4624         &5.88213e-11   &5.00 \\ 
      17161           & 2.77796e-10      &4.00       &17424        &1.83916e-12   &5.00 \\ \hline
  \end{tabular}  
 \caption{$L_2$ errors and rates for Example~\ref{subsubsec:Curvilinearmovingspatialcomputationaldomain1D}
            using degree $p = 3$ and $p=4.$}\label{table:CurvilinearII}
\end{table}

\subsubsection{Curvilinearly  moving two-dimensional  spatial computational domain.}
\label{subsubsec:Curvilinearmovingspatialcomputationaldomain2D}
We now consider the curvilinearly  moving two-dimensional  spatial computational domain 
$\Omega(t) = \{x=(x_1,x_2)  \in \mathbb{R}^2: a(t) < x_1 < b(t), 0 < x_2 < 1 \}$ 
in $\mathbb{R}^2$, 
where $a(t) = t(1 -t)/2$, $b(t)=1-t(1 -t)/2$, and $t$ is running 
from $0$ to $T=1$, 
%
leading to the space-time cylinder 
$Q = \{(x,t) \in \mathbb{R}^3: \; x \in \Omega(t) ,t \in (0,T)\} \subset \mathbb{R}^{3}$
 that is fixed in the space-time world $\mathbb{R}^{3}$. 
In the context of IgA, $Q$ can geometrically be represented by the knot vectors 
$\Xi_{1} = \{0, 0, 1, 1\}$, $\Xi_{2} = \{0, 0, 1, 1\}$ and $\Xi_{3} = \{0, 0, 0, 1, 1, 1\}$,
and the control points given in Figure~\ref{fig:Moving3D} (right). 
We solve our model problem \eqref{eqn:ModelProblem},
and again choose the data such that the solution is given by 
$u(x,t) = \sin(\pi x_1) \sin(\pi x_2)  \sin(\pi t)$,
i.e. $f(x,t)=\partial_{t} u(x,t) - \Delta u(x,t)=  ( \pi  \sin(\pi x_1)\sin(\pi x_2)) (\cos(\pi t) + 2\pi \sin(\pi t) )$ in $Q$, $u_0 = 0$ on $\overline{\Omega}$, and $u(x,t) = \sin(\pi x_1) \sin(\pi x_2)  \sin(\pi t)$ on $\Sigma$. 
In Figure~\ref{fig:Moving3D} (left), we present the solution contours of the problem in $\mathbb{R}^3$
at $t= 0.5$. 
The convergence behavior of the space-time IgA scheme with respect to the discrete norm $\| \cdot \|_{h,m}$ 
is shown in Tables~\ref{table:Curvilinear2DI:discrete} and \ref{table:Curvilinear2DII:discrete}
by a series of $h$-refinement and by using B-splines of polynomial degrees $p = 1,2,3,4$. 
After some saturation, we observe the optimal convergence rate $O(h^p)$ for $p \ge 2$ 
as theoretically predicted by Theorem~\ref{thm:errorestimatemovingdomain} for smooth solutions.
Moreover, Tables~\ref{table:Curvilinear2DI} and \ref{table:Curvilinear2DII} show the $L_2$ errors and the corresponding rates 
for the same setting. We see that the  $L_2$ rates are asymptotically optimal for $p\ge 2$ as well, i.e. they behave like $O(h^{p+1})$.
%
For  $p=1$, we also  observe the optimal rate in the discrete norm,
cf. also Remark~ \ref{rem:p=1:movingdomain}, 
whereas the $L_2$-rate does not reach the optimal order 2. 
\begin{figure}[htb!]
     \begin{center}
       \includegraphics[width = 0.45\textwidth]{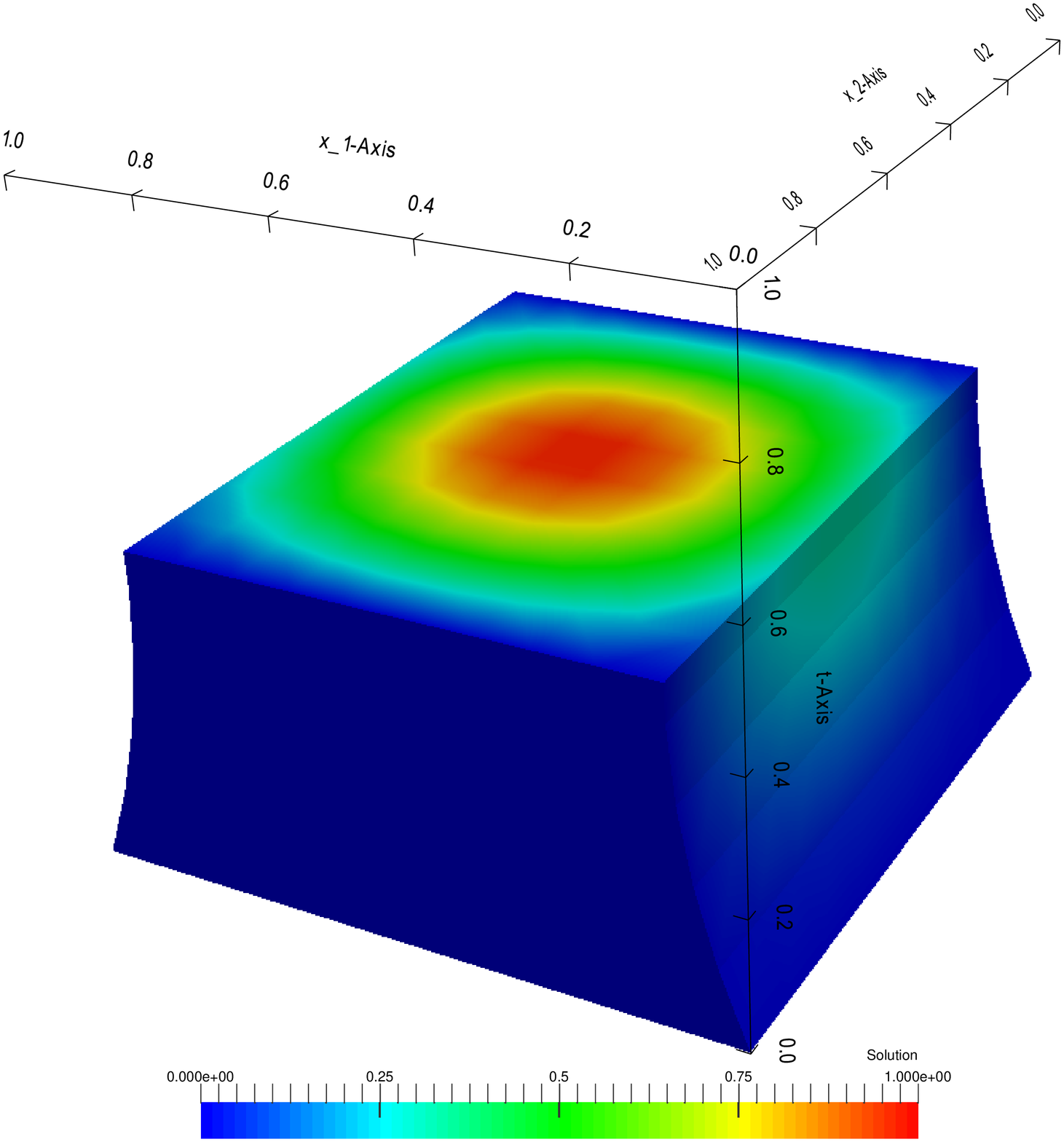}
       \hspace{8.00mm}
       \includegraphics[width = 0.43\textwidth,height = 0.29\textheight]{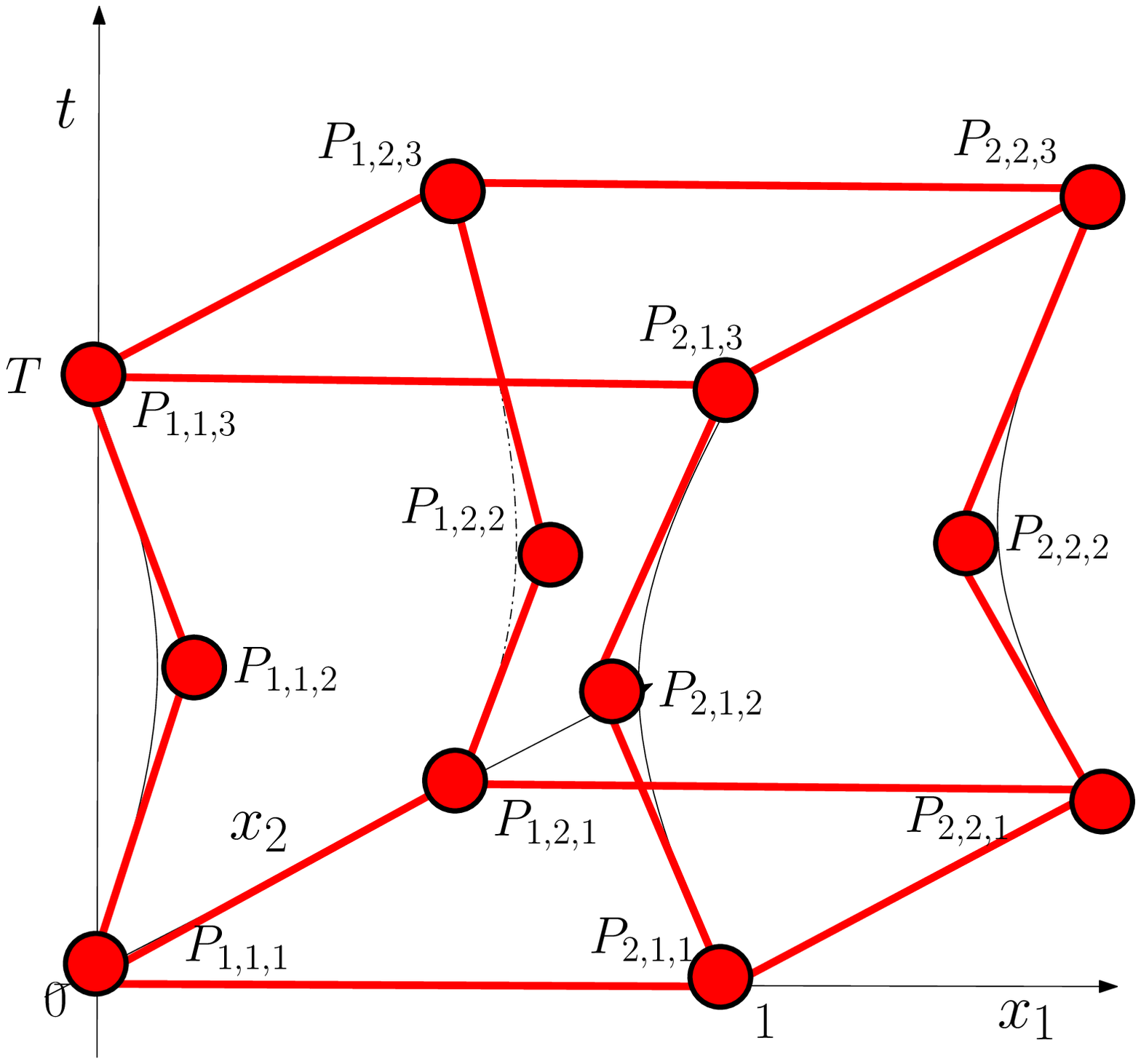}
    \end{center}
   \caption{Solution contours 
    for 
    Example~\ref{subsubsec:Curvilinearmovingspatialcomputationaldomain2D} (left) and the control points (right) 
      are given by
      $\mathbf{P}_{i_1,i_2, i_3} = \{$ $(0,0,0),$ $(1,0,0),$ $(0,1,0),$ $(1,1,0),$ $(0.25,0,0.50),$ $(0.75,0,0.5),$
      $(0.25,1,0.5),$ $(0.75,1,0.5),$ $(0,0,1),$ $(1,0,1),$ $(0,1,1),$ $(1,1,1)$\} 
      for $i_1 = 1,2$, $i_2 = 1,2$ and $i_3 = 1,2,3.$}
      \label{fig:Moving3D}
\end{figure}

\begin{table}[htb!]
 \centering
  \begin{tabular}{|l|l|l|l|l|l|l|l|} \hline
    \multicolumn{3}{|c|}{p = 1} &\multicolumn{3}{|c|}{p = 2 } \\
     \cline{1-6}   
       Dofs   &$\|u - u_h \|_{h,m}$    &Rate &Dofs   &$\|u - u_h \|_{h,m}$   &Rate  \\  \hline
         8          &1.57742e+00     &0          &27           &2.5829e-01   &0     \\
        27          &6.84322e-01     &1.20       &64           &2.0891e-01   &0.31  \\
       125          &3.25869e-01     &1.07       &216          &3.64503e-02  &2.52  \\
       729          &1.60291e-01     &1.02       &1000         &7.88416e-03  &2.21  \\
      4913          &7.96709e-02     &1.01       &5832         &1.85329e-03  &2.08  \\
     35937          &3.97383e-02     &1.00       &39304        &4.50536e-04  &2.04  \\\hline
  \end{tabular}  
 \caption{Errors and rates wrt $\| \cdot \|_{h,m}$ for Example~\ref{subsubsec:Curvilinearmovingspatialcomputationaldomain2D}
            using degree $p = 1$ and $p=2.$ }
            \label{table:Curvilinear2DI:discrete}
\end{table}

\begin{table}[htb!]
 \centering
  \begin{tabular}{|l|l|l|l|l|l|l|l|} \hline
    \multicolumn{3}{|c|}{p = 3} &\multicolumn{3}{|c|}{p = 4 } \\
     \cline{1-6}   
       Dofs   &$\|u - u_h \|_{h,m}$   &Rate  &Dofs       &$\|u - u_h \|_{h,m}$ &Rate   \\  \hline
         64   &2.41099e-01    &0      &125   &9.93223e-03   &0 \\
        125   &2.65597e-02    &3.18   &216   &7.60959e-03   &0.38 \\
        343   &4.25800e-03    &2.64   &512   &4.70558e-04   &4.02 \\
       1331   &4.58251e-04    &3.21   &1728  &2.72696e-05   &4.11 \\
       6859   &5.46583e-05    &3.06   &8000  &1.60187e-06   &4.09 \\       
      42875   &6.73180e-06    &3.02   &42875 &9.81655e-08   &4.03 \\\hline
  \end{tabular}  
 \caption{Errors and rates wrt $\| \cdot \|_{h,m}$ for Example~\ref{subsubsec:Curvilinearmovingspatialcomputationaldomain2D}
             using degrees $p = 3$ and $p=4.$}
\label{table:Curvilinear2DII:discrete}
\end{table}


\begin{table}[htb!]
 \centering
  \begin{tabular}{|l|l|l|l|l|l|l|l|} \hline
    \multicolumn{3}{|c|}{p = 1} &\multicolumn{3}{|c|}{p = 2 } \\
     \cline{1-6}   
       Dofs   &$\|u - u_h \|_{L_2(Q)}$  &Rate    &Dofs   &$\|u - u_h \|_{L_2(Q)}$ &Rate   \\  \hline
         8            &3.37961e-01   &0          &27           &2.67611e-02    &0     \\
         27           &7.67535e-02   &2.14       &64           &2.66302e-02    &0.01  \\
         125          &1.95993e-02   &1.96       &216          &2.33938e-03    &3.51  \\
         729          &4.86526e-03   &2.01       &1000         &2.63360e-04    &3.15   \\
        4913          &1.25329e-03   &1.95       &5832         &3.20369e-05    &3.04   \\
       35937          &3.61179e-04   &1.79       &39304        &3.97751e-06    &3.00    \\ \hline
  \end{tabular}  
 \caption{$L_2$ errors and rates for Example~\ref{subsubsec:Curvilinearmovingspatialcomputationaldomain2D}
             using degrees $p = 1$ and $p=2.$}
  \label{table:Curvilinear2DI}
\end{table}
\begin{table}[htb!]
 \centering
  \begin{tabular}{|l|l|l|l|l|l|l|l|} \hline
    \multicolumn{3}{|c|}{p = 3} &\multicolumn{3}{|c|}{p = 4 } \\
     \cline{1-6}   
       Dofs   &$\|u - u_h \|_{L_2(Q)}$   &Rate &Dofs         &$\|u - u_h \|_{L_2(Q)}$ &Rate \\  \hline
         64   &2.65683e-01  &0       &125      &6.83584e-04   &0 \\
        125   &2.26323e-03  &3.55    &216      &6.75251e-04   &0.02 \\
        343   &3.36653e-04  &2.74    &512      &3.96777e-05   &4.09 \\
       1331   &1.70973e-05  &4.30    &1728     &1.60770e-06   &4.63 \\
       6859   &9.93566e-07  &4.10    &8000     &5.11255e-08   &4.97 \\
      42875   &6.12843e-08  &4.02    &46656    &1.61035e-09   &4.99 \\  \hline
  \end{tabular}  
 \caption{$L_2$ errors and rates for Example~\ref{subsubsec:Curvilinearmovingspatialcomputationaldomain2D}
             using degrees $p = 3$ and $p=4.$}
\label{table:Curvilinear2DII} 
\end{table}

\subsection{Parallel Solution for the Case $p=1$}
\label{subsec:ParallelSolution}

We here consider exactly the same problem as in 
Subsection~\ref{subsubsec:Fixedspatialcomputationaldomain2d},
i.e., our parabolic model problem is posed in the 3d space-time domain $Q=(0,1)^3$.
For simplicity, we here consider only the case $p=1$ that turns out 
to be the finite element case 
with trilinear hexahedral finite elements. First, we decompose the space-time mesh into several subdomains and assemble the arising linear systems of Subsection~\ref{subsubsec:Fixedspatialcomputationaldomain2d} in parallel. For example, in Figure \ref{fig:SpaceTimeSubdomains}, the space-time decomposition with $64$ subdomains is shown.
 Afterwards, we solve these linear systems also in parallel with the GMRES method, where we apply the AMG library hypre as a preconditioner. 
For the stopping criteria, we use the relative 
 residual
error reduction by $10^{-10}$. In Table \ref{table:ParallelSolving}, we show the iteration numbers and the solving times for different uniform refinement levels, where we increase the number of cores for 
larger
problems. 
We observe that the iteration numbers are slightly increasing, but the performance of this solver is still very good, even if this solver was not constructed for space-time problems. This example was computed on the supercomputer Vulcan BlueGene/Q in Livermore, California U.S.A by using the 
finite element library MFEM.

\begin{table}[htb!]
 \centering
  \begin{tabular}{|r|c|c|c|c|r|} \hline 
       Dofs   &$\|u - u_h \|_{L_2(Q)}$  &Rate & iter   & time [s] & cores  \\  \hline
	8 & 3.65528e-01 & - & 1 & 0.01 & 1 \\
	27 & 9.39008e-02 & 1.961 & 2 & 0.01 & 1 \\
	125 & 2.32674e-02 & 2.013 & 6 & 0.01 & 1 \\
	729 & 5.75635e-03 & 2.015 & 15 & 0.07 & 64 \\
	4 913 & 1.43198e-03 & 2.007 & 16 & 0.14 & 64 \\
	35 937 & 3.57217e-04 & 2.003 & 19 & 0.40 & 64 \\
	274 625 & 8.92171e-05 & 2.001 & 24 & 1.04 & 1 024 \\
	2 146 689 & 2.22941e-05 & 2.001 & 29 & 3.65 & 1 024 \\
	16 974 593 & 5.57231e-06 & 2.000 & 36 & 21.40 & 1 024 \\
	135 005 697 & 1.39293e-06 & 2.000 & 50 & 36.26 & 8 192 \\
	1 076 890 625 & 3.48206e-07 & 2.000 & 63 & 156.50 & 16 384 \\ \hline
  \end{tabular}  
\caption{Solver performance for Example~\ref{subsubsec:Fixedspatialcomputationaldomain2d}  and $p=1$.}
  \label{table:ParallelSolving}
\end{table}

\begin{figure}[htb!]
     \begin{center}
       \includegraphics[width = 0.7\textwidth]{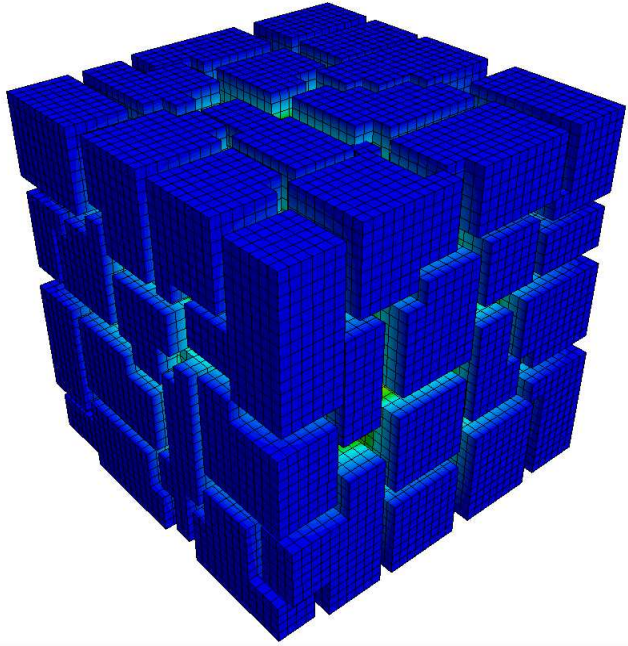}
    \end{center}
    \caption{Space-time decomposition with $64$ subdomains.} 
    \label{fig:SpaceTimeSubdomains}
\end{figure}


\section{Conclusions}
\label{Conclusions}

We have introduced the Space-Time IgA for parabolic evolution problems. 
We have presented a-priori error
estimates and numerical examples in the space-time domain 
$Q = \{(x,t) \in \mathbb{R}^{d+1}: x \in \Omega(t), t \in (0,T)\}$ 
for both fixed spatial domains $\Omega \subset \mathbb{R}^{d}$ 
and moving spatial domains $\Omega(t) \subset \mathbb{R}^{d}$, $t \in [0,T]$. 
Our numerical experiments have been preformed 
on a sequence of refined meshes and for the polynomial degrees $p=1,2,3,4$ 
in the cases $d=1$ as well as $d=2$. 
In the case of smooth solutions, we have nicely observed 
the full asymptotical convergence rates. 
For simplicity, we restricted our-self to the single-patch case.
However, it is 
possible
to generalize the results to the conform 
multi-patch case.
Moreover, the combination of the results of this paper 
with the results of the papers \cite{LMN:LangerToulopoulos:2014a} 
and \cite{LMN:Neumueller:2013a}
allows us to analyze space-time multi-patch dG  IgA schemes
in a similar way. 
The overall efficiency of the space-time IgA heavily depends on 
the availability of fast parallel solvers. 
At the first glance, the solution of one large space-time system of 
linear algebraic equations instead of many smaller systems 
in traditional time-stepping methods seems to be a big disadvantage
of space-time IgA, but on parallel computers with many cores  
this is a big advantage that allows us to overcome 
the curse of sequentiality as Example~\ref{subsec:ParallelSolution} shows. 
Another advantage consists in the elegant treatment of moving 
domains or interfaces. And last, but not least the 
space-time adaptivity or, more precisely, the possibility to 
perform a free adaptivity in $Q$ without separating the time from the
space ($t$ is just another variable $x_{d+1}$) 
opens new horizons in developing highly efficient parallel 
adaptive space-time IgA methods for parabolic as well as 
hyperbolic problems, using T-splines
\cite{LMN:ScottSimpsonEvansLiptonBordasHughesSederberg:2013a}
or THB-splines 
\cite{LMN:GiannelliJuettlerSpeleers:2012a}
for local refinement.

\section*{Acknowledgment}
The research is supported by the Austrian Science Fund (FWF) through the
NFN S117-03 project. 
We also want to thank P. Vassilevski for the possibility to compute on the Vulcan Cluster in Livermore. 
Especially M. Neum\"{u}ller wants to thank him for the support during his one month 
visit at the Lawrence Livermore National Laboratory.

\bibliographystyle{plain} 
\bibliography{igaspacetime}

\end{document}